\tikzset{>=latex}
\newtheorem{theorem}{Theorem}[section]
\newtheorem{lemma}[theorem]{Lemma}
\theoremstyle{definition}
\theoremstyle{remark}
\newtheorem{remark}[theorem]{Remark}
\numberwithin{equation}{section}
\newcommand{\elem}{\ensuremath{T}}
\newcommand{\mesh}{\ensuremath{\mathcal T}}
\newcommand{\edge}{\ensuremath{\mathcal E}}
\newcommand{\vertex}{\ensuremath{\vec a}}
\newcommand{\transposed}{\ensuremath{\dagger}}
\newcommand{\faceSet}{\ensuremath{\mathcal F}}
\newcommand{\faceSetDir}{\ensuremath{\mathcal F^\textup D}}
\newcommand{\face}{\ensuremath{F}}
\newcommand{\skeleton}{\ensuremath{\Sigma}}
\newcommand{\skeletalSpace}{\ensuremath{M}}
\newcommand{\contElementSpace}{\ensuremath{V^\textup c}}
\newcommand{\linElementSpace}{\ensuremath{\overline V^\textup c}}
\newcommand{\discElementSpace}{\ensuremath{V}}
\newcommand{\polynomials}{\ensuremath{\mathcal P}}
\newcommand{\level}{\ensuremath{\ell}}
\newcommand{\iterMgInner}{m}%\ensuremath{\eta}}
\newcommand{\Div}{\nabla\!\cdot\!}
\newcommand{\avgOp}{\ensuremath{I^\textup{avg}}}
\newcommand{\linearInterpolation}{\overline{I}}
\newcommand{\injectionOp}{\ensuremath{I}}
\newcommand{\projectionOp}{\ensuremath{P}}
\newcommand{\contLinProj}{\ensuremath{\overline \Pi^\textup c}}
\newcommand{\liftingOp}{\ensuremath{S}}
\renewcommand{\vec}[1]{\ensuremath{\boldsymbol{#1}}}
\newcommand{\Nu}{\ensuremath{\vec \nu}}
\newcommand{\dx}{\ensuremath{\, \textup d x}}
\newcommand{\ds}{\ensuremath{\, \textup d \sigma}}
\newcommand{\avg}[1]{\{\!\{ #1 \}\!\}}
\newcommand{\llangle}{\ensuremath{\langle \! \langle}}
\newcommand{\rrangle}{\ensuremath{\rangle \! \rangle}}
\newcommand{\nnorm}{\ensuremath{\vert \! \vert \! \vert}}
\newcommand{\gradU}{\ensuremath{L}}
\newcommand{\IR}{\ensuremath{\mathbb R}}
\newcommand{\IN}{\ensuremath{\mathbb N}}
\newcommand{\timeStep}{{\ensuremath{\Delta t}}}
\newcommand{\stab}{\ensuremath{S}}
\newcommand{\tol}{\ensuremath{\epsilon_\textup{tol}}}
\newcommand\footnoteref[1]{\protected@xdef\@thefnmark{\ref{#1}}\@footnotemark}
\begin{document}

\title[Multigrid for Stokes HDG]{Homogeneous multigrid method for HDG applied to the Stokes equation} 

\author{Peipei Lu}
\address{Department of Mathematics Sciences, Soochow University, Suzhou, 215006, China}
\email{pplu@suda.edu.cn}
% \thanks{}

\author{Wei Wang}
\address{School of Mathematics, University of Minnesota, 206 Church St SE, Minneapolis, MN, 55455, USA}
\email{wang9585@umn.edu}
% \thanks{}

\author{Guido Kanschat}
\address{Interdisciplinary Center for Scientific Computing (IWR), Heidelberg University, Mathematikon, Im Neuenheimer Feld 205, 69120 Heidelberg, Germany}
\email{kanschat@uni-heidelberg.de}
% \thanks{}

\author{Andreas Rupp}
\address{School of Engineering Science, Lappeenranta--Lahti University of Technology, P.O. Box 20, 53851 Lappeenranta, Finland}
\email{andreas.rupp@fau.de}
% \thanks{}

\subjclass[2010]{65F10, 65N30, 65N50}

\begin{abstract}
 We propose a multigrid method to solve the linear system of equations arising from a hybrid discontinuous Galerkin (in particular, a single face hybridizable, a hybrid Raviart--Thomas, or a hybrid Brezzi--Douglas--Marini) discretization of a Stokes problem. Our analysis is centered around the augmented Lagrangian approach and we prove uniform convergence in this setting. Numerical experiments underline our analytical findings.
 \\[1ex] \noindent \textsc{Keywords.}
 Augmented Lagrangian approach, hybrid discontinuous Galerkin, multigrid method, Stokes equation.
\end{abstract}

\date{\today}
\maketitle
\section{Introduction}
Hybrid discontinuous Galerkin (HDG) methods have been a very active field of research in the last years, and they have been applied to many different partial differential equations (PDEs). For Stokes problems, one advantage of HDG schemes is that they may provide exactly divergence-free velocities especially for the three dimensional case through postprocessing. Cockburn et al. proposed and analysed several HDG methods for the Stokes Flow. The first HDG method for the Stokes equation was introduced in \cite{CockburnG09} for the velocity\--pressure\--vorticity formulation. Another two HDG methods are based on the velocity-pressure-gradient formulation \cite{NguyenPC10} and the velocity-pressure-stress formulation \cite{CockburnNP10} of the Stokes system respectively. In \cite{CockburnNP10} it is shown that all of the above HDG methods share the following common features: First, when all the components of the approximate solution are discretized by the polynomials of degree $k$, convergence with the optimal order of $k+1$ in $L^2$ for any $k \geq 0 $ is achieved. Second, a globally divergence-free velocity can be computed by using an element-by-element postprocessing of the approximation velocity. A pressure-robust HDG method was introduced in \cite{RhebergenW17} for the Stokes equations, which is also locally mass conserving, momentum conserving and energy stable. The generalization to the Navier–Stokes equations can be found in  \cite{RhebergenW18}. Lehrenfeld presented a symmertric interior penalty, divergence-conforming HDG method with projected jumps in \cite{LehrenfeldPhD}.
 
However, constructing multigrid methods for HDG discretizations has been a major issue, since many intuitive choices for injection operators have turned out to be instable, even for the Poisson equation \cite{TanPhD}. HDG multigrid methods for the diffusion equation have been devised by first transferring the HDG approximation to a continuous finite element approximation and performing the multigrid method for this method \cite{CockburnDGT2013}. Later, Lu, Rupp, and Kanschat proposed a multigrid framework for the Poisson equation that does not internally change the approximation scheme and called this approach `homogeneous multigrid', see \cite{LuRK21,LuRK21a,LuRK22b}. For the Stokes equations, the design of robust solver for condensed HDG discretizations is more challenging. A multigrid method for such a task has been proposed for lowest order HDG and `projected jumps' in \cite{FuK22}, which exploits an an equivalence between such a HDG scheme and Crouzeix--Raviart finite elements. Afterwards, \cite{FuK22} uses the Crouzeix--Raviart mutigrid theory established by Brenner, see e.g. \cite{Brenner04Vcycle}. The authors also proposed a uniform block-diagonal preconditioner for condensed, $H(\operatorname{div})$ conforming HDG schemes applied parameter-dependent saddle point problems, including the generalized Stokes equations and the linear elasticity equations in \cite{FuKw22}. A new preconditioner was introduced in \cite{RhebergenW22}
for a recently developed pressure-robust HDG scheme \cite{RhebergenW17}, by showing the Schur complement of the statically condensed system is spectrally equivalent to a simple trace pressure mass matrix, it is proven to be optimal.

In this paper, we transfer the results in \cite{LuRK21,LuRK21a,LuRK22b} to a Stokes problem, which may be discretized by one of the following HDG methods: the single face hybridizable (SFH), the hybrid Raviart--Thomas (RT-H), or the hybrid Brezzi--Douglas--Marini (BDM-H) method. Combing the augmented Lagrangian method, the globally coupled unknowns can be reduced to the numerical trace of the velocity only. We focus on the multigrid method for the corresponding HDG system. Using the injection operators proposed in \cite{LuRK21}, we prove the uniform convergence of the V-cycle multigrid method for the augmented Lagrangian approach.

The remainder of this manuscript is structured as follows: In Section \ref{SEC:method}, we describe the problem and its HDG discretization. Afterwards, in Section \ref{SEC:connections}, we discuss the relations between the approximate solutions of the RT-H, the BDM-H, and  the SFH methods. Next, we analyze the local solvers in some details and construct an auxiliary problem, for which we prove an error estimate. The last steps of our analysis contain an investigation of injection operators in Section \ref{SEC:injection_op} and the illustration of convergence results for our multigrid scheme. A section on numerical results and short conlcusions wrap up our manuscript.
% 
% --------------------------------------------------------------------------------------------------
\section{Description of the used method}\label{SEC:method}
% --------------------------------------------------------------------------------------------------
% 
We consider the velocity--pressure--gradient formulation of the Stokes equation in a bounded, Lipschitz domain $\Omega \subset \IR^3$. According to \cite{CockburnNP10}, this formulation is beneficial for HDG methods, since if based on this formulation, HDG methods provide the qualitatively best approximations to the solution of the Stokes equation as compared to other methods of similar computational complexity:
\begin{subequations}\label{EQ:stokes_mixed}\begin{align}
 \gradU - \nabla u & = 0 && \text{ in } \Omega, \\
 - \Div \gradU + \nabla p & = f && \text{ in } \Omega, \\
 \Div u & = 0 && \text{ in } \Omega, \\
 u & = 0 && \text{ on } \partial \Omega,
\end{align}\end{subequations}
where the Dirichlet boundary condition $g$ is set to be zero for the ease of representation. Note that this choice suffices $\int_{\partial \Omega} g \cdot \Nu \ds = 0$.

Convergence analysis of HDG methods based on \eqref{EQ:stokes_mixed} is presented in \cite{CockburnGNPS11}. This analysis also includes the single-face hybridizable (SFH) method. It shows that pressure, velocity, and the gradient of the velocity converge with the optimal orders $k+1$ (where $k$ denotes the order of the used local polynomial spaces) with respect to the $L^2$ norms. Moreover, using an element-by-element postprocessing scheme, an improved velocity approximation can be obtained. This approximation is pointwise divergence-free, $H(\operatorname{div})$ conforming, and converges with an order of $k+2$ if $k \ge 1$.

One of the main advantages of HDG over the DG method is that it reduces the globally coupled unknowns to the numerical trace of the velocity and the mean of the pressure on element faces \cite{NguyenPC10}. This
leads to a significant reduction in the size of the resulting global matrix. Moreover, by using the augmented Lagrangian method, the globally coupled unknowns are further reduced to the numerical trace of the velocity only.
% 
%%%%%%%%%%%%%%%%%%%%%%%%%%%%%%%%%%%%%%%
\subsection{Augmented Lagrangian approach}
%%%%%%%%%%%%%%%%%%%%%%%%%%%%%%%%%%%%%%%
% 
An augmented Lagrangian method is based on an evolution problem whose limit, for the time going to infinity, is the solution of the original problem. That is, for $t > 0$
\begin{subequations}\label{EQ:stokes_parab}\begin{align}
 \gradU(t) - \nabla u(t) & = 0 && \text{ in } \Omega, \\
 - \Div \gradU(t) + \nabla p(t) & = f && \text{ in } \Omega, \\
 \partial_t p(t) + \Div u(t) & = 0 && \text{ in } \Omega, \\
 u(t) & = 0 && \text{ on } \partial \Omega, \\
 p(0) & = p_0 && \text{ in } \Omega.
\end{align}\end{subequations}

The augmented approach consists of an iterative method, which is obtained using backward Euler in time and HDG in space.
% 
%%%%%%%%%%%%%%%%%%%%%%%%%%%%%%%%%%%%%%%
\subsection{Preliminaries and notation}
%%%%%%%%%%%%%%%%%%%%%%%%%%%%%%%%%%%%%%%
% 
To this end, we start with a successively refined series of simplicial meshes $\mesh_\level$, $\level \in \IN$. We assume all of its meshes to be regular (i.e., no elements are anisotropic or otherwise disorted), geometrically conforming (i.e., each face of a cell is either the face of one other cell or part of the boundary), and assume that the refinement is not too fast, such that there is a constant $c_\textup{ref} > 0$ such that
\begin{equation}
 h_\level \ge c_\textup{ref} h_{\level-1}.
\end{equation}
The set of faces of $\mesh_\level$ is denoted by $\faceSet_\level$. The skeleton of $\mesh_\level$ is the union of all faces $\skeleton_\level := \bigcup_{\face \in \faceSet_\level} \face$. We define the space of piecewise polynomials of degree at most $p$ on the skeleton as
\begin{equation}
 \skeletalSpace_\level := \left\{ \lambda \in [L^2 (\skeleton_\level)]^3 \;\middle|\;
 \begin{array}{r@{\,}c@{\,}ll}
  \lambda_{|\face} &\in& [\polynomials_p (\face)]^3 & \forall \face \in \faceSet_\level\\
  \lambda_{|\face} &=& 0 & \forall \face \in \faceSetDir_\level    
 \end{array}\right\}.
\end{equation}
Moreover, we denote by $W_\level$ the approximation space of $\gradU(t)$, by $V_\level$ the approximation space of $u(t)$, and by $Q_\level$ the approximation space of $p(t)$, respectively.

Beyond this, we define a scalar product and a norm for square integrable function in the bulk domain $\Omega$ via
\begin{equation}
 (u, v) := \int_\Omega u v \dx \qquad \text{ and } \qquad \| u \|^2_0 := \sqrt{(u,u)}, 
\end{equation}
for all $u, v \in L^2(\Omega)$. If the respective functions $u$ and $v$ are assumed to be vector-valued or matrix-valued, the used product needs to be adapted accordingly. On the skeleton, we consider two different scalar products with associated norms: The first one is defined as
\begin{equation}
 \llangle \lambda, \mu \rrangle_\level = \sum_{\elem \in \mesh_\level} \int_{\partial \elem} \lambda \cdot \mu\ds,
\end{equation}
for $\lambda, \, \mu \in [L^2(\skeleton_\level)]^3$. Note that its value grows if the mesh is refined, and that interior faces appear twice in this definition such that expressions like $\llangle u, \mu \rrangle_\level$ with possibly discontinuous $u|_{\elem} \in [H^1(\elem)]^3$ for all $\elem \in \mesh_\level$ are defined without further ado. Its induced norm is denoted as $\nnorm \cdot \nnorm_\level$.

The second one is commensurate with the $L^2$-inner product in the bulk domain, namely
\begin{equation}
 \langle \lambda, \mu \rangle_\level = \sum_{\elem \in \mesh_\level} \frac{|\elem|}{|\partial \elem|} \int_{\partial \elem} \lambda \cdot \mu \ds \cong \sum_{\face \in \faceSet_\level} h_\face  \int_{\face} \lambda \cdot \mu \ds.
\end{equation}
Its induced norm is $ \| \mu \|^2_\level = \langle \mu, \mu \rangle_\level$, which can naturally be extended to functions that have discontinuities between two mesh cells.

Assuming that $p_0$ is an initial (continuous) guess for the pressure, it is projected to its respective discrete space using the $L^2$ projection, which is characterized via
\begin{equation}
 (p^0_\level, q) = (p_0,q) \qquad \text{ for all } q \in Q_\level.
\end{equation}
% 
%%%%%%%%%%%%%%%%%%%%%%%%%%%%%%%%%%%%%%%
\subsection{Local solvers}
%%%%%%%%%%%%%%%%%%%%%%%%%%%%%%%%%%%%%%%
% 
Next, given a constant time step $\timeStep$ and a pressure $p^{n-1}_\level$ we define the iterate $(\gradU^n_\level, u^n_\level, p^n_\level, \lambda^n_\level) \in W_\level \times V_\level \times Q_\level \times \skeletalSpace_\level$ as an approximation to $\gradU(n \timeStep)$, $u(n \timeStep)$, $p(n \timeStep)$, and $u(n \timeStep)$, respectively, by
\begin{subequations}\label{EQ:scheme}\begin{align}
 (\gradU^n_\level, G) + (u^n_\level, \Div G) - \llangle \lambda^n_\level, G  \Nu \rrangle_\level & = 0, \\
 (- \Div \gradU^n_\level + \nabla p^n_\level, v) + \llangle \stab (u^n_\level - \lambda^n_\level), v \rrangle_\level & = (f, v), \\
 \tfrac{1}{\timeStep} (p^n_\level, q) - (u^n_\level, \nabla q) + \llangle \lambda^n_\level \cdot \Nu, q \rrangle_\level & = \tfrac{1}{\timeStep} (p^{n-1}_\level, q), \\
 \llangle - \hat{\gradU}^n_\level \Nu + \hat p^n_\level \Nu, \mu \rrangle_\level & = 0
\end{align}\end{subequations}
for all $(G, v, q, \mu) \in W_\level \times V_\level \times Q_\level \times \skeletalSpace_\level$. Here, the numerical flux is defined as
\begin{equation}
 \hat{\gradU}^n_\level \Nu + \hat p^n_\level \Nu = \gradU^n_\level \Nu + p^n_\level \Nu + \stab (u^n_\level - \lambda^n_\level).
\end{equation}

In practice, the iteration can be stopped, when the relative error of the pressure is less than a prescribed tolerance $\tol$, see \cite{NguyenPC10}. That is, we stop at $n = n_\textup{iter}$ if
\begin{equation}\label{EQ:sol_accuracy}
 \frac{ \| p^{n_\textup{iter}}_\level - p^{n_\textup{iter} -1 }_\level \|_0 }{ \| p^{n_\textup{iter}}_\level \|_0 } < \tol.
\end{equation}

The numerical results of \cite{NguyenPC10} show that $n_\textup{iter}$ is independent of the mesh size/level and independent of the polynomial degrees of the test and trial spaces. Hence, the augmented Lagrangian approach appears to be attractive for solving the HDG discretization of the Sokes equation.

Next, we define the operators mapping $\lambda$ to the respective solutions of the element--local problem
\begin{equation}
 \skeletalSpace_\level \ni \lambda \mapsto (\gradU^\timeStep_\level \lambda, u^\timeStep_\level \lambda, p^\timeStep_\level \lambda) \in W_\level \times V_\level \times Q_\level
\end{equation}
by claiming that
\begin{subequations}\label{EQ:local_sol}\begin{align}
 (\gradU^\timeStep_\level \lambda, G)_\elem + (u^\timeStep_\level \lambda, \Div G)_\elem & = \llangle \lambda, G \Nu \rrangle_{\partial \elem}, \label{EQ:local_sol_gradu} \\
 (- \Div \gradU^\timeStep_\level \lambda + \nabla p^\timeStep_\level \lambda, v)_\elem + \llangle \stab (u^\timeStep_\level \lambda - \lambda), v \rrangle_{\partial \elem} & = 0, \label{EQ:local_sol_rhs_zero}\\
 \tfrac{1}{\timeStep} (p^\timeStep_\level \lambda, q)_\elem - (u^\timeStep_\level \lambda, \nabla q)_\elem & = - \llangle \lambda \cdot \Nu, q \rrangle_{\partial \elem}\label{EQ:local_sol_bal}
\end{align}\end{subequations}
holds for all $(G, v, q) \in W_\level \times V_\level \times Q_\level$. In the same way, we can define element--local solution operators for the right hand side
\begin{equation}
 L^2(\Omega) \ni f \mapsto (\gradU^\timeStep_\level f, u^\timeStep_\level f, p^\timeStep_\level f) \in W_\level \times V_\level \times Q_\level
\end{equation}
via the assumption that
\begin{subequations}\label{EQ:local_sol_f}\begin{align}
 (\gradU^\timeStep_\level f, G)_\elem + (u^\timeStep_\level f, \Div G)_\elem & = 0, \label{EQ:local_sol_f_gradu}\\
 (- \Div \gradU^\timeStep_\level f + \nabla p^\timeStep_\level f, v)_\elem + \llangle \stab u^\timeStep_\level f, v \rrangle_{\partial \elem} & = (f, v), \\
 \tfrac{1}{\timeStep} (p^\timeStep_\level f, q)_\elem - (u^\timeStep_\level f, \nabla q)_\elem & = 0
\end{align}\end{subequations}
need to hold for all $(G, v, q) \in W_\level \times V_\level \times Q_\level$. Beyond that, we can evolve the solution components, if we know the pressure of the previous time step---which is therefore denoted by $m$---inducing the mapping
\begin{equation}
 Q_\level \ni m \mapsto (\gradU^\timeStep_\level m, u^\timeStep_\level m, p^\timeStep_\level m) \in W_\level \times V_\level \times Q_\level
\end{equation}
which are defined such that
\begin{subequations}\label{EQ:local_sol_m}\begin{align}
 (\gradU^\timeStep_\level m, G)_\elem + (u^\timeStep_\level m, \Div G)_\elem & = 0, \\
 (- \Div \gradU^\timeStep_\level m + \nabla p^\timeStep_\level m, v)_\elem + \llangle \stab u^\timeStep_\level m, v \rrangle_{\partial \elem} & = 0, \\
 \tfrac{1}{\timeStep} (p^\timeStep_\level m, q)_\elem - (u^\timeStep_\level m, \nabla q)_\elem & = \tfrac{1}{\timeStep} (m, q)_\elem
\end{align}\end{subequations}
holds for all $(G, v, q) \in W_\level \times V_\level \times Q_\level$.
% 
%%%%%%%%%%%%%%%%%%%%%%%%%%%%%%%%%%%%%%%
\subsection{General remarks}
%%%%%%%%%%%%%%%%%%%%%%%%%%%%%%%%%%%%%%%
% 
The $\lambda^n_\level$ in \eqref{EQ:scheme} admit the global equality, cf.\ \cite[Thm.\ 3.1]{NguyenPC10},
\begin{equation}\label{EQ:hdg_condensed}
 a^\timeStep_\level(\lambda^n_\level, \mu) = (f, u^\timeStep_\level \mu) - \tfrac{1}{\timeStep} (p^{n-1}_\level, p^\timeStep_\level \mu) \qquad \forall \mu \in \skeletalSpace_\level
\end{equation}
where
\begin{equation}\label{EQ:bilinear}
 a^\timeStep_\level(\lambda, \mu) = (\gradU^\timeStep_\level \mu, \gradU^\timeStep_\level \mu) + \llangle \stab (u^\timeStep_\level \lambda - \lambda), (u^\timeStep_\level \mu - \mu ) \rrangle_\level + \tfrac{1}{\timeStep} (p^\timeStep_\level \lambda, p^\timeStep_\level \mu).
\end{equation}

In this paper, we focus on the multigrid method for \eqref{EQ:hdg_condensed} and restrict ourselves to considering the SFH method. That is,
\begin{equation}
 \stab = \tau_\level \begin{pmatrix} 1 & 0 & 0 \\ 0 & 1 & 0 \\ 0 & 0 & 1 \end{pmatrix}, \quad \text{ with } \quad \tau_\level = \begin{cases} \tau^\star_\level & \text{ on } \face^\star_\elem \\ 0 & \text{ on } \partial \elem \setminus \face^\star_\elem, \end{cases}
\end{equation}
where $\face^\star_\elem$ is an arbitrary face of $\elem$. However, our analysis also covers the RT-H and BDM-H methods. The following list contains the respective choices of approximation spaces and stabilization parameters to obtain the SFH, RT-H, and BDM-H methods:
\begin{itemize}[leftmargin=*]
 \item SFH: $W_\elem = [\polynomials_p(\elem)]^{3 \times 3}$, $V_\elem = [\polynomials_p(\elem)]^3$, $Q_\elem = \polynomials_p(\elem)$, $\tau^\star_\level > 0$,
 \item RT-H: $W_\elem = [\text{Raviart--Thomas space of degree }p]^3$, $V_\elem = [\polynomials_p(\elem)]^3$, $Q_\elem = \polynomials_p(\elem)$, $\tau^\star_\level = 0$,
 \item BDM-H: $W_\elem = [\polynomials_p(\elem)]^{3 \times 3}$, $V_\elem = [\polynomials_{p-1}(\elem)]^3$, $Q_\elem = \polynomials_p(\elem)$, $\tau^\star_\level = 0$.
\end{itemize}
 
Notably, bilinear form $a^\timeStep_\level$ induces the norm $\| \cdot \|_{a_\level}$.
% --------------------------------------------------------------------------------------------------
\section{Relations among RT-H, BDM-H, and SFH}\label{SEC:connections}
% --------------------------------------------------------------------------------------------------
% 
Let us define $\tilde u^\timeStep_\level \lambda \in [\polynomials_p(\elem)]^3$ by
\begin{subequations}\label{EQ:const_tilde_u}\begin{align}
 \tilde u^\timeStep_\level \lambda & = \lambda && \text{ on } \face^\star_\level, \label{EQ:const_tilde_u_face} \\
 (\tilde u^\timeStep_\level \lambda, v)_\elem & = ( u^\timeStep_{\level,\textup{BDM}} \lambda, v)_\elem && \text{ for all } v \in [\polynomials_{p-1}(\elem)]^3. \label{EQ:const_tilde_u_elem}
\end{align}\end{subequations}

We will show that $(\gradU^\timeStep_{\level,\textup{BDM}} \lambda, \tilde u^\timeStep_\level \lambda, p^\timeStep_{\level,\textup{BDM}} \lambda)$ is the solution of \eqref{EQ:local_sol} for SFH in the following.
\begin{lemma}\label{LEM:sfh_bdm_loc}
 For all $\lambda \in \skeletalSpace_\level$, we have that
 \begin{align*}
  \gradU^\timeStep_{\level,\textup{BDM}} \lambda & = \gradU^\timeStep_{\level,\textup{SFH}} \lambda, &
  p^\timeStep_{\level,\textup{BDM}} \lambda & = p^\timeStep_{\level,\textup{SFH}} \lambda, \\
  u^\timeStep_{\level,\textup{SFH}} \lambda & = \lambda \text{ on } \face^\star_\elem, &
  u^\timeStep_{\level,\textup{BDM}} \lambda & = \Pi_{\level,p-1} u^\timeStep_{\level,\textup{SFH}} \lambda,
 \end{align*}
 where $\Pi_{\level,p-1}$ is the element-wise $L^2$ projection to $[\polynomials_{p-1}(\elem)]^3$.
\end{lemma}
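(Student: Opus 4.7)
The plan is to show that the triple $(\gradU^\timeStep_{\level,\textup{BDM}} \lambda, \tilde u^\timeStep_\level \lambda, p^\timeStep_{\level,\textup{BDM}} \lambda)$ satisfies the SFH local problem \eqref{EQ:local_sol}. Once this is done, the well-posedness of the SFH local solver immediately identifies it with $(\gradU^\timeStep_{\level,\textup{SFH}} \lambda, u^\timeStep_{\level,\textup{SFH}} \lambda, p^\timeStep_{\level,\textup{SFH}} \lambda)$, and all four claims of the lemma fall out by inspecting $\tilde u^\timeStep_\level \lambda$: the boundary relation $u^\timeStep_{\level,\textup{SFH}} \lambda = \lambda$ on $\face^\star_\elem$ is exactly \eqref{EQ:const_tilde_u_face}, and applying $\Pi_{\level,p-1}$ to $u^\timeStep_{\level,\textup{SFH}} \lambda = \tilde u^\timeStep_\level \lambda$ reproduces $u^\timeStep_{\level,\textup{BDM}} \lambda$ by \eqref{EQ:const_tilde_u_elem}, since $u^\timeStep_{\level,\textup{BDM}} \lambda \in [\polynomials_{p-1}(\elem)]^3$ already.

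Before the substitution, I would confirm that \eqref{EQ:const_tilde_u} does define a unique element $\tilde u^\timeStep_\level \lambda \in [\polynomials_p(\elem)]^3$. This reduces to showing that the map $w \mapsto (w|_{\face^\star_\elem}, \Pi_{\level,p-1} w)$ is a bijection from $[\polynomials_p(\elem)]^3$ onto $[\polynomials_p(\face^\star_\elem)]^3 \times [\polynomials_{p-1}(\elem)]^3$. The dimensions match by Pascal's identity $\binom{p+3}{3} = \binom{p+2}{3} + \binom{p+2}{2}$, and injectivity follows by writing any $w$ that vanishes on $\face^\star_\elem$ as $w = \phi_\star \tilde w$ with $\phi_\star$ the barycentric coordinate opposite $\face^\star_\elem$ (hence $\phi_\star > 0$ inside $\elem$) and $\tilde w \in [\polynomials_{p-1}(\elem)]^3$; testing $\Pi_{\level,p-1} w = 0$ against $\tilde w$ then forces $\tilde w = 0$.

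The key observation for verifying \eqref{EQ:local_sol_gradu}--\eqref{EQ:local_sol_bal} is that $W_\elem$ is identical for SFH and BDM-H (both equal to $[\polynomials_p(\elem)]^{3\times 3}$), so $\Div G \in [\polynomials_{p-1}(\elem)]^3$ for every admissible $G$, and likewise $\nabla q \in [\polynomials_{p-1}(\elem)]^3$ for every $q \in Q_\elem$. Consequently \eqref{EQ:const_tilde_u_elem} gives $(\tilde u^\timeStep_\level \lambda, \Div G)_\elem = (u^\timeStep_{\level,\textup{BDM}} \lambda, \Div G)_\elem$ and $(\tilde u^\timeStep_\level \lambda, \nabla q)_\elem = (u^\timeStep_{\level,\textup{BDM}} \lambda, \nabla q)_\elem$, and the SFH equations \eqref{EQ:local_sol_gradu} and \eqref{EQ:local_sol_bal} reduce verbatim to their BDM-H counterparts. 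For \eqref{EQ:local_sol_rhs_zero} with a test $v \in [\polynomials_p(\elem)]^3$, I would decompose $v = \Pi_{\level,p-1} v + (v - \Pi_{\level,p-1} v)$: the orthogonal complement pairs trivially with $-\Div \gradU^\timeStep_{\level,\textup{BDM}} \lambda + \nabla p^\timeStep_{\level,\textup{BDM}} \lambda \in [\polynomials_{p-1}(\elem)]^3$, while the projected part lies in the BDM-H test space so that the BDM-H version (with $\stab = 0$) applies. The SFH stabilization $\llangle \stab(\tilde u^\timeStep_\level \lambda - \lambda), v \rrangle_{\partial \elem}$ then vanishes because $\stab$ is supported on $\face^\star_\elem$, where $\tilde u^\timeStep_\level \lambda - \lambda = 0$ by \eqref{EQ:const_tilde_u_face}.

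The main obstacle I foresee is not in the equation-by-equation check, which is essentially bookkeeping, but in the implicit appeal to uniqueness of the SFH local solver that turns a constructed solution into the solution. I would address it by testing the homogeneous version of \eqref{EQ:local_sol} against its own solution to obtain a nonnegative energy expression in which the stabilization term $\tau^\star_\level > 0$ on $\face^\star_\elem$ controls $u^\timeStep_{\level,\textup{SFH}} \lambda - \lambda$ on that face; combined with the first equation this forces $\gradU$, $p$, and $u$ to vanish. After that, every assertion of the lemma follows from the three algebraic identifications discussed above.
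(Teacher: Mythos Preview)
Your proposal is correct and follows essentially the same route as the paper: construct $\tilde u^\timeStep_\level \lambda$ via \eqref{EQ:const_tilde_u}, verify that the BDM triple with $\tilde u^\timeStep_\level \lambda$ in place of $u^\timeStep_{\level,\textup{BDM}}\lambda$ solves the SFH local system, and invoke uniqueness. The only notable difference is that for \eqref{EQ:local_sol_rhs_zero} the paper argues slightly more directly---it observes that $-\Div \gradU^\timeStep_{\level,\textup{BDM}}\lambda + \nabla p^\timeStep_{\level,\textup{BDM}}\lambda \in [\polynomials_{p-1}(\elem)]^3$ is orthogonal to its own space and hence vanishes pointwise---whereas your decomposition $v = \Pi_{\level,p-1}v + (v-\Pi_{\level,p-1}v)$ reaches the same conclusion; your additional checks on the well-definedness of $\tilde u^\timeStep_\level\lambda$ and on SFH uniqueness are welcome rigor that the paper leaves implicit.
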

\begin{proof}
 Using \eqref{EQ:const_tilde_u_elem} and \eqref{EQ:local_sol_gradu} we obtain for any $G \in [\polynomials_p(\elem)]^{3 \times 3}$ that
 \begin{equation}
  (\gradU^\timeStep_{\level,\textup{BDM}} \lambda, G)_\elem + (\tilde u^\timeStep_\level \lambda, \Div G)_\elem = \llangle \lambda, G  \Nu \rrangle_{\partial \elem},
 \end{equation}
 and from \eqref{EQ:local_sol_rhs_zero}, we know that
 \begin{equation}
  (- \Div \gradU^\timeStep_{\level, \textup{BDM}} \lambda + \nabla p^\timeStep_{\level, \textup{BDM}} \lambda, v)_\elem = 0 \qquad \forall v \in [\polynomials_{p-1}(\elem)]^3.
 \end{equation}
 Since $- \Div \gradU^\timeStep_{\level, \textup{BDM}} \lambda + \nabla p^\timeStep_{\level, \textup{BDM}} \lambda \in  [\polynomials_{p-1}(\elem)]^3$, we can deduce that $- \Div \gradU^\timeStep_{\level, \textup{BDM}} \lambda + \nabla p^\timeStep_{\level, \textup{BDM}} \lambda = 0$, which combined with \eqref{EQ:const_tilde_u_face} results in the observation that
 \begin{equation}
  (- \Div \gradU^\timeStep_{\level, \textup{BDM}} \lambda + \nabla p^\timeStep_{\level, \textup{BDM}} \lambda, v) + \llangle \stab (\tilde u^\timeStep_\level \lambda - \lambda), v  \rrangle_{\partial \elem} = 0 \quad \forall v \in [\polynomials_p(\elem)]^3.
 \end{equation}
 Finally, using \eqref{EQ:local_sol_bal} and \eqref{EQ:const_tilde_u_elem}, we obtain
 \begin{equation}
  \tfrac{1}{\timeStep} (p^\timeStep_{\level,\textup{BDM}} \lambda, q)_\elem - (\tilde u^\timeStep_\level \lambda, \nabla q)_\elem = - \llangle \lambda \cdot \Nu, q \rrangle_{\partial \elem} \qquad \forall q \in \polynomials_p(\elem),
 \end{equation}
 which indicates that $(\gradU^\timeStep_{\level,\textup{BDM}} \lambda, \tilde u^\timeStep_\level \lambda, p^\timeStep_{\level,\textup{BDM}} \lambda)$ is the solution of \eqref{EQ:local_sol} for SFH and hence Lemma \ref{LEM:sfh_bdm_loc} holds
\end{proof}
\begin{lemma}\label{LEM:rt_bdm_loc}
 For all $\lambda \in \skeletalSpace_\level$, we have that
 \begin{gather*}
  \gradU^\timeStep_{\level,\textup{BDM}} \lambda = \gradU^\timeStep_{\level,\textup{RT}} \lambda, \qquad \qquad
  p^\timeStep_{\level,\textup{BDM}} \lambda = p^\timeStep_{\level,\textup{RT}} \lambda, \\
  u^\timeStep_{\level,\textup{BDM}} \lambda = \Pi_{\level,p-1} u^\timeStep_{\level,\textup{RT}} \lambda.
 \end{gather*}

\end{lemma}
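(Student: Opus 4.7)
The plan is to mirror Lemma \ref{LEM:sfh_bdm_loc} but run in the opposite direction: starting from the RT-H local triple $(\gradU^\timeStep_{\level,\textup{RT}} \lambda, u^\timeStep_{\level,\textup{RT}} \lambda, p^\timeStep_{\level,\textup{RT}} \lambda)$, I would construct a candidate triple in the BDM-H spaces, verify that it solves the element-local BDM-H problem \eqref{EQ:local_sol} (with $\stab = 0$), and appeal to uniqueness of the BDM-H local solver to read off the three asserted identities simultaneously.

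The key preparatory step will be to show that $\gradU^\timeStep_{\level,\textup{RT}} \lambda$ already sits in the smaller tensor space $[\polynomials_p(\elem)]^{3\times 3}$. Testing the RT-H version of \eqref{EQ:local_sol_rhs_zero} against every $v \in [\polynomials_p(\elem)]^3$ yields, by the polynomial-degree argument already used in the proof of Lemma \ref{LEM:sfh_bdm_loc}, the pointwise identity $-\Div \gradU^\timeStep_{\level,\textup{RT}} \lambda + \nabla p^\timeStep_{\level,\textup{RT}} \lambda = 0$, and hence $\Div \gradU^\timeStep_{\level,\textup{RT}} \lambda = \nabla p^\timeStep_{\level,\textup{RT}} \lambda \in [\polynomials_{p-1}(\elem)]^3$. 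Using the Raviart--Thomas decomposition, namely that each element of $\textrm{RT}_p(\elem)$ splits uniquely as a sum of an element of $[\polynomials_p(\elem)]^3$ and one of the form $\vec x\, r$ with $r$ a homogeneous polynomial of degree $p$, together with the elementary identity $\Div(\vec x\, r) = (3+p)\, r$, I can conclude that any element of $[\textrm{RT}_p(\elem)]^3$ whose row-wise divergence lies in $[\polynomials_{p-1}(\elem)]^3$ must already belong to $[\polynomials_p(\elem)]^{3\times 3}$. Applied row-wise to $\gradU^\timeStep_{\level,\textup{RT}} \lambda$, this delivers the desired inclusion.

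With that in hand, I would propose the candidate $(\gradU^\timeStep_{\level,\textup{RT}} \lambda, \Pi_{\level,p-1} u^\timeStep_{\level,\textup{RT}} \lambda, p^\timeStep_{\level,\textup{RT}} \lambda)$ and verify each BDM-H local equation. Equation \eqref{EQ:local_sol_gradu} is inherited from its RT-H counterpart restricted to test tensors $G \in [\polynomials_p(\elem)]^{3\times 3} \subset [\textrm{RT}_p(\elem)]^3$, upon noting that $\Div G \in [\polynomials_{p-1}(\elem)]^3$ so that $(u^\timeStep_{\level,\textup{RT}} \lambda, \Div G)_\elem = (\Pi_{\level,p-1} u^\timeStep_{\level,\textup{RT}} \lambda, \Div G)_\elem$. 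Equation \eqref{EQ:local_sol_rhs_zero} holds as the pointwise identity already derived. Equation \eqref{EQ:local_sol_bal} follows by the same projection swap, since $\nabla q \in [\polynomials_{p-1}(\elem)]^3$ for every $q \in Q_\elem$. Uniqueness of the BDM-H local solver then identifies the candidate with the genuine BDM-H triple, yielding $\gradU^\timeStep_{\level,\textup{BDM}} \lambda = \gradU^\timeStep_{\level,\textup{RT}} \lambda$, $p^\timeStep_{\level,\textup{BDM}} \lambda = p^\timeStep_{\level,\textup{RT}} \lambda$, and $u^\timeStep_{\level,\textup{BDM}} \lambda = \Pi_{\level,p-1} u^\timeStep_{\level,\textup{RT}} \lambda$ in one stroke.

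The only genuinely non-trivial ingredient is the Raviart--Thomas structure argument that forces $\gradU^\timeStep_{\level,\textup{RT}} \lambda$ into the smaller polynomial tensor space; this is where I expect the main obstacle to lie, though it is a classical feature of the decomposition. Everything else is routine bookkeeping on the nestings $[\polynomials_p(\elem)]^{3\times 3} \subset [\textrm{RT}_p(\elem)]^3$ and $[\polynomials_{p-1}(\elem)]^3 \subset [\polynomials_p(\elem)]^3$, combined with the matching pressure spaces.
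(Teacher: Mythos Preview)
Your proposal is correct and follows essentially the same route as the paper: both arguments first use the RT-H momentum equation to conclude $\Div \gradU^\timeStep_{\level,\textup{RT}} \lambda \in [\polynomials_{p-1}(\elem)]^3$ and hence $\gradU^\timeStep_{\level,\textup{RT}} \lambda \in [\polynomials_p(\elem)]^{3\times 3}$, and then appeal to well-posedness of the BDM-H local solver. The only cosmetic difference is that the paper subtracts the two triples and shows the difference solves the homogeneous BDM-H system, whereas you verify directly that the candidate solves the inhomogeneous one; these are equivalent, and your explicit Raviart--Thomas decomposition argument is a welcome elaboration of a step the paper leaves implicit.
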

\begin{proof}
 For RT, \eqref{EQ:local_sol_rhs_zero} gives us that
 \begin{equation}
  \Div \gradU^\timeStep_{\level,\textup{RT}} \lambda = \nabla p^\timeStep_{\level,\textup{RT}} \lambda \in [\polynomials_{p-1}(\elem)]^3,
 \end{equation}
 which immediately allows to deduce that $\gradU^\timeStep_{\level,\textup{RT}} \in [\polynomials_{p}(\elem)]^{3 \times 3}$. We denote
 \begin{gather*}
  e^\timeStep_\level \gradU = \gradU^\timeStep_{\level,\textup{BDM}} \lambda - \gradU^\timeStep_{\level,\textup{RT}} \lambda, \qquad
  e^\timeStep_\level u = u ^\timeStep_{\level,\textup{BDM}} \lambda - \Pi_{\level,p-1} u^\timeStep_{\level,\textup{RT}} \lambda, \\
  e^\timeStep_\level p = p^\timeStep_{\level,\textup{BDM}} \lambda - p^\timeStep_{\level,\textup{RT}} \lambda,
 \end{gather*}
 and observe that $(e^\timeStep_\level \gradU, e^\timeStep_\level u, e^\timeStep_\level p) \in [\polynomials_p(\elem)]^{3 \times 3} \times [\polynomials_{p-1}(\elem)]^3 \times \polynomials_p(\elem)$ satisfies
 \begin{subequations}\begin{align}
  (e^\timeStep_\level \gradU, G)_\elem + (e^\timeStep_\level u, \Div G)_\elem & = 0 && \forall G \in [\polynomials_p(\elem)]^{3 \times 3} \\
  (-\Div e^\timeStep_\level \gradU + \nabla e^\timeStep_\level p, v)_\elem & = 0 && \forall v \in [\polynomials_{p-1}(\elem)]^3 \\
  \tfrac{1}{\timeStep} (e^\timeStep_\level p, q)_\elem - (e^\timeStep_\level u, \nabla q) & = 0 && \forall q \in \polynomials_p(\elem).
 \end{align}\end{subequations}
 The well-posedness of \eqref{EQ:local_sol} for BDM-H implies that $e^\timeStep_\level \gradU = 0$, $e^\timeStep_\level u = 0$, and $e^\timeStep_\level p = 0$, which implies the result.
\end{proof}

With these preliminaries done we can now state the main theorem of this section:
\begin{theorem}\label{TH:bilinear_form_identical}
 For all $\lambda, \mu \in \skeletalSpace_\level$, we have that
 \begin{equation}
  a^\timeStep_{\level,\textup{BDM}}(\lambda,\mu) = a^\timeStep_{\level,\textup{RT}}(\lambda,\mu) = a^\timeStep_{\level,\textup{SFH}}(\lambda,\mu).
 \end{equation}
\end{theorem}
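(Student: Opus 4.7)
The plan is to observe that the bilinear form in \eqref{EQ:bilinear} splits into three contributions (a gradient term, a face stabilization term, and a pressure term) and show that each of these contributions is independent of which of the three methods we use. The gradient and pressure pieces will be handled directly by Lemmas \ref{LEM:sfh_bdm_loc} and \ref{LEM:rt_bdm_loc}, while the stabilization piece requires a short separate argument.

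More concretely, first I would invoke Lemmas \ref{LEM:sfh_bdm_loc} and \ref{LEM:rt_bdm_loc} to conclude that
\[
 \gradU^\timeStep_{\level,\textup{SFH}} \lambda = \gradU^\timeStep_{\level,\textup{BDM}} \lambda = \gradU^\timeStep_{\level,\textup{RT}} \lambda,
 \qquad
 p^\timeStep_{\level,\textup{SFH}} \lambda = p^\timeStep_{\level,\textup{BDM}} \lambda = p^\timeStep_{\level,\textup{RT}} \lambda,
\]
and analogously for $\mu$. This instantly identifies the first and the third summands of $a^\timeStep_\level(\lambda,\mu)$ across the three schemes.

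The only subtle point is the middle, stabilization-based term $\llangle \stab (u^\timeStep_\level \lambda - \lambda), (u^\timeStep_\level \mu - \mu) \rrangle_\level$. For RT-H and BDM-H this is trivial because $\tau^\star_\level = 0$ by definition, so $\stab \equiv 0$ on the whole skeleton and the term vanishes. For SFH the argument is slightly more delicate: $\stab$ is supported only on the distinguished face $\face^\star_\elem$ of each element, and Lemma \ref{LEM:sfh_bdm_loc} tells us exactly that $u^\timeStep_{\level,\textup{SFH}} \lambda = \lambda$ on $\face^\star_\elem$ (and similarly for $\mu$), so the integrand is pointwise zero on the support of $\stab$. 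Hence the stabilization term also vanishes for SFH, matching the other two schemes.

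I do not foresee any real obstacle; the only thing that needs care is making the role of the single-face support of $\stab$ explicit and matching it with the trace identity from Lemma \ref{LEM:sfh_bdm_loc}. Assembling these observations yields $a^\timeStep_{\level,\textup{SFH}}(\lambda,\mu) = a^\timeStep_{\level,\textup{BDM}}(\lambda,\mu) = a^\timeStep_{\level,\textup{RT}}(\lambda,\mu)$ for all $\lambda,\mu \in \skeletalSpace_\level$.
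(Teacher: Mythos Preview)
Your proposal is correct and follows essentially the same route as the paper's proof: invoke Lemmas \ref{LEM:sfh_bdm_loc} and \ref{LEM:rt_bdm_loc} to match the gradient and pressure terms, then note that the stabilization term vanishes for all three methods because $\stab$ is identically zero for RT-H and BDM-H, while for SFH it is supported only on $\face^\star_\elem$ where $u^\timeStep_{\level,\textup{SFH}}\lambda - \lambda = 0$.
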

\begin{proof}
 Lemmas \ref{LEM:sfh_bdm_loc} and \ref{LEM:rt_bdm_loc} say that for all $\lambda \in \skeletalSpace_\level$, we have
 \begin{equation}
  \gradU^\timeStep_{\level,\textup{BDM}} \lambda = \gradU^\timeStep_{\level,\textup{RT}} \lambda = \gradU^\timeStep_{\level,\textup{SFH}} \lambda
  \qquad \text{and} \qquad
  p^\timeStep_{\level,\textup{BDM}} \lambda = p^\timeStep_{\level,\textup{RT}} \lambda = p^\timeStep_{\level,\textup{SFH}} \lambda,
 \end{equation}
 which means that the first and third terms of \eqref{EQ:bilinear} are identical in all three cases. Note that $u^\timeStep_{\level,\textup{SFH}} \lambda - \lambda = 0$ on $\face^\star_\elem$, while $\tau_\level = 0$ on $\partial \elem \setminus \face^\star_\elem$ for SFH, and $\tau_\level = 0$ everywhere for BDM-H and RT-H. Thus, the second term in \eqref{EQ:bilinear} vanishes in all three cases.
\end{proof}

Similarly to the proof of Lemma \ref{LEM:sfh_bdm_loc}, we can deduce the following Lemma:
\begin{lemma}\label{LEM:sfh_bdm_loc_m}
 For all $\lambda \in \skeletalSpace_\level$, we have
 \begin{align*}
  \gradU^\timeStep_{\level,\textup{BDM}} m & = \gradU^\timeStep_{\level,\textup{SFH}} m, &
  p^\timeStep_{\level,\textup{BDM}} m & = p^\timeStep_{\level,\textup{SFH}} m, \\
  u^\timeStep_{\level,\textup{SFH}} m & = 0 \text{ on } \face^\star_\elem, &
  u^\timeStep_{\level,\textup{BDM}} m & = \Pi_{\level,p-1} u^\timeStep_{\level,\textup{SFH}} m.
 \end{align*}

\end{lemma}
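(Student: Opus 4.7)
The plan is to follow the blueprint of the proof of Lemma \ref{LEM:sfh_bdm_loc} line by line, the only substantive change being that the trace constraint on $\face^\star_\elem$ is homogeneous (because the source $m$ does not enter the boundary data) rather than equal to $\lambda$. Concretely, I would first define an auxiliary function $\tilde u^\timeStep_\level m \in [\polynomials_p(\elem)]^3$ by the two conditions $\tilde u^\timeStep_\level m = 0$ on $\face^\star_\elem$ and $(\tilde u^\timeStep_\level m, v)_\elem = (u^\timeStep_{\level,\textup{BDM}} m, v)_\elem$ for all $v \in [\polynomials_{p-1}(\elem)]^3$. A dimension count (boundary degrees of freedom on $\face^\star_\elem$ plus interior moments against $[\polynomials_{p-1}(\elem)]^3$) shows that $\tilde u^\timeStep_\level m$ is uniquely determined, exactly as for \eqref{EQ:const_tilde_u}.

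Then I would verify that the triple $(\gradU^\timeStep_{\level,\textup{BDM}} m, \tilde u^\timeStep_\level m, p^\timeStep_{\level,\textup{BDM}} m)$ satisfies the SFH version of \eqref{EQ:local_sol_m}. The first equation is immediate: since $\Div G \in [\polynomials_{p-1}(\elem)]^3$ for every $G \in [\polynomials_p(\elem)]^{3\times 3}$, the defining property of $\tilde u^\timeStep_\level m$ allows us to replace $u^\timeStep_{\level,\textup{BDM}} m$ by $\tilde u^\timeStep_\level m$ in the corresponding BDM identity, whose right hand side is already zero. The third equation follows by the same device, now using $\nabla q \in [\polynomials_{p-1}(\elem)]^3$ for $q \in \polynomials_p(\elem)$, and noting that the $m$-dependent right hand side of \eqref{EQ:local_sol_m} is method-independent.

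The middle equation is the one that deserves the most care. Testing the BDM version against $v \in [\polynomials_{p-1}(\elem)]^3$, and recalling that BDM-H uses $\tau^\star_\level = 0$, yields $(-\Div \gradU^\timeStep_{\level,\textup{BDM}} m + \nabla p^\timeStep_{\level,\textup{BDM}} m, v)_\elem = 0$ for all such $v$. Since the tested quantity itself lies in $[\polynomials_{p-1}(\elem)]^3$, it must vanish identically on $\elem$. In the SFH version the stabilization is supported on $\face^\star_\elem$, where by construction $\tilde u^\timeStep_\level m = 0$; hence the stabilization contribution is zero as well, and the equation holds for every $v \in [\polynomials_p(\elem)]^3$.

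Well-posedness of the SFH local solver then forces the constructed triple to coincide with $(\gradU^\timeStep_{\level,\textup{SFH}} m, u^\timeStep_{\level,\textup{SFH}} m, p^\timeStep_{\level,\textup{SFH}} m)$, which delivers all four identities in the statement: equality of the gradients, equality of the pressures, the homogeneous trace $u^\timeStep_{\level,\textup{SFH}} m = 0$ on $\face^\star_\elem$, and $u^\timeStep_{\level,\textup{BDM}} m = \Pi_{\level,p-1} u^\timeStep_{\level,\textup{SFH}} m$ from the $L^2$-projection condition defining $\tilde u^\timeStep_\level m$. I do not anticipate any real obstacle; the only point requiring attention is to track where the datum $m$ enters (only on the right hand side of the third equation) so that the homogeneous trace for $\tilde u^\timeStep_\level m$ is compatible with the stabilization structure on $\face^\star_\elem$ in precisely the same way that $\lambda$ was in Lemma \ref{LEM:sfh_bdm_loc}.
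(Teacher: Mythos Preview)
Your proposal is correct and is exactly what the paper intends: it states the lemma follows ``similarly to the proof of Lemma~\ref{LEM:sfh_bdm_loc}'' without giving an explicit argument, and your adaptation---replacing the trace constraint $\tilde u = \lambda$ on $\face^\star_\elem$ by the homogeneous one $\tilde u = 0$ and otherwise repeating the verification step by step---is precisely that similar proof.
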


In the following lemma, we investigate the relation between BDM-H and SFH with respect to the local problem \eqref{EQ:local_sol_f}.

\begin{lemma}\label{LEM:sfh_bdm_loc_f}
 The unknowns $\gradU^\timeStep_{\level,\textup{SFH}} f$, and $p^\timeStep_{\level,\textup{SFH}} f$ are independent of $\tau^\star_\level$, while $u^\timeStep_{\level,\textup{SFH}} f$ depends on $\tau^\star_\level$.
\end{lemma}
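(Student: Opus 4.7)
The plan is to split the SFH local system \eqref{EQ:local_sol_f} into a $\tau^\star_\level$-free subsystem that pins down $\gradU^\timeStep_{\level,\textup{SFH}} f$ and $p^\timeStep_{\level,\textup{SFH}} f$, together with a face-only equation that carries the $\tau^\star_\level$-dependence of the velocity. First I would introduce $\bar u := \Pi_{\level,p-1} u^\timeStep_{\level,\textup{SFH}} f \in [\polynomials_{p-1}(\elem)]^3$. Equation \eqref{EQ:local_sol_f_gradu} and the third equation of \eqref{EQ:local_sol_f} only see $u^\timeStep_{\level,\textup{SFH}} f$ through bulk pairings against $\Div G$ and $\nabla q$, both of which already lie in $[\polynomials_{p-1}(\elem)]^3$; hence each of them holds verbatim with $\bar u$ in place of $u^\timeStep_{\level,\textup{SFH}} f$, and neither sees $\tau^\star_\level$.

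Next I would test the momentum equation in \eqref{EQ:local_sol_f} against arbitrary $v \in [\polynomials_p(\elem)]^3$ with $v|_{\face^\star_\elem} = 0$. The SFH stabilization contribution vanishes, and since $-\Div \gradU^\timeStep_{\level,\textup{SFH}} f + \nabla p^\timeStep_{\level,\textup{SFH}} f \in [\polynomials_{p-1}(\elem)]^3$, only $\Pi_{\level,p-1} v$ enters the bulk pairing. A dimension count via Pascal's identity shows that the restriction of $\Pi_{\level,p-1}$ to the face-vanishing subspace of $[\polynomials_p(\elem)]^3$ lands in $[\polynomials_{p-1}(\elem)]^3$ between spaces of equal dimension; to obtain injectivity I would observe that any element of the kernel is $L^2$-orthogonal to $[\polynomials_{p-1}(\elem)]^3$ and vanishes on $\face^\star_\elem$, so pairing it with itself through the weight given by an affine function vanishing on $\face^\star_\elem$ forces it to be zero. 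Consequently the restricted momentum equation becomes a $\tau^\star_\level$-free relation on $(\gradU^\timeStep_{\level,\textup{SFH}} f, p^\timeStep_{\level,\textup{SFH}} f)$.

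Combining these ingredients produces a closed, $\tau^\star_\level$-independent system for the triple $(\gradU^\timeStep_{\level,\textup{SFH}} f, \bar u, p^\timeStep_{\level,\textup{SFH}} f)$; an energy argument analogous to the well-posedness of BDM-H yields uniqueness, so this triple must be independent of $\tau^\star_\level$, which gives the first claim. For the dependence of $u^\timeStep_{\level,\textup{SFH}} f$, I would then test the momentum equation against some $v \in [\polynomials_p(\elem)]^3$ with $v|_{\face^\star_\elem} \neq 0$: the identity reduces to $\tau^\star_\level \int_{\face^\star_\elem} u^\timeStep_{\level,\textup{SFH}} f \cdot v \ds = (\text{quantities already known to be } \tau^\star_\level\text{-free})$, so the face trace $u^\timeStep_{\level,\textup{SFH}} f|_{\face^\star_\elem}$ scales as $1/\tau^\star_\level$. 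I expect the main obstacle to be the isomorphism step of the second paragraph---the same degrees-of-freedom bookkeeping that powered Lemma~\ref{LEM:sfh_bdm_loc}, but applied here with the face trace as a free unknown rather than prescribed by a $\lambda$.
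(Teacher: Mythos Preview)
Your argument is correct and reaches the conclusion by a route genuinely different from the paper's. The paper first introduces an auxiliary face function $\eta \in [\polynomials_p(\face^\star_\elem)]^3$ (quoting \cite{CockburnGS10}) that represents the component of $f$ in $V^\bot_{\elem,\textup{BDM}}$, sets up a $\tau^\star_\level$-free local problem in the BDM spaces with modified right-hand side $(f,v)-\llangle\eta,v\rrangle_{\face^\star_\elem}$, and then verifies that its solution, augmented by a velocity $\hat u$ with $\tau^\star_\level\,\hat u|_{\face^\star_\elem}=\eta$, solves the SFH local problem \eqref{EQ:local_sol_f}. You go in the opposite direction: starting from the SFH solution, you project the velocity to $[\polynomials_{p-1}(\elem)]^3$ and restrict the momentum test space to functions vanishing on $\face^\star_\elem$, arriving at a closed $\tau^\star_\level$-free system for $(\gradU,\bar u,p)$. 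Both arguments rest on the same linear-algebraic fact---your isomorphism $\Pi_{\level,p-1}\colon\{v\in[\polynomials_p(\elem)]^3:v|_{\face^\star_\elem}=0\}\to[\polynomials_{p-1}(\elem)]^3$ is dual to the paper's trace isomorphism $V^\bot_{\elem,\textup{BDM}}\to[\polynomials_p(\face^\star_\elem)]^3$---but you prove it directly via the factorization $v=\phi w$ and the positivity of $\phi$, which makes your version self-contained and avoids constructing the auxiliary problem. The paper's construction, in return, yields the explicit formula $u^\timeStep_{\level,\textup{SFH}} f|_{\face^\star_\elem}=\eta/\tau^\star_\level$ with $\eta$ independent of $\tau^\star_\level$, a slightly sharper description of the dependence than your scaling argument provides.
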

\begin{proof}
 Denote $V_{\elem,\textup{BDM}} = [\polynomials_{p-1}(\elem)]^3$, and
 \begin{equation}
  V^\bot_{\elem,\textup{BDM}} = \left\{ w \in [\polynomials_p(\elem)]^3 \colon (w, \xi) = 0, \; \forall \xi \in V_{\elem,\textup{BDM}} \right\}.
 \end{equation}
 By \cite[Lem.\ A.1 and A.2]{CockburnGS10}, there is an $\eta \in [\polynomials_p(\face^\star_\elem)]^3$ such that
 \begin{equation}
  \llangle \eta, v \rrangle_{\face^\star_\elem} = (f,v) \qquad \forall v \in V^\bot_{\elem,\textup{BDM}}. \label{EQ:edge_star}
 \end{equation}
 Suppose that $(\tilde \gradU^\timeStep_\level f, \tilde u^\timeStep_\level f, \tilde p^\timeStep_\level f) \in [\polynomials_p(\elem)]^{3 \times 3} \times [\polynomials_{p-1}(\elem)]^3 \times \polynomials_p(\elem)$ satisfy
 \begin{subequations}\label{EQ:tildef}\begin{align}
  (\tilde \gradU^\timeStep_\level f, G) + (\tilde u^\timeStep_\level f, \Div G) & = 0, \label{EQ:tildef_1} \\
  (-\Div \tilde \gradU^\timeStep_\level f + \nabla \tilde p^\timeStep_\level f, v) & = (f,v) - \llangle \eta, v \rrangle_{\face^\star_\elem}, 
  \label{EQ:tildef_2}\\
  \tfrac{1}{\timeStep} (\tilde p^\timeStep_\level f, q) - (\tilde u^\timeStep_\level f, \nabla q) & = 0 \label{EQ:tildef_3}
 \end{align}\end{subequations}
 for all $(G, v, q) \in [\polynomials_p(\elem)]^{3 \times 3} \times [\polynomials_{p-1}(\elem)]^3 \times \polynomials_p(\elem)$. We construct $\hat u^\timeStep_\level f\in [\polynomials_{p}(\elem)]^3$ by
 \begin{subequations}\label{EQ:cons_u}\begin{align}
  (\hat u^\timeStep_\level f, w) & = (\tilde u^\timeStep_\level f, w) && \forall w \in V_{\elem,\textup{BDM}}, \label{EQ:cons_u_1}\\
  \tau^\star_\level \hat u^\timeStep_\level f & = \eta && \text{on } \face^\star_\elem.\label{EQ:cons_u_F}
 \end{align}\end{subequations}
 Next, we show that $\tilde \gradU^\timeStep_\level f = \gradU^\timeStep_{\level,\textup{SFH}} f$, $\tilde p^\timeStep_\level f = p^\timeStep_{\level,\textup{SFH}} f$, and $\hat u^\timeStep_\level f = u^\timeStep_{\level,\textup{SFH}} f$. For all $G \in [\polynomials_p(\elem)]^{3 \times 3}$ using \eqref{EQ:tildef_1} and \eqref{EQ:cons_u_1}, we have
 \begin{equation}\label{EQ:tilde_hat}
  (\tilde \gradU^\timeStep_\level f, G) + (\hat u^\timeStep_\level f, \Div G) = 0,
 \end{equation}
 and we can decompose any $v \in [\polynomials_p(\elem)]^3$ as $v = v_1 + v_2$ with $v_1 \in V_{\elem,\textup{BDM}}$ and $v_2 \in V^\bot_{\elem,\textup{BDM}}$. Thus using \eqref{EQ:tildef_2}, \eqref{EQ:cons_u_F} and the fact that $- \Div \tilde \gradU^\timeStep_\level + \nabla \tilde p^\timeStep_\level f\in V_{\elem,\textup{BDM}}$, we have
 \begin{align}
  (- \Div \tilde \gradU^\timeStep_\level f + \nabla \tilde p^\timeStep_\level f, v) & = (- \Div \tilde \gradU^\timeStep_\level f + \nabla \tilde p^\timeStep_\level f, v_1) \\
  & = (f,v_1) - \llangle \tau^\star_\level \hat u^\timeStep_\level f, v_1 \rrangle_{\face^\star_\elem}.
 \end{align}
 This implies that
 \begin{align}
  (- \Div \tilde \gradU^\timeStep_\level f & + \nabla \tilde p^\timeStep_\level f, v) + \tau^\star_\level \llangle \hat u^\timeStep_\level f, v \rrangle_{\face^\star_\elem} \\
  =~ & (f, v_1)- \llangle \tau^\star_\level \hat u^\timeStep_\level f, v_1 \rrangle_{\face^\star_\elem} + \tau^\star_\level \llangle \hat u^\timeStep_\level f, v \rrangle_{\face^\star_\elem} \\
  =~ & (f, v_1) + \tau^\star_\level \llangle \hat u^\timeStep_\level f, v_2 \rrangle_{\face^\star_\elem} = (f,v),
 \end{align}
 where the last equality of the last line is a consequence of \eqref{EQ:cons_u_F} and \eqref{EQ:edge_star}. In return, this implies---combined with \eqref{EQ:cons_u_1}, \eqref{EQ:tilde_hat} and \eqref{EQ:tildef_3}---that $\tilde \gradU^\timeStep_\level f =  \gradU^\timeStep_{\level,\textup{SFH}} f$, $\tilde p^\timeStep_\level f = p^\timeStep_{\level, \textup{SFH}}$, and that $\hat u^\timeStep_\level f = u^\timeStep_{\level,\textup{SFH}} f$. Since $\tilde \gradU^\timeStep_\level f$ and $\tilde p^\timeStep_\level f$ are independent of $\tau^\star_\level$, while $\hat u^\timeStep_\level f$ depends on $\tau^\star_\level$, we receive the lemma.
\end{proof}

\begin{theorem}
 The unknowns $\lambda^n_{\level,\textup{SFH}}$, $p^n_{\level,\textup{SFH}}$, and $\gradU^n_{\level,\textup{SFH}}$ are independent of the choice of $\tau^\star_\level$, while $u^n_{\level,\textup{SFH}}$ depends on $\tau^\star_\level$.
\end{theorem}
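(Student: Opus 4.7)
The plan is to proceed by induction on the time step index $n$, carrying along the assertion that $p^{n-1}_{\level,\textup{SFH}}$ is independent of $\tau^\star_\level$. The base case $n=1$ is immediate, because $p^0_\level$ is defined as the $L^2$-projection of $p_0$ onto $Q_\level$ and hence does not involve the stabilisation at all. The load-bearing identity for the inductive step is the condensed equation \eqref{EQ:hdg_condensed}, the one place where the globally coupled $\lambda^n_\level$ appears in isolation.

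Under the inductive hypothesis, I would check $\tau^\star_\level$-independence on both sides of \eqref{EQ:hdg_condensed}. The bilinear form $a^\timeStep_{\level,\textup{SFH}}$ coincides with $a^\timeStep_{\level,\textup{BDM}}$ by Theorem \ref{TH:bilinear_form_identical}, and the latter carries no stabilisation. For the right-hand side, Lemma \ref{LEM:sfh_bdm_loc} identifies $\gradU^\timeStep_{\level,\textup{SFH}} \mu$ and $p^\timeStep_{\level,\textup{SFH}} \mu$ with their BDM-H counterparts and pins down $u^\timeStep_{\level,\textup{SFH}} \mu$ via the trace condition $u^\timeStep_{\level,\textup{SFH}} \mu = \mu$ on $\face^\star_\elem$ together with a $\tau^\star_\level$-free $L^2$-projection; consequently both $(f, u^\timeStep_{\level,\textup{SFH}} \mu)$ and $(p^{n-1}_{\level,\textup{SFH}}, p^\timeStep_{\level,\textup{SFH}} \mu)$ are independent of $\tau^\star_\level$, and so the unique solution $\lambda^n_{\level,\textup{SFH}}$ of \eqref{EQ:hdg_condensed} is as well.

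To propagate the independence to $\gradU^n_\level$ and $p^n_\level$, I would invoke the linearity of the three local solvers \eqref{EQ:local_sol}, \eqref{EQ:local_sol_f}, and \eqref{EQ:local_sol_m} to write
\[
(\gradU^n_\level, u^n_\level, p^n_\level) = (\gradU^\timeStep_\level \lambda^n_\level, u^\timeStep_\level \lambda^n_\level, p^\timeStep_\level \lambda^n_\level) + (\gradU^\timeStep_\level f, u^\timeStep_\level f, p^\timeStep_\level f) + (\gradU^\timeStep_\level p^{n-1}_\level, u^\timeStep_\level p^{n-1}_\level, p^\timeStep_\level p^{n-1}_\level).
\]
Lemmas \ref{LEM:sfh_bdm_loc}, \ref{LEM:sfh_bdm_loc_m}, and \ref{LEM:sfh_bdm_loc_f} together guarantee that the $\gradU$- and $p$-components of all three summands agree with their BDM-H counterparts, hence are $\tau^\star_\level$-independent; this closes the induction for $\gradU^n_{\level,\textup{SFH}}$ and $p^n_{\level,\textup{SFH}}$. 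The same decomposition settles the opposite claim for the velocity: $u^\timeStep_{\level,\textup{SFH}} \lambda^n$ and $u^\timeStep_{\level,\textup{SFH}} p^{n-1}$ are $\tau^\star_\level$-free by Lemmas \ref{LEM:sfh_bdm_loc} and \ref{LEM:sfh_bdm_loc_m}, whereas Lemma \ref{LEM:sfh_bdm_loc_f} shows that $u^\timeStep_{\level,\textup{SFH}} f$ genuinely involves $\tau^\star_\level$ through the identity $\tau^\star_\level \hat u^\timeStep_\level f = \eta$ on $\face^\star_\elem$. Since the $\tau^\star_\level$-free summands cannot cancel this contribution, the dependence survives in $u^n_{\level,\textup{SFH}}$.

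The main obstacle I anticipate is not analytic but combinatorial: justifying that the linearity decomposition above is exact requires adding the three local problems equation by equation, including the careful interaction between the stabilisation term $\llangle \stab(u^n_\level - \lambda^n_\level), v \rrangle_\level$ in \eqref{EQ:scheme} and the variously placed $\llangle \stab \cdot, v\rrangle$ pieces of \eqref{EQ:local_sol}, \eqref{EQ:local_sol_f}, and \eqref{EQ:local_sol_m}. Once this bookkeeping is done, every remaining step of the proof reduces to a direct invocation of one of the preceding lemmas.
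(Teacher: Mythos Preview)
Your proposal is correct and follows essentially the same route as the paper: Theorem~\ref{TH:bilinear_form_identical} and Lemma~\ref{LEM:sfh_bdm_loc} for the two sides of \eqref{EQ:hdg_condensed}, then the additive decomposition into the three local solvers combined with Lemmas~\ref{LEM:sfh_bdm_loc}, \ref{LEM:sfh_bdm_loc_m}, and \ref{LEM:sfh_bdm_loc_f}. The decomposition you flag as the ``main obstacle'' is precisely what the paper imports from \cite[Thm.~3.1]{NguyenPC10}, so no fresh bookkeeping is required; your explicit induction on $n$ is in fact slightly more careful than the paper, which leaves the dependence of the right-hand side on $p^{n-1}_\level$ implicit.
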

\begin{proof}
 We know by Theorem \ref{TH:bilinear_form_identical} that bilinear form $a^\timeStep_{\level,\textup{SFH}}$ is independent of $\tau^\star_\level$. Furthermore, Lemma \ref{LEM:sfh_bdm_loc} says that $u^\timeStep_{\level,\textup{SFH}} \lambda$ can be determined as
 \begin{subequations}\begin{align}
  (u^\timeStep_{\level,\textup{SFH}} \lambda, v) & = (u^\timeStep_{\level,\textup{BDM}}, v) && \forall v \in [\polynomials_{p-1}(\elem)]^3, \\
  u^\timeStep_{\level,\textup{SFH}} \lambda & = \lambda && \text{on } \face^\star_\elem,
 \end{align}\end{subequations}
 which is independent of $\tau^\star_\level$. Thus, the right-hand side of \eqref{EQ:hdg_condensed} is independent of $\tau_\level$, which implies that $\lambda^n_{\level,\textup{SFH}}$ is independent of $\tau^\star_\level$.

 From \cite[Thm.\ 3.1]{NguyenPC10}, we know that
 \begin{subequations}\begin{align}
  \gradU^n_{\level,\textup{SFH}} & = \gradU^\timeStep_{\level,\textup{SFH}} \lambda^n_{\level,\textup{SFH}} + \gradU^\timeStep_{\level,\textup{SFH}} p^{n-1}_\level + \gradU^\timeStep_{\level,\textup{SFH}} f, \\
  u^n_{\level,\textup{SFH}} & = u^\timeStep_{\level,\textup{SFH}} \lambda^n_{\level,\textup{SFH}} + u^\timeStep_{\level,\textup{SFH}} p^{n-1}_\level + u^\timeStep_{\level,\textup{SFH}} f, \\
  p^n_{\level,\textup{SFH}} & = p^\timeStep_{\level,\textup{SFH}} \lambda^n_{\level,\textup{SFH}} + p^\timeStep_{\level,\textup{SFH}} p^{n-1}_\level + p^\timeStep_{\level,\textup{SFH}} f.
 \end{align}\end{subequations}
 Combining this with Lemmas \ref{LEM:sfh_bdm_loc}, \ref{LEM:sfh_bdm_loc_m} and \ref{LEM:sfh_bdm_loc_f} finishes the proof.
\end{proof}
\begin{remark}
 Numerical results in \cite{CockburnNP10} show that for the SFH method, $\| p - p^n_{\level,\textup{SFH}} \|$ and $\| \gradU - \gradU^n_{\level,\textup{SFH}} \|$ are independent of $\tau^\star_\level$, while $\| u - u^n_{\level,\textup{SFH}} \|$ varies for different choices of $\tau^\star_\level$. This validates the aforementioned theorem.
\end{remark}

The (condensed) stiffness matrices of BDM-H, RT-H, and SFH are identical, while the respective right-hand side vectors are different, see Theorem \ref{TH:bilinear_form_identical}. Hence, it is sufficient to prove the convergence of the V--cycle multigrid method for one of these schemes. To this end, we will focus on RT-H and will, for simplicity, omit the subscript `RT' in the following section. We define
\begin{subequations}\begin{align}
 A_\level\colon & \skeletalSpace_\level \to \skeletalSpace_\level \\
 & \langle A_\level \lambda, \mu \rangle_\level = a_\level^\timeStep(\lambda,\mu) \qquad \forall \mu \in \skeletalSpace_\level. 
\end{align}\end{subequations}
% 
% --------------------------------------------------------------------------------------------------
\section{Some properties of the local solvers}
% --------------------------------------------------------------------------------------------------
% 
In this section, we prove some properties of the RT-H local solvers which play an important role for the convergence analysis of multigrid method and give  the condition number for the HDG method. The following lemma uses the space of overall continuous, element-wise linear finite elements
\begin{equation}\label{EQ:lin_elem_def}
 \linElementSpace_\level = \{ v \in [C(\Omega)]^3\colon v_i|_\elem \text{ is linear } \forall \elem \in \mesh_\level, i = 1,\dots, 3 \}.
\end{equation}
Its proof is simple enough to be omitted.

\begin{lemma}\label{LEM:linear_constant}
 If $\mu = \gamma_\level w$ for some $w \in \linElementSpace_\level$, then
 \begin{equation}
  \gradU^\timeStep_\level \mu = \nabla w, \qquad u^\timeStep_\level \mu = w, \qquad p^\timeStep_\level \mu = - \timeStep \Div w.\label{EQ:linear}
 \end{equation}
 If $\mu \equiv c$ is constant on $\partial \elem$, then
 \begin{equation}
  \gradU^\timeStep_\level \mu = 0, \qquad u^\timeStep_\level \mu = c, \qquad p^\timeStep_\level \mu = 0.\label{EQ:constant}
 \end{equation}
\end{lemma}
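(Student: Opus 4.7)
The plan is to verify by direct substitution that the proposed candidates solve the element-local system \eqref{EQ:local_sol}, and then conclude by well-posedness (uniqueness) of the local solver. Since we focus on RT-H, the stabilization term vanishes ($\stab = 0$), which considerably simplifies the verification.

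For the first assertion, I would first check that the triple $(\nabla w, w, -\timeStep \Div w)$ lies in $W_\elem \times V_\elem \times Q_\elem$ for RT-H: since $w|_\elem$ is componentwise linear, $\nabla w$ is a constant matrix (hence in $[\operatorname{RT}_p]^3$), $w \in [\polynomials_p(\elem)]^3$, and $\Div w$ is constant on $\elem$. Plugging these into \eqref{EQ:local_sol_gradu}, integration by parts yields
\begin{equation*}
 (\nabla w, G)_\elem + (w, \Div G)_\elem = \llangle w, G\Nu \rrangle_{\partial \elem} = \llangle \mu, G\Nu \rrangle_{\partial \elem},
\end{equation*}
using continuity of $w$ so that $w|_{\partial \elem} = \gamma_\level w = \mu$. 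For \eqref{EQ:local_sol_rhs_zero}, elementwise linearity gives $\Div \nabla w = 0$ and $\nabla \Div w = 0$, while $\stab = 0$, so both sides vanish. For \eqref{EQ:local_sol_bal}, the $\frac{1}{\timeStep}$ factor cancels against $-\timeStep \Div w$, and another integration by parts gives
\begin{equation*}
 -(\Div w, q)_\elem - (w, \nabla q)_\elem = -\llangle w \cdot \Nu, q \rrangle_{\partial \elem} = -\llangle \mu \cdot \Nu, q \rrangle_{\partial \elem}.
\end{equation*}

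The second assertion is handled analogously by substituting $(0, c, 0)$: the first equation reduces to $(c, \Div G)_\elem = \llangle c, G \Nu \rrangle_{\partial \elem}$, which is exactly the divergence theorem applied componentwise; the second equation reduces to $0 = 0$; and the third becomes $-(c, \nabla q)_\elem = -\llangle c \cdot \Nu, q \rrangle_{\partial \elem}$, again the divergence theorem. Having produced functions satisfying all three equations in \eqref{EQ:local_sol}, the well-posedness of the RT-H local solver (which yields uniqueness of $(\gradU^\timeStep_\level \mu, u^\timeStep_\level \mu, p^\timeStep_\level \mu)$ for any given $\mu$) identifies these candidates with the actual local solutions.

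There is no real obstacle here; the only point deserving a sentence of care is the membership of $\nabla w$ in the RT space (immediate because $\nabla w$ is a constant tensor on each element) and the fact that well-posedness of \eqref{EQ:local_sol} has been established earlier so that uniqueness may be invoked. This matches the authors' remark that the proof is short enough to omit.
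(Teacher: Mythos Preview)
Your proposal is correct and is precisely the direct verification the authors allude to when they say the proof ``is simple enough to be omitted''; the paper gives no proof of its own, so there is nothing further to compare.
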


\begin{lemma}\label{LEM:stability_local}
 For all $\mu \in \skeletalSpace_\level$, we have
 \begin{align}
  \| \gradU^\timeStep_\level \mu \|_0 & \lesssim \sqrt{1 + \timeStep} h^{-1}_\level \| \mu \|_\level, \label{EQ:stability_l}\\
  \| u^\timeStep_\level \mu \|_0 & \lesssim \sqrt{1 + \timeStep} \| \mu \|_\level, \label{EQ:stability_u}\\
  \| p^\timeStep_\level \mu \|_0 & \lesssim \sqrt{\timeStep (1 + \timeStep)} h^{-1}_\level \| \mu \|_\level \label{EQ:stability_p}.
 \end{align}
\end{lemma}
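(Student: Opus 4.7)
The plan is to use the natural energy identity, which comes from testing the three local equations with the solution itself, to bound $\gradU^\timeStep_\level\mu$ and $p^\timeStep_\level\mu$; the bound on $u^\timeStep_\level\mu$ will require an extra Fortin-type argument exploiting the surjectivity of the divergence from $[\textup{RT}_p(\elem)]^3$ onto $[\polynomials_p(\elem)]^3$.

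Concretely, I would test \eqref{EQ:local_sol_gradu} with $G = \gradU^\timeStep_\level\mu$, \eqref{EQ:local_sol_rhs_zero} with $v = u^\timeStep_\level\mu$ (using $\stab\equiv 0$ for RT-H), and \eqref{EQ:local_sol_bal} with $q = p^\timeStep_\level\mu$, then add the resulting equations over all $\elem\in\mesh_\level$. The second equation makes the mixed term $(u^\timeStep_\level\mu,\Div \gradU^\timeStep_\level\mu - \nabla p^\timeStep_\level\mu)_\elem$ vanish, leaving the energy identity
\begin{equation*}
 \|\gradU^\timeStep_\level \mu\|_0^2 + \tfrac{1}{\timeStep}\|p^\timeStep_\level \mu\|_0^2
 = \sum_{\elem\in\mesh_\level}\Bigl(\llangle \mu, \gradU^\timeStep_\level \mu\,\Nu\rrangle_{\partial\elem} - \llangle \mu\cdot\Nu, p^\timeStep_\level \mu\rrangle_{\partial\elem}\Bigr).
\end{equation*}
Both boundary terms are then treated by Cauchy--Schwarz, the discrete trace inequality $\|w\|_{L^2(\partial\elem)}\lesssim h_\elem^{-1/2}\|w\|_\elem$ (valid for polynomials), and Young's inequality with the second term weighted by $1/\timeStep$. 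Absorbing the resulting $\|\gradU^\timeStep_\level\mu\|_\elem^2$ and $\tfrac{1}{\timeStep}\|p^\timeStep_\level\mu\|_\elem^2$ contributions on the left, and recognizing that $\sum_\elem h_\elem^{-1}\|\mu\|^2_{L^2(\partial\elem)}\cong h_\level^{-2}\|\mu\|_\level^2$ (by the definition of $\|\cdot\|_\level$ and shape-regularity), yields \eqref{EQ:stability_l} and \eqref{EQ:stability_p} at the same time.

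For \eqref{EQ:stability_u}, the energy identity gives no direct control of $u^\timeStep_\level\mu$, so I would instead return to \eqref{EQ:local_sol_gradu} and choose a tailored test function. Using that $\Div \colon [\textup{RT}_p(\elem)]^3 \to [\polynomials_p(\elem)]^3$ is surjective, standard Fortin/Piola scaling on a shape-regular simplex produces $G\in W_\elem$ with $\Div G = u^\timeStep_\level\mu$ and
\begin{equation*}
 \|G\|_\elem \lesssim h_\elem \|u^\timeStep_\level\mu\|_\elem, \qquad \|G\Nu\|_{L^2(\partial\elem)} \lesssim h_\elem^{1/2}\|u^\timeStep_\level\mu\|_\elem,
\end{equation*}
after splitting $u^\timeStep_\level\mu$ into its mean part (for which an explicit $G\propto(x-x_\elem)$ gives the claimed scaling) and a mean-free part (handled by a divergence-preserving extension in $H_0(\Div,\elem)$). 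Substituting yields $\|u^\timeStep_\level\mu\|_\elem^2 \lesssim h_\elem\|\gradU^\timeStep_\level\mu\|_\elem \|u^\timeStep_\level\mu\|_\elem + h_\elem^{1/2}\|\mu\|_{L^2(\partial\elem)}\|u^\timeStep_\level\mu\|_\elem$, hence $\|u^\timeStep_\level\mu\|_\elem \lesssim h_\elem\|\gradU^\timeStep_\level\mu\|_\elem + h_\elem^{1/2}\|\mu\|_{L^2(\partial\elem)}$. Summing, using shape-regularity to convert $\sum_\elem h_\elem\|\mu\|_{L^2(\partial\elem)}^2 \cong \|\mu\|_\level^2$, and finally inserting \eqref{EQ:stability_l} gives \eqref{EQ:stability_u}.

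The main obstacle is the third estimate: the energy identity is blind to $u^\timeStep_\level\mu$ (because the stabilization vanishes for RT-H and the coupling term is eliminated by \eqref{EQ:local_sol_rhs_zero}), so the proof must invoke the RT-specific structure through a Fortin-type lift and carefully track the $h_\elem$ powers in both the volume and trace norms of the constructed $G$ so that no spurious $h_\level^{-1}$ factor survives. Everything else is routine application of the discrete trace inequality, Cauchy--Schwarz, and Young's inequality.
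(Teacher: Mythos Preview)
Your proposal is correct and follows essentially the same route as the paper: the energy identity obtained by testing \eqref{EQ:local_sol} with $(\gradU^\timeStep_\level\mu,u^\timeStep_\level\mu,p^\timeStep_\level\mu)$ for \eqref{EQ:stability_l} and \eqref{EQ:stability_p}, and the surjectivity of $\Div\colon W_\elem\to V_\elem$ with the $h_\elem$-scaled right inverse for \eqref{EQ:stability_u}. The only cosmetic difference is that the paper packages your Fortin-type construction as a citation of \cite[Lem.~4.1]{CockburnG05}, i.e.\ the inf-sup estimate $\|u^\timeStep_\level\mu\|_{0,\elem}\lesssim h_\elem\sup_{G\in W_\elem}(u^\timeStep_\level\mu,\Div G)_\elem/\|G\|_{0,\elem}$, rather than building $G$ explicitly.
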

\begin{proof}
 Setting $\lambda = \mu$, $G = \gradU^\timeStep_\level \mu$, $v = u^\timeStep_\level \mu$, and $q = p^\timeStep_\level \mu$ in \eqref{EQ:local_sol}, we have
 \begin{multline}
  \| \gradU^\timeStep_\level \mu \|^2_{0,\elem} + \tfrac{1}{\timeStep} \| p^\timeStep_\level \mu \|^2_{0,\elem} = \llangle \mu, \gradU^\timeStep_\level \mu \Nu \rrangle_{\partial \elem} - \llangle \mu, p^\timeStep_\level \mu \Nu \rrangle_{\partial \elem} \\
  \le \tfrac{1}{2} \| \gradU^\timeStep_\level \mu \|^2_{0,\elem} + C h^{-1}_\level \nnorm \mu \nnorm^2_{\level, \partial \elem} + \tfrac{1}{2 (\timeStep)} \| p^\timeStep_\level \mu \|^2_{0,\elem} + C \timeStep h^{-1}_\level \nnorm \mu \nnorm^2_{\level, \partial \elem},
 \end{multline}
 where the inequality can be obtained using a combination of Young's inequality and the trace theorem. Simplifying this inequality and summing over all elements $\elem \in \mesh_\level$, we receive \eqref{EQ:stability_l} and \eqref{EQ:stability_p}.

 By \cite[Lem.\ 4.1]{CockburnG05}, we have
 \begin{align}
  \| u^\timeStep_\level \mu \|_0 & \lesssim h_\level \sup_{G \in W_\elem} \frac{(u^\timeStep_\level \mu, \Div G)}{\| G \|_{0,\elem}} \overset{\eqref{EQ:local_sol_gradu}}{=} h_\level \sup_{G \in W_\elem} \frac{\llangle \mu, G \Nu \rrangle_{\partial \elem} - (\gradU^\timeStep_\level \lambda, G)_\elem}{\| G \|_{0,\elem}} \notag\\
  & \lesssim h_\level \| \gradU^\timeStep_\level \lambda \|_{0,\elem} + \| \mu \|_{\level, \partial\elem},
 \end{align}
 which immediately implies \eqref{EQ:stability_u}.
\end{proof}

\begin{lemma}\label{LEM:stability_local_f}
 For all $f \in L^2(\Omega)$, we have
 \begin{align}
  \| \gradU^\timeStep_\level f \|_0 & \lesssim h_\level \| f \|_0, \label{EQ:stability_l_f}\\
  \| u^\timeStep_\level f \|_0 & \lesssim h^2_\level \| f \|_0, \label{EQ:stability_u_f}\\
  \| p^\timeStep_\level f  \|_0 & \lesssim \sqrt{\timeStep} h_\level \| f \|_0 \label{EQ:stability_p_f}.
 \end{align}
\end{lemma}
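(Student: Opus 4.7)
The plan is to mimic the energy argument that yielded Lemma \ref{LEM:stability_local}, but now exploiting that the right-hand side is the load $f$ rather than the trace $\mu$, and that for RT-H the stabilization $\stab$ vanishes.

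First I would test the three equations of \eqref{EQ:local_sol_f} with $G=\gradU^\timeStep_\level f$, $v=u^\timeStep_\level f$, and $q=p^\timeStep_\level f$. Equation \eqref{EQ:local_sol_f_gradu} gives
\begin{equation*}
 (u^\timeStep_\level f,\Div \gradU^\timeStep_\level f)_\elem = -\| \gradU^\timeStep_\level f\|_{0,\elem}^2,
\end{equation*}
the third equation of \eqref{EQ:local_sol_f} yields $(u^\timeStep_\level f,\nabla p^\timeStep_\level f)_\elem = \tfrac{1}{\timeStep}\| p^\timeStep_\level f\|_{0,\elem}^2$ after integration by parts on $\nabla q$ versus $q$ (here I use that the stabilization is absent so that the jump/trace terms on $\partial\elem$ match up), and the second equation---with $\stab\equiv 0$ for RT-H---then reads
\begin{equation*}
 \| \gradU^\timeStep_\level f\|_{0,\elem}^2 + \tfrac{1}{\timeStep}\| p^\timeStep_\level f\|_{0,\elem}^2 = (f,u^\timeStep_\level f)_\elem.
\end{equation*}
Summing over $\elem\in\mesh_\level$ and applying Cauchy--Schwarz gives the master estimate
\begin{equation*}
 \| \gradU^\timeStep_\level f\|_0^2 + \tfrac{1}{\timeStep}\| p^\timeStep_\level f\|_0^2 \;\leq\; \| f\|_0\, \| u^\timeStep_\level f\|_0.
\end{equation*}

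Next I would bound $\| u^\timeStep_\level f\|_0$ by the discrete inf-sup lemma \cite[Lem.\ 4.1]{CockburnG05} used in the proof of Lemma \ref{LEM:stability_local}. Since \eqref{EQ:local_sol_f_gradu} has \emph{no} boundary contribution, we get directly
\begin{equation*}
 \| u^\timeStep_\level f\|_{0,\elem} \lesssim h_\level \sup_{G\in W_\elem}\frac{(u^\timeStep_\level f,\Div G)_\elem}{\| G\|_{0,\elem}} = h_\level \sup_{G\in W_\elem}\frac{-(\gradU^\timeStep_\level f,G)_\elem}{\| G\|_{0,\elem}} \leq h_\level \| \gradU^\timeStep_\level f\|_{0,\elem},
\end{equation*}
and summing yields $\| u^\timeStep_\level f\|_0 \lesssim h_\level\,\| \gradU^\timeStep_\level f\|_0$.

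Finally I would combine the two inequalities. Substituting the $u$-bound into the master estimate produces
\begin{equation*}
 \| \gradU^\timeStep_\level f\|_0^2 + \tfrac{1}{\timeStep}\| p^\timeStep_\level f\|_0^2 \;\lesssim\; h_\level\, \| f\|_0\, \| \gradU^\timeStep_\level f\|_0,
\end{equation*}
from which Young's inequality immediately gives \eqref{EQ:stability_l_f}, namely $\| \gradU^\timeStep_\level f\|_0\lesssim h_\level\| f\|_0$. Feeding this back gives \eqref{EQ:stability_u_f}, $\| u^\timeStep_\level f\|_0\lesssim h_\level^2\| f\|_0$, and plugging both into the master identity yields $\tfrac{1}{\timeStep}\| p^\timeStep_\level f\|_0^2\lesssim h_\level^2\| f\|_0^2$, i.e. \eqref{EQ:stability_p_f}. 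The only real subtlety is verifying that the stabilization term disappears in the RT-H setting we fixed at the end of Section \ref{SEC:connections}---this is precisely what makes the energy identity clean; otherwise an extra positive jump contribution $\nnorm \stab^{1/2}u^\timeStep_\level f\nnorm^2$ would have to be absorbed on the left and would not change the final rates.
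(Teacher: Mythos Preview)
Your proposal is correct and follows essentially the same route as the paper: test \eqref{EQ:local_sol_f} with $(\gradU^\timeStep_\level f,\,u^\timeStep_\level f,\,p^\timeStep_\level f)$ to obtain the energy identity $\| \gradU^\timeStep_\level f\|_{0,\elem}^2+\tfrac{1}{\timeStep}\| p^\timeStep_\level f\|_{0,\elem}^2=(f,u^\timeStep_\level f)_\elem$, then invoke \cite[Lem.\ 4.1]{CockburnG05} on \eqref{EQ:local_sol_f_gradu} to get $\| u^\timeStep_\level f\|_{0,\elem}\lesssim h_\level\| \gradU^\timeStep_\level f\|_{0,\elem}$, and combine. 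One minor remark: no integration by parts is actually needed to extract $(u^\timeStep_\level f,\nabla p^\timeStep_\level f)_\elem=\tfrac{1}{\timeStep}\|p^\timeStep_\level f\|_{0,\elem}^2$ from the third equation---it is obtained directly by setting $q=p^\timeStep_\level f$.
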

\begin{proof}
 Setting$G = \gradU^\timeStep_\level f$, $v = u^\timeStep_\level f$, and $q = p^\timeStep_\level f$ in \eqref{EQ:local_sol_f}, we have
 \begin{equation}
  \| \gradU^\timeStep_\level f \|^2_{0,\elem} + \tfrac{1}{\timeStep} \| p^\timeStep_\level f \|^2_{0,\elem} = (f, u^\timeStep_\level f)_\elem,\label{EQ:bound_f}
 \end{equation}
 and after applying H\"older's inequality we can bound
 \begin{align}
  \| u^\timeStep_\level f \|_{0,\elem} & \lesssim h_\level \sup_{G \in W_\elem} \frac{(u^\timeStep_\level f, \Div G)}{\| G \|_{0,\elem}} \notag\\
  & = h_\level \sup_{G \in W_\elem} \frac{(- \gradU^\timeStep_\level f, G)}{\| G \|_{0,\elem}} \le h_\level \| \gradU^\timeStep_\level f \|_{0,\elem},\label{EQ:bound_u}
 \end{align}
 where the first inequality again uses \cite[Lem.\ 4.1]{CockburnG05} and the equality is \eqref{EQ:local_sol_f_gradu}. Combining the above equations, we can deduce that
 \begin{equation}
  \| \gradU^\timeStep_\level f \|^2_{0,\elem} \lesssim \| f \|_{0,\elem} h_\level \| \gradU^\timeStep_\level f \|_{0,\elem}, 
 \end{equation}
 which implies \eqref{EQ:stability_l_f}. The remaining inequalities \eqref{EQ:stability_u_f} and \eqref{EQ:stability_p_f} are, again, direct consequences of \eqref{EQ:stability_l_f}, \eqref{EQ:bound_f}, and \eqref{EQ:bound_u}.
\end{proof}

We define the lifting
\begin{equation*}
 \liftingOp^\timeStep_\level \colon \skeletalSpace_\level \to \liftingOp^\timeStep_\level \skeletalSpace_\level \subset  \contElementSpace_{\level, p + 4} = \{ v \in C(\Omega) \mid v \in [\polynomials_{p+4}(\elem)]^3\quad \forall \elem \in \mesh_\level \},
\end{equation*} which can be interpreted as a vector version of $\liftingOp_\level$ in \cite{LuRK21}, via
\begin{subequations}\label{EQ:lifting_def}\begin{align}
 (\liftingOp^\timeStep_\level \lambda, v)_\elem & = (u^\timeStep_\level \lambda, v)_\elem && \forall v \in [\polynomials_p(\elem)]^3, \\
 \langle \liftingOp^\timeStep_\level \lambda, \eta \rangle_\face & = \langle \lambda, \eta \rangle_\face && \forall \eta \in [\polynomials_{p+1}(\face)]^3, \; \face \subset \partial \elem, \\
 \langle \liftingOp^\timeStep_\level \lambda, \xi \rangle_\edge & = \langle \avg{\lambda}_\edge, \xi \rangle_\edge && \forall \xi \in [\polynomials_{p+2}(\edge)]^3, \; \edge \text{ is edge of } \elem, \\
 [\liftingOp^\timeStep_\level \lambda](\vertex) & = \avg{\lambda}(\vertex) && \forall \vertex \text{ is vertex of } \elem.
\end{align}\end{subequations}
Here $\avg{\lambda}$ is the average taken over all cells adjacent to vertex $a$ or edge $\edge$.
Obviously, we have that
\begin{equation}
 \liftingOp^\timeStep_\level \gamma_\level w = w \qquad \text{ for } w \in \linElementSpace_\level,
\end{equation}
and by the standard scaling argument and \eqref{EQ:stability_u} we can deduce that
\begin{equation}\label{EQ:norm_equiv}
 \| \lambda \|_{\level} \lesssim \| \liftingOp^\timeStep_\level \lambda \|_{0} \lesssim \sqrt{1 + \timeStep} \| \lambda \|_{\level} \qquad \forall \lambda \in \skeletalSpace_\level.
\end{equation}

\begin{lemma}\label{LEM:trace_diff}
 For all $\lambda \in \skeletalSpace_\level $, we have
 \begin{equation*}
  \| u^\timeStep_\level \lambda - \lambda \|_\level \lesssim h_\level \sqrt{1 + \timeStep} \| \gradU^\timeStep_\level \lambda \|_0.
 \end{equation*}
\end{lemma}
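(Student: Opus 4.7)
The plan is to work element by element and exploit a carefully chosen test function in \eqref{EQ:local_sol_gradu}. Since $u^\timeStep_\level \lambda \in [\polynomials_p(\elem)]^3$ is polynomial on each cell, integration by parts in \eqref{EQ:local_sol_gradu} yields the key identity
\begin{equation*}
 (\gradU^\timeStep_\level \lambda - \nabla u^\timeStep_\level \lambda, G)_\elem
 = \llangle \lambda - u^\timeStep_\level \lambda, G \Nu \rrangle_{\partial \elem}
 \qquad \forall G \in W_\elem .
\end{equation*}
The strategy is then to select a specific $G$ that makes the right-hand side proportional to $-\| u^\timeStep_\level \lambda - \lambda \|^2_{0,\partial\elem}$ while simultaneously annihilating the $\nabla u^\timeStep_\level \lambda$ contribution on the left.

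Here the Raviart--Thomas degree-of-freedom structure of $W_\elem = [\text{RT}_p(\elem)]^3$ is crucial. Each row $G^i$ of $G$ is uniquely fixed by its face moments against $\polynomials_p(\face)$ together with its interior moments against $[\polynomials_{p-1}(\elem)]^3$. I would define $G$ by prescribing $G \Nu = u^\timeStep_\level \lambda - \lambda$---legitimate because $u^\timeStep_\level \lambda - \lambda \in [\polynomials_p(\face)]^3$ on each face---and setting all interior moments to zero. Since each row of $\nabla u^\timeStep_\level \lambda$ lies in $[\polynomials_{p-1}(\elem)]^3$, exactly the space against which the interior moments vanish, the equality $(\nabla u^\timeStep_\level \lambda, G)_\elem = 0$ holds essentially by construction; a Piola scaling argument on the reference element further provides the stability bound $\| G \|_{0,\elem} \lesssim h_\elem^{1/2} \, \| u^\timeStep_\level \lambda - \lambda \|_{0, \partial \elem}$.

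Substituting this $G$ into the identity, applying Cauchy--Schwarz, and cancelling a common factor yields the local estimate $\| u^\timeStep_\level \lambda - \lambda \|_{0, \partial \elem} \lesssim h_\elem^{1/2} \| \gradU^\timeStep_\level \lambda \|_{0, \elem}$. Summing over $\elem \in \mesh_\level$ and invoking the equivalence $\| \mu \|^2_\level \sim h_\level \sum_\elem \| \mu \|^2_{0, \partial \elem}$, which follows from $|\elem|/|\partial\elem| \sim h_\elem$ under shape regularity, then gives $\| u^\timeStep_\level \lambda - \lambda \|_\level \lesssim h_\level \| \gradU^\timeStep_\level \lambda \|_0$, which is even slightly stronger than the stated bound since $\sqrt{1+\timeStep} \ge 1$.

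The main obstacle to anticipate is the elimination of $\nabla u^\timeStep_\level \lambda$ from the left-hand side: a naive Raviart--Thomas extension of $u^\timeStep_\level \lambda - \lambda$ without the zero-interior-moment constraint would leave a residual $\|\nabla u^\timeStep_\level \lambda\|_{0,\elem}$ term that cannot be bounded by $\|\gradU^\timeStep_\level \lambda\|_0$ via the stability estimates of Lemmas \ref{LEM:stability_local} and \ref{LEM:stability_local_f} alone. The interior-moment construction circumvents this difficulty by exploiting precisely the match between the gradient space $\nabla V_\elem \subset [\polynomials_{p-1}(\elem)]^{3 \times 3}$ and the interior test space of the Raviart--Thomas elements making up $W_\elem$.
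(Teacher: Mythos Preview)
Your argument is correct and in fact yields the sharper bound $\| u^\timeStep_\level \lambda - \lambda \|_\level \lesssim h_\level \| \gradU^\timeStep_\level \lambda \|_0$, without the factor $\sqrt{1+\timeStep}$. The route, however, is genuinely different from the paper's. The paper does \emph{not} bound $\| u^\timeStep_\level \lambda - \lambda \|_\level$ directly. Instead it first controls $\| \lambda - m_\elem(\lambda) \|_\level$, the deviation of $\lambda$ from its facewise mean, by testing \eqref{EQ:local_sol_gradu} with a \emph{divergence-free} $G$ (constructed via the polynomial extension of Demkowicz--Gopalakrishnan--Sch\"oberl) whose normal trace equals $\lambda - m_\elem(\lambda)$; divergence-freeness kills the $(u^\timeStep_\level\lambda,\Div G)$ term outright. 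Only then does it pass to $\| u^\timeStep_\level \lambda - \lambda \|_\level$ by writing $u^\timeStep_\level\lambda - \lambda = u^\timeStep_\level(\lambda-m_\elem(\lambda)) - (\lambda-m_\elem(\lambda))$ (using that constants are reproduced, Lemma~\ref{LEM:linear_constant}) and invoking the stability estimate \eqref{EQ:stability_u}; this last step is precisely where the $\sqrt{1+\timeStep}$ enters. Your approach sidesteps this detour: by choosing the Raviart--Thomas lifting with normal trace $u^\timeStep_\level\lambda-\lambda$ and zero interior moments you annihilate $(\nabla u^\timeStep_\level\lambda,G)_\elem$ via orthogonality rather than eliminating the $u$-term through $\Div G=0$, and you never need the stability lemma. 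Amusingly, the very construction you use is employed by the paper itself later, in the proof of Theorem~\ref{TH:error_auxiliary}, for a different purpose. The trade-off is that your argument leans explicitly on the RT structure of $W_\elem$ and the fact that $\nabla V_\elem \subset [\polynomials_{p-1}(\elem)]^{3\times3}$, whereas the paper's route is more method-agnostic (it only uses constant reproduction and stability); since the paper has already reduced to RT-H via Theorem~\ref{TH:bilinear_form_identical}, this is not a limitation here.
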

\begin{proof}
We set 
 \begin{equation*}
  \lambda = \begin{pmatrix} \lambda_1 \\ \lambda_2 \\ \lambda_3 \end{pmatrix}, \qquad m_\elem(\lambda) = \begin{pmatrix} m_\elem(\lambda_1) \\ m_\elem(\lambda_2) \\ m_\elem(\lambda_3) \end{pmatrix}, \qquad m_\elem(\lambda_i) = \tfrac{1}{|\partial \elem|} \int_{\partial \elem} \lambda_i \ds.
 \end{equation*}
 Furthermore, let $F_\elem = B_\elem \hat x + b$ be the affine mapping of the reference element $\hat \elem$ to $\elem$. We denote by $\hat \lambda$ the vector consisting of $\hat \lambda_i = \lambda_i \circ F_\elem$ for $i = 1, 2, 3$.
 
 From \cite[Thm.\ 7.1]{DemkowiczGS09} we know that for each $\hat \lambda_i$, there is a $\hat g_i$ such that
 \begin{equation}\label{EQ:property_ref}
  \Div \hat g_i = 0 \qquad \text{ and } \qquad \hat g_i \cdot \hat \Nu = \hat \lambda_i - m_{\hat \elem} (\hat \lambda_i),
 \end{equation}
 and that $\hat g_i$ satisfies
 \begin{equation}\label{EQ:ref_bound}
  \| \hat g_i \|_{0,\hat \elem} \lesssim \nnorm \hat \lambda_i - m_{\hat \elem} (\hat \lambda_i) \nnorm_{\partial\hat \elem}.
 \end{equation}
 Next, we set $g_i = \tfrac{1}{\det(B_\elem)} B_\elem \hat g_i \circ F^{-1}_\elem$. Then, using \cite[Lem.\ 3.59 \& (3.81)]{Monk03} we have
 \begin{equation}
  \Div g_i = 0 \qquad \text{ and } \qquad \int_{\partial \elem} g_i \cdot \Nu \lambda_i \ds = \operatorname{sign}(
  \det(B_\elem)) \int_{\partial \hat \elem} \hat g_i \cdot \hat \Nu \hat \lambda_i \; \textup d\hat\sigma.
 \end{equation}
 Setting $G = (g_1, g_2, g_3)^\transposed$ in \eqref{EQ:local_sol_gradu}, we receive
 \begin{align}
  (\gradU^\timeStep_\level \lambda, G)_\elem & = \llangle \lambda, G \Nu \rrangle_\elem = \sum_{i=1}^3 \llangle \lambda_i, g_i \Nu \rrangle_{\partial \elem} \\
  & = \operatorname{sign}(\det(B_\elem)) \sum_{i=1}^3 \llangle \hat \lambda_i, \hat g_i \hat \Nu \rrangle_{\partial \hat \elem} \\
  & \overset{\eqref{EQ:property_ref}}{=} \operatorname{sign}(\det (B_\elem)) \llangle \hat \lambda_i - m_{\hat \elem}(\hat \lambda_i), \hat \lambda_i - m_{\hat \elem} (\hat \lambda_i) \rrangle_{\partial \hat \elem} \\
  & = \operatorname{sign}(\det (B_\elem)) \nnorm \hat \lambda - m_{\hat \elem} (\hat \lambda) \nnorm_{\partial \hat \elem}^2,
 \end{align}
 which means that
 \begin{equation}
  \nnorm \hat \lambda - m_{\hat \elem}(\hat \lambda) \nnorm^2_{\partial \hat \elem} \le \| \gradU^\timeStep_\level \lambda \|_{0,\elem} \| G \|_{0,\elem} \overset{\eqref{EQ:ref_bound}}{\underset{\text{scaling}}\lesssim} h^{-1/2}_\level \| \gradU^\timeStep_\level \lambda \|_{0,\elem} \nnorm  \hat \lambda - m_{\hat \elem} (\hat \lambda) \nnorm_{\partial \hat \elem}.
 \end{equation}
 Using this observation and that $m_\elem(\lambda_i)$ is the best approximation of $\lambda$ on $\partial \elem$ by a constant ($L^2$ order of convergence is one), we immediately have that
 \begin{equation}\label{EQ:approx_bound}
  \nnorm \lambda - m_\elem (\lambda) \nnorm_{\level,\partial \elem} \lesssim  h_\level \nnorm \hat \lambda - m_{\hat \elem}(\hat \lambda) \nnorm_{\partial \hat \elem} \lesssim h^{1/2}_\level \| \gradU^\timeStep_\level \lambda \|_{0,\elem}.
 \end{equation}
 Using \eqref{EQ:constant}, \eqref{EQ:stability_u} and the above inequality, we have that
 \begin{align}
  \| u^\timeStep_\level \lambda - \lambda \|_\level & = \| u^\timeStep_\level (\lambda- m_\elem (\lambda)) - (\lambda - m_\elem (\lambda))\|_\level\\
  & \lesssim \sqrt{1+\timeStep} \|\lambda - m_\elem (\lambda)\|_\level \\
  & \lesssim h_\level \sqrt{1+\timeStep} \| \gradU^\timeStep_\level \lambda \|_0.
 \end{align}
\end{proof}

\begin{lemma}\label{LEM:equiv_bilinear}
 For all $\lambda \in \skeletalSpace_\level$, we have
 \begin{equation*}
  \| \gradU^\timeStep_\level \lambda - \nabla u^\timeStep_\level \lambda \|_0 + \| \tfrac{1}{\timeStep} p^\timeStep_\level \lambda + \Div u^\timeStep_\level \lambda \|_0 \lesssim h^{-1}_\level \| u^\timeStep_\level \lambda - \lambda \|_\level.
 \end{equation*}
\end{lemma}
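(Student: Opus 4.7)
The plan is to rewrite both quantities on the left-hand side through an element-local integration by parts on the equations defining the local solver, and then to test against the quantities themselves. The key structural observation (in the RT-H setting the authors restrict to) is that $\nabla u^\timeStep_\level \lambda$ lies in $W_\elem$ (since $u^\timeStep_\level \lambda \in [\polynomials_p(\elem)]^3$ gives $\nabla u^\timeStep_\level \lambda \in [\polynomials_{p-1}(\elem)]^{3\times 3}$, which is contained in $[RT_p(\elem)]^3$) and $\Div u^\timeStep_\level \lambda \in \polynomials_{p-1}(\elem) \subset Q_\elem$. Hence these objects can serve as legitimate test functions.

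First, I would apply integration by parts to \eqref{EQ:local_sol_gradu}: since $(u^\timeStep_\level \lambda, \Div G)_\elem = -(\nabla u^\timeStep_\level \lambda, G)_\elem + \int_{\partial \elem} u^\timeStep_\level \lambda \cdot G \Nu \ds$, the equation rewrites as
\begin{equation*}
 (\gradU^\timeStep_\level \lambda - \nabla u^\timeStep_\level \lambda, G)_\elem = \llangle \lambda - u^\timeStep_\level \lambda, G \Nu \rrangle_{\partial \elem} \qquad \forall G \in W_\elem.
\end{equation*}
Testing with $G = \gradU^\timeStep_\level \lambda - \nabla u^\timeStep_\level \lambda$ (permitted by the observation above), Cauchy--Schwarz on the boundary form yields $\| G \|^2_{0,\elem} \le \nnorm \lambda - u^\timeStep_\level \lambda \nnorm_{\partial\elem} \nnorm G \Nu \nnorm_{\partial\elem}$. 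The trace inverse inequality gives $\nnorm G \Nu \nnorm_{\partial\elem} \lesssim h_\level^{-1/2} \| G \|_{0,\elem}$, and the equivalence between $\nnorm \cdot \nnorm_{\partial\elem}$ and $\| \cdot \|_{\level,\partial\elem}$ derived from the mesh regularity gives $\nnorm \lambda - u^\timeStep_\level \lambda \nnorm_{\partial\elem} \lesssim h_\level^{-1/2} \| \lambda - u^\timeStep_\level \lambda \|_{\level,\partial\elem}$. Dividing by $\| G \|_{0,\elem}$ and summing the squares over $\elem \in \mesh_\level$ produces the desired bound on the first term.

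Second, I would perform the analogous manipulation on \eqref{EQ:local_sol_bal}. Integration by parts in the term $(u^\timeStep_\level \lambda, \nabla q)_\elem$ turns the identity into
\begin{equation*}
 \bigl(\tfrac{1}{\timeStep} p^\timeStep_\level \lambda + \Div u^\timeStep_\level \lambda, q\bigr)_\elem = \llangle (u^\timeStep_\level \lambda - \lambda) \cdot \Nu, q \rrangle_{\partial \elem} \qquad \forall q \in Q_\elem.
\end{equation*}
Testing with $q = \tfrac{1}{\timeStep} p^\timeStep_\level \lambda + \Div u^\timeStep_\level \lambda$ (which indeed lies in $Q_\elem = \polynomials_p(\elem)$) and applying the exact same trace/scaling inequalities yields the matching bound $\| \tfrac{1}{\timeStep} p^\timeStep_\level \lambda + \Div u^\timeStep_\level \lambda \|_{0,\elem} \lesssim h_\level^{-1} \| u^\timeStep_\level \lambda - \lambda \|_{\level,\partial\elem}$, after which summation over elements concludes the proof of the combined estimate.

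The only subtle point is the verification that $\nabla u^\timeStep_\level \lambda \in W_\elem$ and $\Div u^\timeStep_\level \lambda \in Q_\elem$, so that the test functions are admissible; once this is in place, no residual boundary terms remain and the proof reduces to a single application of the trace inverse inequality with the mesh-regularity scaling between $\nnorm\cdot\nnorm$ and $\|\cdot\|_\level$. Note in particular that the $\timeStep$-dependence disappears because $\stab = 0$ in the RT-H case and the two contributions (gradient and pressure-divergence) decouple completely once integration by parts has eliminated the bulk coupling.
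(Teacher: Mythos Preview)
Your proposal is correct and follows essentially the same argument as the paper: integrate by parts in \eqref{EQ:local_sol_gradu} and \eqref{EQ:local_sol_bal}, test against the quantities themselves, and apply the trace inverse inequality together with the scaling between $\nnorm\cdot\nnorm$ and $\|\cdot\|_\level$. Your explicit verification that $\nabla u^\timeStep_\level\lambda\in W_\elem$ and $\Div u^\timeStep_\level\lambda\in Q_\elem$ in the RT-H setting is a useful detail that the paper leaves implicit.
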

\begin{proof}
 Using \eqref{EQ:local_sol_gradu} and Green's formula, we have
 \begin{equation}
  (\gradU^\timeStep_\level \lambda - \nabla u^\timeStep_\level \lambda, G)_\elem = \llangle \lambda - u^\timeStep_\level \lambda, G \Nu \rrangle_{\partial \elem}.
 \end{equation}
 Choosing $G = \gradU^\timeStep_\level \lambda - \nabla u^\timeStep_\level \lambda$ and using the trace inequality, we get
 \begin{equation}
  \| \gradU^\timeStep_\level \lambda - \nabla u^\timeStep_\level \lambda \|^2_{0,\elem} \lesssim h^{-1/2}_\level \nnorm \lambda - u^\timeStep_\level \lambda \nnorm_{\level,\partial\elem} \| \gradU^\timeStep_\level \lambda - \nabla u^\timeStep_\level \lambda \|_{0,\elem},
 \end{equation}
 i.e.,
 \begin{equation}
  \| \gradU^\timeStep_\level \lambda - \nabla u^\timeStep_\level \lambda \|_{0,\elem} \lesssim h^{-1/2}_\level \nnorm \lambda - u^\timeStep_\level \lambda \nnorm_{\level,\partial\elem} \lesssim h^{-1}_\level \| \lambda - u^\timeStep_\level \lambda \|_{\level,\partial\elem}.
 \end{equation}

 Analogously, using \eqref{EQ:local_sol_bal} we have
 \begin{equation}
  ( \tfrac{1}{\timeStep} p^\timeStep_\level \lambda + \Div u^\timeStep_\level \lambda, q )_\elem = \llangle u^\timeStep_\level \lambda - \lambda, q \cdot \Nu \rrangle_{\partial \elem}.
 \end{equation}
 Setting $q = \tfrac{1}{\timeStep} p^\timeStep_\level \lambda + \Div u^\timeStep_\level \lambda$, we have
 \begin{equation}
  \| \tfrac{1}{\timeStep} p^\timeStep_\level \lambda + \Div u^\timeStep_\level \lambda \|_{0,\elem} \lesssim h^{-1}_\level \| \lambda - u^\timeStep_\level \lambda \|_{\level,\partial\elem},
 \end{equation}
 which concludes the prove after having been summed over all elements.
\end{proof}

Let us define the averaging operator
\begin{gather*}
  \avgOp_\level\colon\discElementSpace_\level \to \linElementSpace_\level,
\end{gather*}
with $\linElementSpace_\level$ as described in \eqref{EQ:lin_elem_def}. This operator first sets the values in all non-boundary vertices $\vec x \not \in \partial \Omega$ according to
\begin{equation*}
  \left[\avgOp_\level u\right] (\vec x)
  = \frac1{n_{\vec x}}\sum_{i=1}^{n_{\vec x}} u_{|\elem_i}(\vec x).
\end{equation*}
Here, $n_{\vec x}$ is the number of cells $\elem_i$ meeting in vertex $\vec x$ and $u_{|\elem_i}$ is the restriction of a function $u\in \discElementSpace_\level$ to cell $\elem_i$, which is single valued at $\vec x$. For $\vec x\in \partial\Omega$, we let $\left[\avgOp_\level u\right] (\vec x) = 0$. Afterwards, it uses the canonical interpolation operator $\linearInterpolation$ into linear finite elements to turn the `averaged' vertex values into a linear finite element function. Following the proofs of Lemmas 5.2 and 5.3 in \cite{LuRK21}, we can get the following corresponding two Lemmas.
\begin{lemma}\label{LEM:bound_by_local_l}
 For all $\lambda \in \skeletalSpace_\level$, we have
 \begin{align*}
  | \avgOp_\level u^\timeStep_\level \lambda |_1 & \lesssim \sqrt{1 + \timeStep} \| \gradU^\timeStep_\level \lambda \|_0, \\
  \| \avgOp_\level u^\timeStep_\level \lambda - u^\timeStep_\level \lambda \|_0 & \lesssim \sqrt{1 + \timeStep} h_\level \| \gradU^\timeStep_\level \lambda \|_0.
 \end{align*}
\end{lemma}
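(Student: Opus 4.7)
The plan is to follow the arguments of \cite[Lems.\ 5.2 and 5.3]{LuRK21} applied componentwise, since $\avgOp_\level$ acts on vectors component by component. The essential ingredients are a Karakashian--Pascal--Brenner type estimate for the averaging operator, plus control of $\| u^\timeStep_\level \lambda - \lambda \|_\level$ already supplied by Lemma \ref{LEM:trace_diff} and, for the $H^1$ bound only, a triangle inequality against $\gradU^\timeStep_\level \lambda$ via Lemma \ref{LEM:equiv_bilinear}.

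Concretely, for any $v \in \discElementSpace_\level$ the standard averaging estimate reads
\begin{gather*}
  \| \avgOp_\level v - v \|_0^2 \lesssim \sum_{\face \in \faceSet_\level} h_\face \int_\face |\jump{v}|^2 \ds, \\
  | \avgOp_\level v |_1^2 \lesssim \| \nabla v \|_{0,\mesh_\level}^2 + \sum_{\face \in \faceSet_\level} h_\face^{-1} \int_\face |\jump{v}|^2 \ds,
\end{gather*}
where on a Dirichlet face the jump is the trace itself---which is consistent both with the vanishing of $\avgOp_\level v$ at boundary vertices and with the Dirichlet constraint in $\skeletalSpace_\level$. Setting $v = u^\timeStep_\level \lambda$ and using that $\lambda$ is single-valued on interior faces and zero on Dirichlet faces yields
\begin{equation*}
  \jump{u^\timeStep_\level \lambda}|_\face = (u^\timeStep_\level \lambda |_{\elem^+} - \lambda) - (u^\timeStep_\level \lambda |_{\elem^-} - \lambda),
\end{equation*}
so the face sums are controlled by $\| u^\timeStep_\level \lambda - \lambda \|_\level^2$ (with weight $h_\face$) or by $h_\level^{-2} \| u^\timeStep_\level \lambda - \lambda \|_\level^2$ (with weight $h_\face^{-1}$). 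Lemma \ref{LEM:trace_diff} then bounds these by $h_\level^2 (1 + \timeStep) \| \gradU^\timeStep_\level \lambda \|_0^2$ and $(1 + \timeStep) \| \gradU^\timeStep_\level \lambda \|_0^2$, respectively. The first already gives the second claim of the lemma.

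For the $H^1$-seminorm estimate it remains to handle the broken gradient $\| \nabla u^\timeStep_\level \lambda \|_{0,\mesh_\level}$. By the triangle inequality combined with Lemmas \ref{LEM:equiv_bilinear} and \ref{LEM:trace_diff},
\begin{equation*}
  \| \nabla u^\timeStep_\level \lambda \|_{0,\mesh_\level} \le \| \gradU^\timeStep_\level \lambda \|_0 + \| \gradU^\timeStep_\level \lambda - \nabla u^\timeStep_\level \lambda \|_0 \lesssim \| \gradU^\timeStep_\level \lambda \|_0 + h_\level^{-1} \| u^\timeStep_\level \lambda - \lambda \|_\level \lesssim \sqrt{1+\timeStep} \| \gradU^\timeStep_\level \lambda \|_0,
\end{equation*}
and combining this with the face-sum bound closes the first claim. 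The only genuinely delicate point is verifying that the scalar averaging estimate of \cite{LuRK21} carries over to the piecewise $[\polynomials_p]^3$ setting with the imposed Dirichlet vertex convention; this is a componentwise reduction based on a Scott--Zhang type argument, and compatibility with the homogeneous Dirichlet trace encoded in $\skeletalSpace_\level$ ensures that the Dirichlet-face contributions land correctly in the jump sums above.
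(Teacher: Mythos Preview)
Your proposal is correct and follows precisely the route the paper indicates: it defers to the proofs of Lemmas 5.2 and 5.3 in \cite{LuRK21}, applied componentwise, with Lemmas \ref{LEM:trace_diff} and \ref{LEM:equiv_bilinear} supplying the Stokes-specific bounds on $\| u^\timeStep_\level \lambda - \lambda \|_\level$ and $\| \gradU^\timeStep_\level \lambda - \nabla u^\timeStep_\level \lambda \|_0$.
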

\begin{lemma}\label{LEM:diff_bound_by_l}
 For all $\lambda \in \skeletalSpace_\level$, we have
 \begin{equation*}
  \| \lambda - \gamma_\level \avgOp_\level u^\timeStep_\level \level \|_\level \lesssim h_\level \sqrt{1 + \timeStep} \| \gradU^\timeStep_\level \lambda \|_0.
 \end{equation*}
\end{lemma}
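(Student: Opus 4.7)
The plan is to insert $u^\timeStep_\level \lambda$ as an intermediate term and use the triangle inequality
\[
 \| \lambda - \gamma_\level \avgOp_\level u^\timeStep_\level \lambda \|_\level \le \| \lambda - u^\timeStep_\level \lambda \|_\level + \| u^\timeStep_\level \lambda - \gamma_\level \avgOp_\level u^\timeStep_\level \lambda \|_\level,
\]
so that each summand can be controlled by a result established earlier in the excerpt. The desired bound then follows by combining the two estimates.

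For the first summand, Lemma \ref{LEM:trace_diff} directly gives
\[
 \| \lambda - u^\timeStep_\level \lambda \|_\level \lesssim h_\level \sqrt{1 + \timeStep} \| \gradU^\timeStep_\level \lambda \|_0,
\]
which is exactly of the form claimed. No further argument is needed here.

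For the second summand, the plan is to use a standard discrete trace inequality: for any piecewise polynomial $v$ on $\mesh_\level$ one has $h_\elem \| v \|_{L^2(\partial \elem)}^2 \lesssim \| v \|_{L^2(\elem)}^2$, which together with $h_\face \cong h_\elem$ and the definition of $\| \cdot \|_\level$ implies $\| v \|_\level \lesssim \| v \|_0$ uniformly in $\level$. Applied to $v = u^\timeStep_\level \lambda - \avgOp_\level u^\timeStep_\level \lambda$ (noting that $\gamma_\level \avgOp_\level u^\timeStep_\level \lambda$ is single-valued on interior faces since $\avgOp_\level u^\timeStep_\level \lambda \in \linElementSpace_\level \subset [C(\Omega)]^3$, so $\gamma_\level$ simply restricts it to the skeleton), this yields
\[
 \| u^\timeStep_\level \lambda - \gamma_\level \avgOp_\level u^\timeStep_\level \lambda \|_\level \lesssim \| u^\timeStep_\level \lambda - \avgOp_\level u^\timeStep_\level \lambda \|_0.
\]
The right-hand side is then controlled by the second inequality of Lemma \ref{LEM:bound_by_local_l}, giving $\sqrt{1 + \timeStep}\, h_\level \| \gradU^\timeStep_\level \lambda \|_0$. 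Adding the two contributions and invoking the triangle inequality concludes the proof.

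I do not anticipate a genuine obstacle here: both of the ``hard'' ingredients (Lemmas \ref{LEM:trace_diff} and \ref{LEM:bound_by_local_l}) are already available, and the only remaining step is a standard element-wise trace/inverse estimate. The only mild point of care is keeping track of the double counting of interior faces in $\| \cdot \|_\level$, which is automatically absorbed into the constant because the discrete trace inequality is applied element by element before summing.
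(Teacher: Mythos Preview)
Your proof is correct and follows the natural route: split via the triangle inequality, control $\|\lambda - u^\timeStep_\level\lambda\|_\level$ by Lemma~\ref{LEM:trace_diff}, and control $\|u^\timeStep_\level\lambda - \gamma_\level\avgOp_\level u^\timeStep_\level\lambda\|_\level$ by passing to the bulk $L^2$ norm via a discrete trace inequality and then invoking the second estimate of Lemma~\ref{LEM:bound_by_local_l}. The paper itself does not spell out a proof but simply refers to the argument for Lemma~5.3 in \cite{LuRK21}; your argument is precisely the expected one in that spirit, so there is nothing to correct or compare.
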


\begin{lemma}\label{LEM:norm_equiv_s}
 For all $\lambda \in \skeletalSpace_\level$, we have the following norm equivalence:
 \begin{equation*}
  \| \gradU^\timeStep_\level \lambda \|_{0} \lesssim \| \nabla \liftingOp^\timeStep_\level \lambda \|_{0} \lesssim \sqrt{1 + \timeStep} \| \gradU^\timeStep_\level \lambda \|_{0}.
 \end{equation*}
\end{lemma}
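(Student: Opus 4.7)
The plan is to establish both inequalities by exploiting the structure of the local solver equations together with the moment--matching conditions in the definition \eqref{EQ:lifting_def} of $\liftingOp^\timeStep_\level$.

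For the left inequality, I would first derive an elementwise orthogonality relation between $\gradU^\timeStep_\level \lambda$ and $\nabla \liftingOp^\timeStep_\level \lambda$. Applying Green's formula to $\liftingOp^\timeStep_\level \lambda \in [\polynomials_{p+4}(\elem)]^3$ (which is continuous across element interfaces, so no interior jumps appear) and subtracting the result from \eqref{EQ:local_sol_gradu}, I obtain for every $G \in W_\elem$
\[
 (\gradU^\timeStep_\level \lambda - \nabla \liftingOp^\timeStep_\level \lambda, G)_\elem + (u^\timeStep_\level \lambda - \liftingOp^\timeStep_\level \lambda, \Div G)_\elem = \llangle \lambda - \liftingOp^\timeStep_\level \lambda, G \Nu \rrangle_{\partial\elem}.
\]
Since for $G$ in the Raviart--Thomas space $W_\elem$ we have $\Div G \in [\polynomials_p(\elem)]^3$ and $G\Nu|_\face \in [\polynomials_p(\face)]^3 \subset [\polynomials_{p+1}(\face)]^3$, the defining moment conditions \eqref{EQ:lifting_def} force both remaining terms to vanish, so that $(\gradU^\timeStep_\level \lambda - \nabla \liftingOp^\timeStep_\level \lambda, G)_\elem = 0$ for every $G \in W_\elem$. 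This identifies $\gradU^\timeStep_\level \lambda$ as the elementwise $L^2$--projection of $\nabla \liftingOp^\timeStep_\level \lambda$ onto $W_\elem$, and testing with $G = \gradU^\timeStep_\level \lambda$ together with Cauchy--Schwarz and summation over $\elem \in \mesh_\level$ yields the left inequality.

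For the right inequality, I would split off a linear finite element contribution: set $w_\level := \avgOp_\level u^\timeStep_\level \lambda \in \linElementSpace_\level$ and $\tilde \lambda := \lambda - \gamma_\level w_\level \in \skeletalSpace_\level$. By \eqref{EQ:linear} and linearity of the local solvers, $\gradU^\timeStep_\level \gamma_\level w_\level = \nabla w_\level$ and $u^\timeStep_\level \tilde\lambda = u^\timeStep_\level \lambda - w_\level$, so that $\nabla \liftingOp^\timeStep_\level \lambda = \nabla w_\level + \nabla \liftingOp^\timeStep_\level \tilde\lambda$. The first summand satisfies $\|\nabla w_\level\|_0 \lesssim \sqrt{1+\timeStep}\,\|\gradU^\timeStep_\level \lambda\|_0$ by Lemma \ref{LEM:bound_by_local_l}. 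For the second summand, an elementwise inverse inequality on $\liftingOp^\timeStep_\level \tilde\lambda \in [\polynomials_{p+4}(\elem)]^3$ reduces the task to bounding $\|\liftingOp^\timeStep_\level \tilde\lambda\|_{0,\elem}$, and a standard scaling argument based on the moment conditions \eqref{EQ:lifting_def} yields
\[
 \|\liftingOp^\timeStep_\level \tilde\lambda\|_{0,\elem}^2 \lesssim \|u^\timeStep_\level \tilde\lambda\|_{0,\elem}^2 + \|\tilde\lambda\|_{\level,\partial\elem}^2,
\]
where edge and vertex contributions are absorbed into the face term under the regularity of the mesh. Using Lemma \ref{LEM:bound_by_local_l} for $\|u^\timeStep_\level \lambda - w_\level\|_0$ and Lemma \ref{LEM:diff_bound_by_l} for $\|\tilde\lambda\|_\level$, summing over $\elem \in \mesh_\level$, and applying the inverse inequality leads to $\|\nabla \liftingOp^\timeStep_\level \tilde\lambda\|_0 \lesssim \sqrt{1+\timeStep}\,\|\gradU^\timeStep_\level \lambda\|_0$, which combined with the bound on $\|\nabla w_\level\|_0$ gives the right inequality.

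The main obstacle I expect is the scaling estimate for $\|\liftingOp^\timeStep_\level \tilde\lambda\|_{0,\elem}$ in terms of its defining moments with the correct power of $h_\level$: the space $[\polynomials_{p+4}(\elem)]^3$ is finite dimensional, but the volume, face, edge, and vertex degrees of freedom scale differently under an affine map to a reference element, and one has to verify that the lower--dimensional contributions are dominated by the face and interior moments. Crucially, one cannot apply the existing bound \eqref{EQ:norm_equiv} to $\tilde\lambda$ as a black box, because combining it with Lemma \ref{LEM:diff_bound_by_l} produces the suboptimal factor $(1+\timeStep)$; the tighter bound on $\|u^\timeStep_\level \tilde\lambda\|_0 = \|u^\timeStep_\level \lambda - w_\level\|_0$ coming directly from Lemma \ref{LEM:bound_by_local_l} is essential for restoring the claimed $\sqrt{1+\timeStep}$.
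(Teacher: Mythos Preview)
Your proof is correct and follows essentially the same route as the paper. For the left inequality, both you and the paper show that $\gradU^\timeStep_\level \lambda$ is the elementwise $L^2$ projection of $\nabla \liftingOp^\timeStep_\level \lambda$; the paper tests with $G \in [\polynomials_p(\elem)]^{3\times 3}$ and substitutes the moment conditions of $\liftingOp^\timeStep_\level$ directly into \eqref{EQ:local_sol_gradu}, while you arrive at the same identity via Green's formula and the Raviart--Thomas test space. For the right inequality, the paper merely cites \cite[(5.14)]{LuRK21} together with Lemmas \ref{LEM:bound_by_local_l} and \ref{LEM:diff_bound_by_l}; your splitting $\lambda = \gamma_\level w_\level + \tilde\lambda$ with $w_\level = \avgOp_\level u^\timeStep_\level \lambda$, the inverse inequality on $\liftingOp^\timeStep_\level \tilde\lambda$, and the scaling bound for $\|\liftingOp^\timeStep_\level \tilde\lambda\|_{0,\elem}$ are exactly how that reference proceeds. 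Your closing remark on the constant is a genuine point: applying \eqref{EQ:norm_equiv} to $\tilde\lambda$ as a black box would yield $(1+\timeStep)$, and one indeed needs the direct bound on $\|u^\timeStep_\level \tilde\lambda\|_0$ from Lemma \ref{LEM:bound_by_local_l} to recover $\sqrt{1+\timeStep}$.
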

\begin{proof}
 From the definition of $\liftingOp^\timeStep_\level \lambda$ and \eqref{EQ:local_sol_gradu}, we have
 \begin{equation}
  (\gradU^\timeStep_\level \lambda, G)_\elem = - (\liftingOp^\timeStep_\level \lambda, \Div G)_\elem + \llangle \liftingOp^\timeStep_\level \lambda, G \Nu \rrangle_{\partial \elem} = (\nabla \liftingOp^\timeStep_\level \lambda, G)_\elem
 \end{equation}
 for all $G \in [\polynomials_p(\elem)]^{3 \times 3}$. This implies that $\gradU^\timeStep_\level \lambda = \Pi_\level \nabla \liftingOp^\timeStep_\level \lambda$, where $\Pi_\level$ is the $L^2$ projection to $[\polynomials_p(\elem)]^{3 \times 3}$, which gives the first inequality. With the above Lemmas, similar to the proof of \cite[(5.14)]{LuRK21}, we can prove the second inequality.
\end{proof}

\begin{lemma}\label{LEM:eigenvalue_bound}
 For all $\lambda \in \skeletalSpace_\level$, we have
 \begin{equation*}
  \| \lambda \|^2_\level \lesssim a^\timeStep_\level (\lambda, \lambda) \lesssim (1 + \timeStep) h^{-2}_\level \| \lambda \|^2_\level.
 \end{equation*}
\end{lemma}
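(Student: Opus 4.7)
The plan is to treat the two inequalities separately, exploiting the fact that the analysis has been restricted to RT-H so that $\stab = 0$ and the middle term of \eqref{EQ:bilinear} drops out, leaving
\begin{equation*}
 a^\timeStep_\level(\lambda,\lambda) = \| \gradU^\timeStep_\level \lambda \|^2_0 + \tfrac{1}{\timeStep} \| p^\timeStep_\level \lambda \|^2_0.
\end{equation*}

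The upper bound is immediate from the stability estimates already established in Lemma \ref{LEM:stability_local}. Squaring \eqref{EQ:stability_l} gives $\| \gradU^\timeStep_\level \lambda \|^2_0 \lesssim (1+\timeStep) h^{-2}_\level \| \lambda \|^2_\level$, and squaring \eqref{EQ:stability_p} followed by multiplication with $1/\timeStep$ produces exactly the same bound for $\tfrac{1}{\timeStep}\| p^\timeStep_\level \lambda \|^2_0$. Adding the two contributions yields $a^\timeStep_\level(\lambda,\lambda) \lesssim (1+\timeStep) h^{-2}_\level \| \lambda \|^2_\level$ and finishes this direction in one line.

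For the lower bound I would pass through the continuous lifting $\liftingOp^\timeStep_\level \lambda$ in three steps. First, the left half of \eqref{EQ:norm_equiv} gives $\| \lambda \|_\level \lesssim \| \liftingOp^\timeStep_\level \lambda \|_0$. Second, since $\lambda$ vanishes on the Dirichlet skeleton by definition of $\skeletalSpace_\level$ and the rules \eqref{EQ:lifting_def} force $\liftingOp^\timeStep_\level \lambda$ to inherit this zero boundary trace, a standard Poincaré inequality in $H^1_0(\Omega)$ (applied componentwise) gives $\| \liftingOp^\timeStep_\level \lambda \|_0 \lesssim \| \nabla \liftingOp^\timeStep_\level \lambda \|_0$. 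Third, the right half of Lemma \ref{LEM:norm_equiv_s} bounds $\| \nabla \liftingOp^\timeStep_\level \lambda \|_0 \lesssim \sqrt{1+\timeStep}\, \| \gradU^\timeStep_\level \lambda \|_0$. Chaining these three estimates and using that $\| \gradU^\timeStep_\level \lambda \|_0^2 \le a^\timeStep_\level(\lambda,\lambda)$ completes the proof, with the mild caveat that a $(1+\timeStep)$ constant is absorbed into the implied constant, in line with the fixed-$\timeStep$ regime that the rest of the paper operates in.

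An alternative route, which may be preferable because it mirrors the Poisson analysis of \cite{LuRK21}, would replace the lifting by the averaging operator: split $\| \lambda \|_\level \le \| \lambda - \gamma_\level \avgOp_\level u^\timeStep_\level \lambda \|_\level + \| \gamma_\level \avgOp_\level u^\timeStep_\level \lambda \|_\level$, bound the first summand directly by Lemma \ref{LEM:diff_bound_by_l}, and for the second combine a trace-type inverse estimate $\| \gamma_\level w \|_\level \lesssim \| w \|_0$ on $\linElementSpace_\level$ with Poincaré and Lemma \ref{LEM:bound_by_local_l}. The main obstacle in either route is the Poincaré step, i.e.\ verifying that the relevant continuous finite-element surrogate truly vanishes on $\partial\Omega$; the definitions \eqref{EQ:lifting_def} and of $\avgOp_\level$ are set up precisely so that this follows from $\lambda|_{\faceSetDir_\level} = 0$, but this is the one point where care is required, since without it there would be no global Poincaré constant available.
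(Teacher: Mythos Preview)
Your upper bound is exactly the paper's argument.

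For the lower bound the paper takes a shorter and sharper route than either of yours. It quotes the discrete skeletal Poincar\'e inequality \cite[(2.12)]{Gopalakrishnan03},
\[
 \| \lambda \|^2_\level \;\lesssim\; \sum_{\elem \in \mesh_\level} h^{-1}_\level \nnorm \lambda - m_\elem(\lambda) \nnorm^2_{\partial\elem},
\]
and then feeds in the mean--oscillation bound \eqref{EQ:approx_bound} that was established inside the proof of Lemma~\ref{LEM:trace_diff}, namely $\nnorm \lambda - m_\elem(\lambda) \nnorm_{\partial\elem} \lesssim h^{1/2}_\level \| \gradU^\timeStep_\level \lambda \|_{0,\elem}$. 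Chaining gives $\| \lambda \|^2_\level \lesssim \| \gradU^\timeStep_\level \lambda \|^2_0 \le a^\timeStep_\level(\lambda,\lambda)$ with a constant \emph{independent of} $\timeStep$, because \eqref{EQ:approx_bound} comes from the purely algebraic divergence-free lifting of \cite{DemkowiczGS09} and never touches the local solver stability.

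Both of your routes, by contrast, pass through Lemma~\ref{LEM:norm_equiv_s} or Lemmas~\ref{LEM:bound_by_local_l}--\ref{LEM:diff_bound_by_l}, each of which carries an intrinsic $\sqrt{1+\timeStep}$ factor. You therefore only reach $\| \lambda \|^2_\level \lesssim (1+\timeStep)\, a^\timeStep_\level(\lambda,\lambda)$, which is strictly weaker than the lemma as stated. You flag this yourself as a ``mild caveat''; for the multigrid analysis it is indeed harmless, since the proof of \eqref{EQ:a1} uses only the upper bound of the lemma, but the condition-number remark that follows would degrade to $\mathcal O\!\bigl((1+\timeStep)^2 h^{-2}_\level\bigr)$. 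So your argument is correct but does not quite deliver the stated bound; the missing ingredient is to exploit \eqref{EQ:approx_bound} directly rather than detouring through the $\timeStep$-dependent lifting estimates.
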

\begin{proof}
 The upper bound is a combination of \eqref{EQ:bilinear}, \eqref{EQ:stability_l}, and \eqref{EQ:stability_p}. For the lower bound, we observe that
 \begin{equation}
  \| \lambda \|^2_\level \lesssim \sum_{\elem \in \mesh_\level} \tfrac{1}{h_\level} \nnorm \lambda - m_\elem(\lambda) \nnorm^2_{\level,\partial\elem} \lesssim \| \gradU^\timeStep_\level \lambda \|^2_0 \lesssim a^\timeStep_\level (\lambda, \lambda),
 \end{equation}
 where the first inequality is \cite[(2.12)]{Gopalakrishnan03} and the second inequality is \eqref{EQ:approx_bound}.
\end{proof}

\begin{remark}
 Consequently, the spectral condition number of the stiffness matrix is $\mathcal O((1 + \timeStep) h^{-2}_\level)$, which is validated by the observations in the numerical results in \cite{NguyenPC10}.
\end{remark}

\begin{remark}\label{REM:ls}
 The properties that have been presented in this section correspond to (LS1) -- (LS6) in \cite{LuRK21} for the Stokes equation. Specifically, Lemma \ref{LEM:trace_diff} is (LS1), Lemma \ref{LEM:stability_local} is (LS2), Lemma \ref{LEM:equiv_bilinear} is (LS3), Lemma \ref{LEM:linear_constant} is (LS4), Lemma \ref{LEM:stability_local_f} and Theorem \ref{TH:error_auxiliary} play the role of (LS5), and Lemma \ref{LEM:eigenvalue_bound} is (LS6).
\end{remark}
% 
% --------------------------------------------------------------------------------------------------
\section{Error estimate of the auxiliary problem}
% --------------------------------------------------------------------------------------------------
% 
We consider the auxiliary problem
\begin{subequations}\label{EQ:ap}\begin{align}
 \gradU - \nabla u & = 0 && \text{ in } \Omega, \\
 -\Div \gradU + \nabla p & = f && \text{ in } \Omega, \\
 \tfrac{1}{\timeStep} p + \Div u & = 0 && \text{ in } \Omega, \\
 u & = 0 && \text{ on } \partial \Omega,
\end{align}\end{subequations}
for which we assume that the regularity assumption
\begin{subequations}\label{EQ:regularity_est}\begin{align}
 | \gradU |_{1,\Omega} + | u |_{2,\Omega} & \le C_\timeStep \| f \|_{0,\Omega}, \\
 |p|_{1,\Omega} = \timeStep | \Div u |_{1,\Omega} & \le C_\timeStep \timeStep \| f \|_{0,\Omega}
\end{align}\end{subequations}
holds for some constant $C_\timeStep$ independent of $f$.

We denote the RT-H approximation of \eqref{EQ:ap} by $(\gradU^\timeStep_\level, u^\timeStep_\level, p^\timeStep_\level, \lambda^\timeStep_\level) \in W_\level \times V_\level \times Q_\level \times \skeletalSpace_\level$. That is, we assume that
\begin{subequations}\label{EQ:rth_ap}\begin{align}
 (\gradU^\timeStep_\level, G) + (u^\timeStep_\level, \Div G) - \llangle \lambda^\timeStep_\level, G \Nu \rrangle_{\level} & = 0, \\
 (- \Div \gradU^\timeStep_\level + \nabla p^\timeStep_\level, v) & = (f,v), \\
 \tfrac{1}{\timeStep} (p^\timeStep_\level, q) - (u^\timeStep_\level, q) + \llangle \lambda^\timeStep_\level, q \Nu \rrangle_{\level} & = 0, \\
 \llangle - \gradU^\timeStep_\level \Nu + p^\timeStep_\level \Nu, \mu \rrangle_{\level} & = 0
\end{align}\end{subequations}
for all $(G, v, q, \mu) \in W_\level \times V_\level \times Q_\level \times \skeletalSpace_\level$.

We will estimate the error of the approximate solution of \eqref{EQ:rth_ap} under the regularity assumption \eqref{EQ:regularity_est}.
\begin{theorem}\label{TH:error_auxiliary}
 We have the error bound
 \begin{equation*}
  \sqrt{\tfrac{1}{\timeStep}} \| p - p^\timeStep_\level \|_0 + \| \gradU - \gradU^\timeStep_\level \|_0 \lesssim C_\timeStep (1 + \timeStep) h_\level \| f \|_0.
 \end{equation*}
\end{theorem}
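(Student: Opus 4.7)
The plan is to follow the classical projection-based HDG error analysis, adapted to the augmented-Lagrangian formulation. The first ingredient I would set up is a tailored projection $(\Pi^W,\Pi^V,\Pi^Q,\Pi^\partial)$ onto $W_\level\times V_\level\times Q_\level\times \skeletalSpace_\level$: row-wise the canonical Raviart--Thomas interpolant for $\Pi^W$, the elementwise $L^2$ projections for $\Pi^V$ and $\Pi^Q$, and the facewise $L^2$ projection for $\Pi^\partial$. The commuting property of the RT interpolant gives $\Div \Pi^W\gradU = \Pi^V\Div \gradU$ on elements and $(\Pi^W\gradU)\Nu = \Pi^\partial(\gradU\Nu)$ on faces, while standard approximation theory yields $\|\gradU-\Pi^W\gradU\|_0\lesssim h_\level|\gradU|_1$, $\|u-\Pi^V u\|_0\lesssim h_\level^2|u|_2$, and $\|p-\Pi^Q p\|_0\lesssim h_\level|p|_1$. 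Combined with the regularity assumption \eqref{EQ:regularity_est}, these translate directly into $\|\gradU-\Pi^W\gradU\|_0\lesssim C_\timeStep h_\level\|f\|_0$ and $\|p-\Pi^Q p\|_0\lesssim C_\timeStep\timeStep h_\level\|f\|_0$.

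Next, I would plug $(\gradU,u,p,u|_\skeleton)$ into the weak form \eqref{EQ:rth_ap} cell-by-cell, using that $\gradU\Nu-p\Nu$ is single-valued across interior faces and that $u=0$ on $\partial\Omega$. Subtracting \eqref{EQ:rth_ap} and inserting the projections produces an error system for
\begin{equation*}
 e_\gradU := \Pi^W\gradU-\gradU^\timeStep_\level, \quad e_u:=\Pi^V u-u^\timeStep_\level, \quad e_p:=\Pi^Q p-p^\timeStep_\level, \quad e_\lambda:=\Pi^\partial u|_\skeleton-\lambda^\timeStep_\level,
\end{equation*}
whose right-hand side collapses, thanks to the orthogonality of the projections and to $\stab=0$ for RT-H, to residual terms of the form $(p-\Pi^Q p,\Div v)$ and $(\Pi^W\gradU-\gradU,G)$. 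Testing with $(G,v,q,\mu)=(e_\gradU,e_u,e_p,e_\lambda)$ and summing, the boundary couplings between $e_u$ and $e_\lambda$ cancel in the same way as in the continuous inf--sup structure, leaving an identity of the form
\begin{equation*}
 \|e_\gradU\|_0^2+\tfrac{1}{\timeStep}\|e_p\|_0^2 \;=\; (p-\Pi^Q p,\Div e_u)+(\gradU-\Pi^W\gradU,e_\gradU).
\end{equation*}
Since $\Div e_u\in Q_\level$, I can replace $p$ by $\Pi^Q p$ in the first summand, making it vanish; Young's inequality then absorbs $e_\gradU$ on the left, yielding
\begin{equation*}
 \|e_\gradU\|_0^2+\tfrac{1}{\timeStep}\|e_p\|_0^2 \lesssim \|\gradU-\Pi^W\gradU\|_0^2 \lesssim C_\timeStep^2 h_\level^2 \|f\|_0^2.
\end{equation*}

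Finally, I would close the argument by a triangle inequality combined with the projection bounds: $\|\gradU-\gradU^\timeStep_\level\|_0 \le \|e_\gradU\|_0+\|\gradU-\Pi^W\gradU\|_0\lesssim C_\timeStep h_\level\|f\|_0$, and $\sqrt{1/\timeStep}\|p-p^\timeStep_\level\|_0 \le \sqrt{1/\timeStep}(\|e_p\|_0+\|p-\Pi^Q p\|_0)\lesssim C_\timeStep(1+\sqrt{\timeStep})h_\level\|f\|_0$, both bounded by $C_\timeStep(1+\timeStep)h_\level\|f\|_0$. The main obstacle I anticipate is the bookkeeping of the $\timeStep$-dependence: because $|\gradU|_1$ is uniform in $\timeStep$ while $|p|_1$ grows linearly with $\timeStep$, I will need to apply the stability estimates of Lemmas \ref{LEM:stability_local} and \ref{LEM:stability_local_f} with care so that no spurious $\sqrt{\timeStep}$ factor sneaks in when bounding the mixed cross terms arising from the pressure equation, and to verify that the cancellation of face integrals between $e_u$ and $e_\lambda$ really is exact rather than approximate.
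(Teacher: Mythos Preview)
Your overall projection-based strategy is the same as the paper's, and your choices of $\Pi^W,\Pi^V,\Pi^Q,\Pi^\partial$ match theirs. The gap is in the energy identity you claim. When you write
\[
 \|e_\gradU\|_0^2+\tfrac{1}{\timeStep}\|e_p\|_0^2 \;=\;(p-\Pi^Q p,\Div e_u)+(\gradU-\Pi^W\gradU,e_\gradU)
\]
and then kill the first term because $\Div e_u\in Q_\level$, you have lost a nonzero consistency contribution. Inserting the projections into the \emph{second} discrete equation actually gives the face residual $\llangle \Pi^\partial_\level p-\Pi_\level p,\,v\cdot\Nu\rrangle_{\partial\elem}$ (and a matching term appears in the transmission equation), so the correct energy identity is
\[
 \|e_\gradU\|_0^2+\tfrac{1}{\timeStep}\|e_p\|_0^2
 =(\Pi^W\gradU-\gradU,e_\gradU)
 -\llangle(\Pi^\partial_\level p-\Pi_\level p)\Nu,\,e_u-e_\lambda\rrangle_\level .
\]
The structural face couplings between $e_u$ and $e_\lambda$ do cancel, but this pressure residual on the skeleton does \emph{not}; it is not equivalent to a bulk term $(p-\Pi^Q p,\Div e_u)$, and there is nothing that makes it vanish.

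To close the argument you therefore need a bound on $\nnorm e_u-e_\lambda\nnorm_\level$. This is the step the paper supplies and your plan is missing: one constructs, on each $\elem$, a $G\in W_\elem$ with $G\Nu=e_u-e_\lambda$ on $\partial\elem$, $(G,v)_\elem=0$ for $v\in[\polynomials_{p-1}(\elem)]^{3\times3}$, and $\|G\|_{0,\elem}\lesssim h_\level^{1/2}\nnorm e_u-e_\lambda\nnorm_{\partial\elem}$, and tests the first error equation with it to obtain $\nnorm e_u-e_\lambda\nnorm_\level\lesssim h_\level^{1/2}(\|\Pi^W\gradU-\gradU\|_0+\|e_\gradU\|_0)$. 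Only then can the face pressure term be absorbed (together with $\nnorm\Pi^\partial_\level p-\Pi_\level p\nnorm_\level\lesssim h_\level^{1/2}|p|_1$) via Young's inequality. This is precisely the ``main obstacle'', not the $\timeStep$-bookkeeping; once you add this ingredient your outline becomes the paper's proof.
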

\begin{proof}
 We denote the Raviart--Thomas projection to $W_\level$ by $\Pi^\textup{RT}_\level$. It is supposed to satisfy
 \begin{subequations}\begin{align}
  ( \Pi^\textup{RT}_\level \gradU_i , v) & = (\gradU_i, v) && \forall v \in [\polynomials_{p-1}(\elem)]^3, \\
  \langle \Pi^\textup{RT}_\level \gradU_i \cdot \Nu, \mu \rangle_\face & = \langle \gradU_i \cdot \Nu, \mu \rangle_\face && \forall \face \subset \partial \elem, \; \forall \mu \in \polynomials_p(\face)
 \end{align}\end{subequations}
 for $i = 1, 2, 3$. Here, 
 \begin{equation}
  \gradU = \begin{pmatrix} \gradU_1 \\ \gradU_2 \\ \gradU_3 \end{pmatrix} \qquad \text{ and } \qquad \Pi^\textup{RT}_\level \gradU = \begin{pmatrix} \Pi^\textup{RT}_\level \gradU_1 \\ \Pi^\textup{RT}_\level \gradU_2 \\ \Pi^\textup{RT}_\level \gradU_3 \end{pmatrix}.
 \end{equation}
 Operator $\Pi_\level$ denotes the $L^2$ projection to $Q_\level$ or $M_\level$, i.e.,
 \begin{align}
  (\Pi_\level p, v)_\elem & = (p, v)_\elem && \forall v \in \polynomials_p(\elem),\\
  (\Pi_\level u, v)_\elem & = (u, v)_\elem && \forall v \in [\polynomials_p(\elem)]^3,
 \end{align}
 and $\Pi^\partial_\level$ is the $L^2$ projection to $\skeletalSpace_\level$, i.e.,
 \begin{align}
  \llangle \Pi^\partial_\level u, \eta \rrangle_{\skeleton_\level} & = \llangle u, \eta \rrangle_{\skeleton_\level} && \forall \eta \in \skeletalSpace_\level
 \end{align}
 In this manuscript, we denote $\Pi_\level$ as the $L^2$ projections to $[\polynomials_p(\elem)]^{3 \times 3}$, $[\polynomials_p(\elem)]^3$ and $\polynomials_p(\elem)$. We made this slight abuse of notation to avoid introducing additional symbols. So does the  $L^2$ projections to the skeletons $\Pi^\partial_\level$.
 With these definitions, we have
 \begin{subequations}\label{EQ:proj_ap}\begin{align}
  (\Pi^\textup{RT}_\level \gradU, G) + (\Pi_\level u, \Div G) - \llangle \Pi^\partial_\level u, G \Nu \rrangle_{{\level}} & = (\Pi^\textup{RT}_\level \gradU - \gradU, G), \\
  (-\Div \Pi^\textup{RT}_\level \gradU + \nabla \Pi_\level p, v) + \llangle \Pi^\partial_\level p - \Pi_\level p, v \Nu \rrangle_{{\level}} & = (f,v), \\
  \tfrac{1}{\timeStep} (\Pi_\level p, q) - (\Pi_\level u, \nabla q) + \llangle \Pi^\partial_\level u, q \Nu \rrangle_{{\level}} & = 0, \\
  \llangle - \Pi^\textup{RT}_\level \gradU \Nu + \Pi^\partial_\level p \Nu, \mu \rrangle_{{\level}} & = 0
 \end{align}\end{subequations}
 for all $(G, v, q, \mu) \in W_\level \times V_\level \times Q_\level \times \skeletalSpace_\level$.

 We denote $e_\gradU = \Pi^\textup{RT}_\level \gradU - \gradU^\timeStep_\level$, $e_u = \Pi_\level u - u^\timeStep_\level$, $e_\lambda = \Pi^\partial_\level u - \lambda^\timeStep_\level$, $e_p = \Pi_\level p - p^\timeStep_\level$. Thus, \eqref{EQ:rth_ap} and \eqref{EQ:proj_ap} result in

 \begin{subequations}\label{EQ:diff_ap}\begin{align}
  (e_\gradU, G) + (e_u, \Div G) - \llangle e_\lambda, G \Nu \rrangle_{{\level}} & = (\Pi^\textup{RT}_\level \gradU - \gradU, G), \\
  (- \Div e_\gradU + \nabla e_p, v) + \llangle \Pi^\partial_\level p - \Pi_\level p, v \cdot \Nu \rrangle_{{\level}} & = 0, \\
  \tfrac{1}{\timeStep} (e_p, q) - (e_u, \nabla q) + \llangle e_\lambda, q \Nu \rrangle_{{\level}} & = 0, \\
  \llangle -e_\gradU \Nu + e_p \Nu + (\Pi^\partial_\level p - \Pi_\level p) \Nu, \mu \rrangle_{{\level}} & = 0
 \end{align}\end{subequations}
 for all $(G, v, q, \mu) \in W_\level \times V_\level \times Q_\level \times \skeletalSpace_\level$. Setting $G = e_\gradU$, $v = e_u$, $q = e_p$, $\mu = e_\lambda$, we receive
 \begin{equation}\label{EQ:error_a}
  \| e_\gradU \|^2_{0,\Omega} + \tfrac{1}{\timeStep} \| e_p \|^2_{0,\Omega} = ( \Pi^\textup{RT}_\level \gradU - \gradU, e_\gradU ) - \llangle (\Pi^\partial_\level p - \Pi_\level p) \Nu, e_u - e_\lambda \rrangle_{\level}.
 \end{equation}
 Using Green's formula, we receive
 \begin{equation}
  (\Pi^\textup{RT}_\level \gradU - \gradU, G) = (e_\gradU - \nabla e_u, G) + \llangle e_u - e_\lambda, G \Nu \rrangle_{\level}
 \end{equation}
 on each $\elem \in \mesh_\level$. There is $G \in W_\elem$ such that
 \begin{subequations}\begin{align}
  G \Nu & = e_u - e_\lambda && \text{ on } \partial \elem, \\
  (G, v) & = 0 && \text{ for all } v \in [\polynomials_{p-1}(\elem)]^{3 \times 3}, \\
  \| G \|_{0,\Omega} & \lesssim h^{1/2}_\level \nnorm e_u - e_\lambda \nnorm_\level.
 \end{align}\end{subequations}
 Hence, we have
 \begin{equation}
  \nnorm e_u - e_\lambda \nnorm^2_\level = (\Pi^\textup{RT}_\level \gradU - \gradU - e_\gradU, G) \lesssim h^{1/2}_\level  ( \| \Pi^\textup{RT}_\level \gradU - \gradU \|_0 + \| e_\gradU \|_0 ) \nnorm e_u - e_\lambda \nnorm_\level,
 \end{equation}
 which simplifies to $\nnorm e_u - e_\lambda \nnorm_\level \lesssim h^{1/2}_\level (\| \Pi^\textup{RT}_\level \gradU - \gradU \|_0 + \| e_\gradU \|_0 )$. Combining this and \eqref{EQ:error_a}, we receive
 \begin{multline}
  \| e_\gradU \|^2_{0,\Omega} + \tfrac{1}{\timeStep} \| e_p \|^2_{0,\Omega} \lesssim \| \Pi^\textup{RT}_\level \gradU - \gradU \|_{0,\Omega} \| e_\gradU \|_{0,\Omega} \\
  + \nnorm \Pi^\partial_\level p - \Pi_\level p \nnorm_\level h^{1/2}_\level (\| \Pi^\textup{RT}_\level \gradU - \gradU \|_0 + \| e_\gradU \|_0 ).
 \end{multline}
 Using Young's inequality and a simple algebraic manipulation, we obtain
 \begin{align}
  \| e_\gradU \|^2_0 + \tfrac{1}{\timeStep} \| e_p \|^2_0 \lesssim & \| \Pi^\textup{RT}_\level \gradU - \gradU \|^2_{0,\Omega} + h_\level \nnorm \Pi^\partial_\level p - \Pi_\level p \nnorm_\level^2 \\
  & + h^{1/2}_\level \nnorm \Pi^\partial_\level p - \Pi_\level p \nnorm_\level \| \Pi^\textup{RT}_\level \gradU - \gradU \|_0 \notag\\
  \lesssim & h^2_\level | \gradU |^2_{1, \Omega} + h^2_\level | p |^2_{1, \Omega} + h^2_\level | p |_{1, \Omega} | \gradU |_{1, \Omega} \\
  \lesssim & (1 + \timeStep)^2 C^2_\timeStep h^2_\level \| f \|^2_{0,\Omega},
 \end{align}
 where the second inequality follows from the approximation properties of the projections and the third inequality is \eqref{EQ:regularity_est}. Combining this with the properties of $\Pi^\textup{RT}_\level$ and $\Pi_\level$ finishes the proof.
\end{proof}
% 
% --------------------------------------------------------------------------------------------------
\section{Injection operators}\label{SEC:injection_op}
% --------------------------------------------------------------------------------------------------
% 
Assumptions on injection operators:
\begin{enumerate}
  \item Stability:
  \begin{equation}\label{EQ:ia1}
   \| \injectionOp_\level \lambda \|_\level \lesssim \| \lambda \|_{\level - 1} \qquad \forall \lambda \in \skeletalSpace_{\level-1} \tag{IA1}
  \end{equation}
  \item Identity for conforming finite elements:
  \begin{equation}\label{EQ:ia2}
   \injectionOp_\level \gamma_{\level - 1} w = \gamma_\level w \qquad \forall w \in \linElementSpace_{\level - 1} \tag{IA2}
  \end{equation}
\end{enumerate}

Possible injection operators are three dimensional versions of the four injection operators of \cite{LuRK21}, since they have already been shown to satisfy \eqref{EQ:ia1} and \eqref{EQ:ia2} for the Poisson equation.

Although all four operators can be used for our scheme, we restrict ourselves to the analysis of their interpolation operator $\injectionOp_\level^1$. It linearly interpolates the (averaged) values of a skeleton function in the corners of a new face. For a detailed description of this procedure, cf.\ \cite[Sect.\ 3.2]{LuRK21}.

\begin{lemma}
 Injection operator $\injectionOp_\level^1$ admits \eqref{EQ:ia1} and \eqref{EQ:ia2}.
\end{lemma}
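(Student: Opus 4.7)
The plan is to reduce the vector-valued case directly to the scalar injection operator analyzed in \cite[Sect.~3.2 and the corresponding proofs]{LuRK21}. Since $\skeletalSpace_\level$ consists of piecewise polynomials that are vector-valued in $\IR^3$, and $\injectionOp_\level^1$ acts purely componentwise by linearly interpolating averaged vertex values, both \eqref{EQ:ia1} and \eqref{EQ:ia2} can be verified component by component. Under the norm $\| \cdot \|_\level$, which is a direct sum of scalar face-based weighted $L^2$-norms, the three vector components decouple, so the scalar bounds from \cite{LuRK21} yield the vector-valued analogues at the cost of a factor of $\sqrt{3}$ absorbed into the hidden constant.

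For \eqref{EQ:ia2}, take $w \in \linElementSpace_{\level-1}$ and consider $\lambda = \gamma_{\level-1} w$. Since $w$ is globally continuous, its vertex values are single-valued, so the average $\avg{\lambda}(\vertex)$ reduces to $w(\vertex)$ at every vertex of $\mesh_\level$ lying in $\overline\Omega$; vertices on $\partial \Omega$ vanish on both sides because of the boundary condition embedded in $\skeletalSpace_\level$. On any new face $\face \in \faceSet_\level$, the three corners of $\face$ are contained in a single coarse element of $\mesh_{\level-1}$ on which $w$ is affine, so the linear interpolant of the corner values coincides with $w|_\face = \gamma_\level w|_\face$. This gives \eqref{EQ:ia2} identically.

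For \eqref{EQ:ia1}, I would argue face by face. Fix $\face' \in \faceSet_\level$, its parent face $\face \in \faceSet_{\level-1}$ (or the parent element in the non-matching case), and each vertex $\vertex$ of $\face'$. A scaling argument together with an inverse estimate for polynomials of degree at most $p$ yields $|\lambda(\vertex)|^2 \lesssim h_{\level-1}^{-2} \| \lambda \|_{0,\omega_\vertex}^2$ where $\omega_\vertex$ is the union of coarse faces meeting $\vertex$; averaging only adds a bounded factor. Multiplying by $h_{\face'} |\face'| \sim h_\level^{3}$, using the refinement assumption $h_\level \ge c_\textup{ref} h_{\level-1}$ together with $h_\level \le h_{\level-1}$, and summing over all new faces and their vertices with bounded multiplicity gives $\| \injectionOp_\level^1 \lambda \|_\level^2 \lesssim \| \lambda \|_{\level - 1}^2$. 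Applying this to each of the three components of $\lambda$ delivers the vector-valued stability.

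The main obstacle, in principle, is controlling the combinatorial constants when several refined faces meet at the same coarse vertex and when neighbors across the skeleton are counted; however these issues are purely geometric and are handled in \cite{LuRK21} under exactly the same regularity and refinement hypotheses imposed here, so the scalar bookkeeping carries over verbatim and no new ingredient is required.
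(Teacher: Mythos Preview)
Your proposal is correct and follows essentially the same approach as the paper: the paper's own proof is simply a one-line citation to \cite[Lem.~3.2]{LuRK21}, i.e., it invokes the scalar result verbatim without even spelling out the componentwise reduction. Your write-up is more detailed than the paper's, but the underlying strategy---apply the scalar injection analysis of \cite{LuRK21} component by component, using that both the operator and the norm decouple---is exactly what the paper intends.
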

\begin{proof}
 This is the proof for $\injectionOp_\level^1$ in \cite[Lem.\ 3.2]{LuRK21}.
\end{proof}
\begin{remark}[Other injection operators]
 Following the proof of \eqref{EQ:ia1} for the other injection operators in \cite{LuRK21} we note that \eqref{EQ:stability_u} involves $\timeStep$ (while the analogous equation for Poisson does not). This indicates that for the other injection operators of \cite{LuRK21} only
 $$ \| \injectionOp_\level^\star \lambda \|_\level \lesssim \sqrt{1+\timeStep} \| \lambda \|_{\level - 1}.$$
 holds true. This statement is sufficient for the remaining analysis to be conducted (with the factor popping up at different locations).
\end{remark}

\begin{lemma}\label{LEM:quasi_orthogonality}
 Assume \eqref{EQ:ia2}, then for any $\lambda \in \skeletalSpace_\level$, we have
 \begin{equation}
  ( \gradU^\timeStep_\level \lambda - \gradU^\timeStep_{\level - 1} \projectionOp_{\level - 1} \lambda, \nabla w ) - (p^\timeStep_\level \lambda - p^\timeStep_{\level -1} \projectionOp_{\level -1} \lambda, \Div w) = 0 \qquad \forall w \in \linElementSpace_{\level - 1},
 \end{equation}
 where $\projectionOp_{\level - 1}\colon \skeletalSpace_\level \to \skeletalSpace_{\level - 1}$ is defined via
 \begin{equation}
  a_{\level - 1}(\projectionOp_{\level-1} \lambda, \mu) = a_\level(\lambda, \injectionOp_\level \mu).
 \end{equation}
\end{lemma}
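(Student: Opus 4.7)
The plan is to test the defining equation for $\projectionOp_{\level-1}$ against the trace of a continuous piecewise linear function, namely to pick $\mu = \gamma_{\level-1} w$ with $w \in \linElementSpace_{\level-1}$. Property \eqref{EQ:ia2} then yields $\injectionOp_\level \gamma_{\level-1} w = \gamma_\level w$, so that the defining identity for $\projectionOp_{\level-1}$ reads
\begin{equation*}
 a^\timeStep_{\level-1}(\projectionOp_{\level-1}\lambda, \gamma_{\level-1} w) = a^\timeStep_\level(\lambda, \gamma_\level w).
\end{equation*}

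Next, I would unpack both sides using the expression \eqref{EQ:bilinear} for $a^\timeStep_\cdot$ together with Lemma \ref{LEM:linear_constant}. The latter gives $\gradU^\timeStep_\level \gamma_\level w = \nabla w$, $u^\timeStep_\level \gamma_\level w = w$, $p^\timeStep_\level \gamma_\level w = -\timeStep\,\Div w$, and the analogous identities on level $\level-1$. The stabilization term $\llangle \stab(u^\timeStep_\cdot(\cdot) - \cdot), (u^\timeStep_\cdot \gamma w - \gamma w)\rrangle$ vanishes because $u^\timeStep_\level \gamma_\level w - \gamma_\level w = w - w = 0$ on every face (and since we are in the RT-H setting $\stab = 0$ anyway). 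Substituting and using $\tfrac{1}{\timeStep}(p^\timeStep_\level \lambda, -\timeStep\,\Div w) = -(p^\timeStep_\level \lambda, \Div w)$ yields
\begin{equation*}
 a^\timeStep_\level(\lambda, \gamma_\level w) = (\gradU^\timeStep_\level \lambda, \nabla w) - (p^\timeStep_\level \lambda, \Div w),
\end{equation*}
and analogously on level $\level-1$ for $\projectionOp_{\level-1}\lambda$.

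Equating the two expressions and rearranging then gives precisely the claimed quasi-orthogonality
\begin{equation*}
 (\gradU^\timeStep_\level \lambda - \gradU^\timeStep_{\level-1}\projectionOp_{\level-1}\lambda, \nabla w) - (p^\timeStep_\level \lambda - p^\timeStep_{\level-1}\projectionOp_{\level-1}\lambda, \Div w) = 0.
\end{equation*}
There is no genuine obstacle here: the proof is a direct bookkeeping exercise that combines \eqref{EQ:ia2} with Lemma \ref{LEM:linear_constant}. The only subtle point worth mentioning explicitly is the cancellation of the $\stab$-term, which is why the identity picks up no boundary contribution from the traces.
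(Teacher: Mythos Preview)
Your proof is correct and follows exactly the same approach as the paper: set $\mu=\gamma_{\level-1}w$, use \eqref{EQ:ia2} and Lemma~\ref{LEM:linear_constant}, and read off the identity from the definition of $\projectionOp_{\level-1}$. If anything, you give more detail than the paper, which omits the explicit unpacking of $a^\timeStep_\level$ and the remark on the $\stab$-term.
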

\begin{proof}
 For $w \in \linElementSpace_{\level - 1}$ let $\mu = \gamma_{\level - 1} w$. Using \eqref{EQ:ia2} and Lemma \ref{LEM:linear_constant} we have
 \begin{equation}
  \injectionOp_\level \mu = \gamma_\level w, \qquad \gradU^\timeStep_{\level - 1} \mu = \gradU^\timeStep_\level \injectionOp_\level \mu = \nabla w, \qquad p^\timeStep_{\level - 1} \mu = p^\timeStep_\level \injectionOp_\level \mu = - \timeStep \Div w,
 \end{equation}
 which gives the result.
\end{proof}
% 
% --------------------------------------------------------------------------------------------------
\section{Multigrid method and main convergence result}
% --------------------------------------------------------------------------------------------------
% 
We analyze the convergence of a standard V--cycle multigrid method based on Theorem 3.1 of~\cite{DuanGTZ07}. We assume that the smoother falls into their framework, define $\lambda^A_\level$ to be the largest eigenvalue of $A_\level$, and need to satisfy three assumptions. That is, there needs to be constants $C_1, C_2, C_3 > 0$ independent of the mesh level $\level$, such that we have:
\begin{itemize}
\item Regularity approximation assumption:
  \begin{equation}\label{EQ:a1}
    | a_\level(\lambda - \injectionOp_\level \projectionOp_{\level-1} \lambda, \lambda) |
    \le C_1 \frac{\| A_\level \lambda \|^2_\level}{\underline \lambda^A_\level} \qquad \forall \lambda \in \skeletalSpace_\level. \tag{A1}
  \end{equation}
\item Stability of the ``Ritz quasi-projection'' $\projectionOp_{\level-1}$ and injection $\injectionOp_\level:$
 \begin{equation}\label{EQ:a2}
  \| \lambda - \injectionOp_\level \projectionOp_{\level-1} \lambda\|_{a_\level} \le C_2 \| \lambda \|_{a_\level} \qquad \forall \lambda \in \skeletalSpace_\level. \tag{A2}
\end{equation}
\item Smoothing hypothesis:
 \begin{equation}\label{EQ:a3}
  \frac{\| \lambda \|^2_\level}{\underline \lambda^A_\level} \le C_3 \langle K_\level \lambda, \lambda \rangle_\level, \tag{A3}
 \end{equation}
 where $K_\level$ only depends on the selected smoother.
\end{itemize}
Having formulated these preliminaries, Theorem~3.1 in~\cite{DuanGTZ07} reads
\begin{theorem}\label{TH:main_theorem}
 Assume that \eqref{EQ:a1}, \eqref{EQ:a2}, and \eqref{EQ:a3} hold. Then for all $\level \ge 0$,
 \begin{equation}
  | a_\level ( \lambda - B_\level A_\level \lambda, \lambda ) | \le \delta a_\level(\lambda, \lambda),
 \end{equation}
 where
 \begin{equation}
  \delta = \frac{C_1 C_3}{\iterMgInner - C_1 C_3} \qquad \text{with} \qquad \iterMgInner > 2 C_1 C_3.
 \end{equation}
\end{theorem}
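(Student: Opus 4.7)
Because the theorem is essentially a restatement of Theorem~3.1 from \cite{DuanGTZ07} under the three assumptions (A1)--(A3) isolated above, my plan is to invoke that result directly rather than reproduce its proof. The role of the present manuscript at this point is to package the analysis of earlier sections (local solvers, auxiliary problem error estimate, injection operators) into the exact form needed by the abstract V--cycle framework.

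The proof strategy in \cite{DuanGTZ07} itself is an induction on the level $\level$. At the coarsest level $\level=0$, the V--cycle is defined to perform an exact solve so $B_0 A_0 = I$ and the contraction bound holds trivially. For the inductive step, one writes the V--cycle error propagator as the composition of $\iterMgInner$ smoothing sweeps with the coarse-grid correction $I - \injectionOp_\level \projectionOp_{\level-1}$, and then tests the resulting operator in the $a_\level$-inner product. The assumption \eqref{EQ:a1} controls the \emph{approximation} part of the coarse correction by the scaled residual norm $\|A_\level \lambda\|^2_\level / \underline\lambda^A_\level$, the assumption \eqref{EQ:a2} bounds the coarse correction in the energy norm, and \eqref{EQ:a3} provides the smoothing estimate that converts $\iterMgInner$ sweeps into damping of the high-frequency residual by a factor proportional to $1/\iterMgInner$. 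Combining these three ingredients and invoking the inductive hypothesis yields the contraction factor $\delta = C_1 C_3 /(\iterMgInner - C_1 C_3)$ as soon as $\iterMgInner > 2 C_1 C_3$.

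Thus the proof plan here is short: cite \cite{DuanGTZ07}. The real work, and hence the real obstacle, is to \emph{verify} \eqref{EQ:a1}--\eqref{EQ:a3} in our HDG--Stokes setting. I expect \eqref{EQ:a3} to be the easiest and to follow from a standard argument for Richardson or Jacobi-type smoothers, using the eigenvalue upper bound in Lemma~\ref{LEM:eigenvalue_bound}. Assumption \eqref{EQ:a2} should follow from the injection-operator properties \eqref{EQ:ia1}, \eqref{EQ:ia2}, the lifting $\liftingOp^\timeStep_\level$ and its norm equivalences \eqref{EQ:norm_equiv}, Lemma~\ref{LEM:norm_equiv_s}, and the stability bounds of Lemma~\ref{LEM:stability_local}. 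The delicate one is \eqref{EQ:a1}, which ties together the quasi-orthogonality relation of Lemma~\ref{LEM:quasi_orthogonality} with the $O(h_\level)$ error estimate for the auxiliary problem (Theorem~\ref{TH:error_auxiliary}) and the local solver bounds of Lemmas~\ref{LEM:trace_diff}--\ref{LEM:equiv_bilinear}; the interplay of $\timeStep$-dependent constants emerging from Lemma~\ref{LEM:stability_local} and the remark after \eqref{EQ:ia2} will need to be tracked carefully so that $C_1$ depends on $\timeStep$ only through the regularity constant $C_\timeStep$ of \eqref{EQ:regularity_est}. Once these three verifications are in place, the theorem follows immediately from the abstract result.
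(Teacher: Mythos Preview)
Your proposal is correct and matches the paper's approach exactly: the paper does not prove this theorem but simply quotes it as Theorem~3.1 of \cite{DuanGTZ07}, deferring the substantive work to the separate verification of \eqref{EQ:a1}--\eqref{EQ:a3} in the subsequent section. Your sketch of the inductive argument from \cite{DuanGTZ07} is a reasonable gloss, though note that in the paper's later computations the constant $C_1$ in \eqref{EQ:a1} does carry explicit powers of $(1+\timeStep)$, not only the regularity constant $C_\timeStep$.
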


That is, we will show that the aforementioned three assumptions hold  in order to guarantee convergence of the multigrid method, where \eqref{EQ:a3} is obviously true for the considered standard smoothers: Jacobi and Gauss--Seidel.
% 
% --------------------------------------------------------------------------------------------------
\section{Convergence analysis}
% --------------------------------------------------------------------------------------------------
% 
%%%%%%%%%%%%%%%%%%%%%%%%%%%%%%%%%%%%%%%
\subsection{Poof of \eqref{EQ:a2}}
%%%%%%%%%%%%%%%%%%%%%%%%%%%%%%%%%%%%%%%
% 
Since we have Stokes versions of (LS1) -- (LS6) in \cite{LuRK21}, following the proof of Lemma 5.1 in \cite{LuRK21}, we obtain:
\begin{lemma}\label{LEM:conv_result}
 Assuming \eqref{EQ:ia1} and \eqref{EQ:ia2}, we have for all $\lambda \in \skeletalSpace_{\level - 1}$ that
 \begin{align}
  \| \gradU^\timeStep_\level \injectionOp_\level \lambda \|_0 & \lesssim (1 + \timeStep) \| \gradU^\timeStep_{\level-1} \lambda \|_0, \label{EQ:conv_l}\\
 \| u^\timeStep_{\level - 1} \lambda - u^\timeStep_\level \injectionOp_\level \lambda \|_0 & \lesssim h_\level (1 + \timeStep) \| \gradU^\timeStep_{\level - 1} \lambda \|_0, \label{EQ:conv_u}\\
 \| p^\timeStep_{\level - 1} \lambda - p^\timeStep_\level \injectionOp_\level \lambda \|_0 & \lesssim (1 + \timeStep) \sqrt{\timeStep} \| \gradU^\timeStep_{\level - 1} \lambda \|_0. \label{EQ:conv_p}
 \end{align}
\end{lemma}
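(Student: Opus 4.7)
The strategy, mirroring Lemma~5.1 of \cite{LuRK21}, is to extract from $\lambda$ a globally continuous, element-wise linear companion on which the fine- and coarse-level local solvers agree exactly, leaving only a small residual to be estimated by stability.

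First, I would set $w := \avgOp_{\level - 1} u^\timeStep_{\level - 1} \lambda \in \linElementSpace_{\level - 1} \subset \linElementSpace_\level$, where the inclusion uses geometric conformity of the nested meshes. By Lemma~\ref{LEM:diff_bound_by_l},
\begin{equation*}
 \| \lambda - \gamma_{\level - 1} w \|_{\level - 1} \lesssim h_{\level - 1} \sqrt{1 + \timeStep}\; \| \gradU^\timeStep_{\level - 1} \lambda \|_0,
\end{equation*}
so $\lambda$ differs from the skeleton trace of a piecewise linear continuous function only by a controlled amount. Next, combining \eqref{EQ:ia2} with Lemma~\ref{LEM:linear_constant}, one obtains the matching identities
\begin{equation*}
 \gradU^\timeStep_\level \injectionOp_\level \gamma_{\level - 1} w = \nabla w = \gradU^\timeStep_{\level - 1} \gamma_{\level - 1} w,
\end{equation*}
together with $u^\timeStep_\level \injectionOp_\level \gamma_{\level - 1} w = w = u^\timeStep_{\level - 1} \gamma_{\level - 1} w$ and $p^\timeStep_\level \injectionOp_\level \gamma_{\level - 1} w = -\timeStep \Div w = p^\timeStep_{\level - 1} \gamma_{\level - 1} w$. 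Splitting $\lambda = \gamma_{\level - 1} w + (\lambda - \gamma_{\level - 1} w)$ and invoking linearity of the local solvers, the $w$-parts cancel pairwise, and the task reduces to bounding
\begin{align*}
 \| \gradU^\timeStep_\level \injectionOp_\level \lambda \|_0 & \le \| \nabla w \|_0 + \| \gradU^\timeStep_\level \injectionOp_\level (\lambda - \gamma_{\level - 1} w) \|_0, \\
 \| u^\timeStep_{\level - 1} \lambda - u^\timeStep_\level \injectionOp_\level \lambda \|_0 & \le \| u^\timeStep_{\level - 1}(\lambda - \gamma_{\level - 1} w) \|_0 + \| u^\timeStep_\level \injectionOp_\level (\lambda - \gamma_{\level - 1} w) \|_0,
\end{align*}
and the analogue for $p$.

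To close the argument, I would bound $\| \nabla w \|_0 \lesssim \sqrt{1 + \timeStep}\; \| \gradU^\timeStep_{\level - 1} \lambda \|_0$ via Lemma~\ref{LEM:bound_by_local_l}, and then apply the stability bounds of Lemma~\ref{LEM:stability_local} on each level, using \eqref{EQ:ia1} to trade $\| \injectionOp_\level (\lambda - \gamma_{\level - 1} w) \|_\level$ for $\| \lambda - \gamma_{\level - 1} w \|_{\level - 1}$. The quasi-uniformity $h_{\level - 1} \le h_\level / c_\textup{ref}$ cancels the $h^{-1}_\level$ appearing in the $\gradU$- and $p$-stability bounds, and the two $\sqrt{1 + \timeStep}$-factors (one from stability, one from Lemma~\ref{LEM:diff_bound_by_l}) combine into the advertised $1 + \timeStep$; in the $p$-estimate, \eqref{EQ:stability_p} contributes the additional $\sqrt{\timeStep}$ that appears in \eqref{EQ:conv_p}.

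The main obstacle is not conceptual but bookkeeping: the Poisson blueprint from \cite{LuRK21} transfers verbatim once the local-solver analogues (LS1)--(LS6), catalogued in Remark~\ref{REM:ls}, are in hand. What is genuinely new is the appearance of $\sqrt{1 + \timeStep}$ factors in \eqref{EQ:stability_u} and in Lemma~\ref{LEM:bound_by_local_l}, which were absent in the scalar setting; one must therefore track exactly how many such factors are picked up in each chain of inequalities in order to land on the claimed exponents rather than on strictly worse ones.
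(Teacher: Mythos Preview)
Your proposal is correct and follows essentially the same route as the paper: introduce $w=\avgOp_{\level-1}u^\timeStep_{\level-1}\lambda$, use \eqref{EQ:ia2} with Lemma~\ref{LEM:linear_constant} so that the $w$-contributions match across levels, bound the residual $\lambda-\gamma_{\level-1}w$ via Lemma~\ref{LEM:stability_local} and \eqref{EQ:ia1}, and close with Lemma~\ref{LEM:diff_bound_by_l}. The paper only writes out the $p$-case explicitly, but your more detailed bookkeeping for all three estimates (including the use of Lemma~\ref{LEM:bound_by_local_l} for $\|\nabla w\|_0$) is exactly the intended completion.
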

\begin{proof}
 We only give the proof of \eqref{EQ:conv_p}:
 \begin{equation}
  \| p^\timeStep_\level \injectionOp_\level \lambda - p^\timeStep_{\level - 1} \lambda \|_0 \le \underbrace{ \| p^\timeStep_\level \injectionOp_\level \lambda - p^\timeStep_\level \injectionOp_\level \gamma_{\level - 1} w \|_0 }_{ =: \Xi_1 } + \underbrace{ \| p^\timeStep_\level \injectionOp_\level \gamma_{\level - 1} w  - p^\timeStep_{\level - 1} \lambda \|_0 }_{ =: \Xi_2 }
 \end{equation}
 for all $w \in \linElementSpace_\level$.
 \begin{align}
  \Xi_1 & \overset{\eqref{EQ:stability_p}}{\lesssim} \sqrt{\timeStep (1 + \timeStep)} h^{-1}_\level \| \injectionOp_\level \lambda - \injectionOp_\level \gamma_{\level - 1} w \|_\level \\
  & \overset{\eqref{EQ:ia1}}{\lesssim} \sqrt{\timeStep (1 + \timeStep)} h^{-1}_\level \| \lambda - \gamma_{\level - 1} w \|_{\level -1},
 \end{align}
 \begin{align}
  \Xi_2 & \overset{\eqref{EQ:ia2}}{=} \| p^\timeStep_\level \gamma_\level w - p^\timeStep_{\level - 1} \lambda \|_0 \overset{\eqref{EQ:linear}}{=} \| p^\timeStep_{\level - 1} \gamma_{\level - 1} w - p^\timeStep_{\level - 1} \lambda \|_0 \\
  & \overset{\eqref{EQ:stability_p}}{\lesssim} \sqrt{\timeStep (1 +  \timeStep)} h^{-1}_\level \| \lambda - \gamma_{\level - 1} w \|_{\level - 1}.
 \end{align}
 Setting $w = \avgOp_{\level -1} u^\timeStep_{\level - 1} \lambda$ and using Lemma \ref{LEM:diff_bound_by_l}, we receive \eqref{EQ:conv_p}.
\end{proof}
\begin{lemma}
 We have that
 \begin{align}
  a_\level(\injectionOp_\level \lambda, \injectionOp_\level \lambda) & \lesssim (1 + \timeStep)^2 a_{\level - 1}(\lambda, \lambda) && \forall \lambda \in \skeletalSpace_{\level - 1}, \label{EQ:l_satble}\\
  a_{\level - 1} (\projectionOp_{\level - 1} \lambda, \projectionOp_{\level - 1} \lambda) & \lesssim (1 + \timeStep)^2 a_\level(\lambda, \lambda) && \forall \lambda \in \skeletalSpace_\level. \label{EQ:ap_stable}
 \end{align}
\end{lemma}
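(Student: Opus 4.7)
The plan is to prove \eqref{EQ:l_satble} by direct expansion of $a_\level^\timeStep$ using Lemma \ref{LEM:conv_result}, and then to derive \eqref{EQ:ap_stable} from \eqref{EQ:l_satble} via the defining Galerkin property of $\projectionOp_{\level-1}$ together with a Cauchy--Schwarz argument. Since we are working with RT-H, the stabilization contribution in \eqref{EQ:bilinear} vanishes, leaving $a^\timeStep_\level(\lambda,\lambda) = \| \gradU^\timeStep_\level \lambda \|_0^2 + \tfrac{1}{\timeStep} \| p^\timeStep_\level \lambda \|_0^2$; both summands must be controlled separately.

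For \eqref{EQ:l_satble}, I would start by writing
\begin{equation*}
 a_\level(\injectionOp_\level \lambda, \injectionOp_\level \lambda) = \| \gradU^\timeStep_\level \injectionOp_\level \lambda \|_0^2 + \tfrac{1}{\timeStep}\| p^\timeStep_\level \injectionOp_\level \lambda \|_0^2.
\end{equation*}
The gradient term is bounded directly by \eqref{EQ:conv_l}, giving $\| \gradU^\timeStep_\level \injectionOp_\level \lambda \|_0^2 \lesssim (1+\timeStep)^2 \| \gradU^\timeStep_{\level-1}\lambda\|_0^2$, which is already controlled by $a_{\level-1}(\lambda,\lambda)$. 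For the pressure term I would insert $\pm p^\timeStep_{\level-1}\lambda$ and apply the triangle inequality:
\begin{equation*}
 \| p^\timeStep_\level \injectionOp_\level \lambda \|_0 \le \| p^\timeStep_\level \injectionOp_\level \lambda - p^\timeStep_{\level-1}\lambda \|_0 + \| p^\timeStep_{\level-1}\lambda \|_0.
\end{equation*}
By \eqref{EQ:conv_p} the first summand is bounded by $(1+\timeStep)\sqrt{\timeStep}\|\gradU^\timeStep_{\level-1}\lambda\|_0$, so after multiplication by $\tfrac{1}{\timeStep}$ it contributes at most $(1+\timeStep)^2 \|\gradU^\timeStep_{\level-1}\lambda\|_0^2$, while $\tfrac{1}{\timeStep}\|p^\timeStep_{\level-1}\lambda\|_0^2$ is one of the defining pieces of $a_{\level-1}(\lambda,\lambda)$. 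Summing yields \eqref{EQ:l_satble}.

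For \eqref{EQ:ap_stable}, I would exploit the definition $a_{\level-1}(\projectionOp_{\level-1}\lambda,\mu) = a_\level(\lambda,\injectionOp_\level \mu)$ by testing with $\mu = \projectionOp_{\level-1}\lambda$. This gives
\begin{equation*}
 \|\projectionOp_{\level-1}\lambda\|_{a_{\level-1}}^2 = a_\level(\lambda, \injectionOp_\level \projectionOp_{\level-1}\lambda) \le \|\lambda\|_{a_\level}\,\|\injectionOp_\level \projectionOp_{\level-1}\lambda\|_{a_\level},
\end{equation*}
by Cauchy--Schwarz for the inner product induced by $a_\level$. Then I apply \eqref{EQ:l_satble} to the second factor to obtain $\|\injectionOp_\level \projectionOp_{\level-1}\lambda\|_{a_\level} \lesssim (1+\timeStep)\|\projectionOp_{\level-1}\lambda\|_{a_{\level-1}}$, and divide through by $\|\projectionOp_{\level-1}\lambda\|_{a_{\level-1}}$ (handling the trivial case separately). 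Squaring gives \eqref{EQ:ap_stable}.

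I do not anticipate a genuine obstacle: the two stability estimates on local solvers collected in Lemma \ref{LEM:conv_result} are exactly tuned for this purpose, and the only subtlety is tracking the $(1+\timeStep)$ factors so as to end up with $(1+\timeStep)^2$ rather than a larger power. The main care will be to ensure that the $\tfrac{1}{\timeStep}$ weight of the pressure part in $a_\level$ is correctly absorbed by the $\sqrt{\timeStep}$ appearing in \eqref{EQ:conv_p}, which is precisely what makes both summands come out with the same $(1+\timeStep)^2$ prefactor.
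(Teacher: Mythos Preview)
Your proposal is correct and follows essentially the same route as the paper: for \eqref{EQ:l_satble} the paper also expands $a_\level$ into its gradient and pressure parts and invokes \eqref{EQ:conv_l} and \eqref{EQ:conv_p} (your explicit insertion of $\pm p^\timeStep_{\level-1}\lambda$ just spells out what the paper compresses into one line), and for \eqref{EQ:ap_stable} the paper uses exactly the same Galerkin identity plus Cauchy--Schwarz plus \eqref{EQ:l_satble} argument you describe.
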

\begin{proof}
 We have
 \begin{align}
  a_\level(\injectionOp_\level \lambda, \injectionOp_\level \lambda) & = \| \gradU^\timeStep_\level \injectionOp_\level \lambda \|^2_0 + \tfrac{1}{\timeStep} \| p^\timeStep_\level \injectionOp_\level \lambda \|^2 \\
  & \lesssim (1 + \timeStep)^2 \| \gradU^\timeStep_{\level - 1} \lambda \|^2_0 + \tfrac{1}{\timeStep} \| p^\timeStep_{\level - 1} \lambda \|^2_0 \\
  & \le (1 + \timeStep)^2 a_{\level - 1}(\lambda, \lambda),
 \end{align}
 where the first inequality is \eqref{EQ:conv_l} and \eqref{EQ:conv_p}, respectively. Beyond this,
 \begin{align}
  \| \projectionOp_{\level - 1} \lambda \|^2_{a_{\level - 1}} & = a_\level(\lambda, \injectionOp_\level \projectionOp_{\level - 1}\lambda) \le \| \lambda \|_{a_\level} \| \injectionOp_\level \projectionOp_{\level - 1} \lambda \|_{a_\level} \\
  & \lesssim \| \lambda \|_{a_\level} (1 + \timeStep) \| \projectionOp_{\level - 1} \lambda \|{a_{\level - 1}}. 
 \end{align}
\end{proof}
% 
%%%%%%%%%%%%%%%%%%%%%%%%%%%%%%%%%%%%%%%
\begin{theorem}
 \eqref{EQ:a2} holds true.
\end{theorem}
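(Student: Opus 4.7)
The plan is to derive \eqref{EQ:a2} directly from the two-level stability bounds \eqref{EQ:l_satble} and \eqref{EQ:ap_stable} that were established just above, together with the triangle inequality. No additional structural ingredients (e.g.\ approximation properties of $\injectionOp_\level \projectionOp_{\level-1}$) are needed: only the norm-boundedness of the coarse-to-fine injection composed with the Ritz quasi-projection.

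First I would write, for arbitrary $\lambda \in \skeletalSpace_\level$,
\begin{equation*}
 \| \lambda - \injectionOp_\level \projectionOp_{\level-1} \lambda \|_{a_\level}
 \le \| \lambda \|_{a_\level} + \| \injectionOp_\level \projectionOp_{\level-1} \lambda \|_{a_\level}
\end{equation*}
and observe that the first summand is already of the desired form. Next I would apply \eqref{EQ:l_satble} with $\projectionOp_{\level-1}\lambda \in \skeletalSpace_{\level-1}$ in place of $\lambda$ to bound the second summand by $(1+\timeStep)\,\| \projectionOp_{\level-1}\lambda\|_{a_{\level-1}}$ (up to a constant). Finally, applying \eqref{EQ:ap_stable} to $\projectionOp_{\level-1}\lambda$ yields
\begin{equation*}
 \| \injectionOp_\level \projectionOp_{\level-1} \lambda \|_{a_\level}
 \lesssim (1+\timeStep)\, \| \projectionOp_{\level-1}\lambda \|_{a_{\level-1}}
 \lesssim (1+\timeStep)^2 \, \| \lambda \|_{a_\level}.
\end{equation*}
Combining with the triangle inequality yields \eqref{EQ:a2} with a constant $C_2$ that is of the form $1 + C(1+\timeStep)^2$, hence independent of the mesh level $\level$, as required by the abstract multigrid framework.

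There is essentially no obstacle in this argument; everything is packaged by the preceding lemma. The one point worth flagging is merely bookkeeping: the constant $C_2$ depends on the fixed augmented-Lagrangian time step $\timeStep$, but not on $\level$, which is exactly the hypothesis that Theorem \ref{TH:main_theorem} needs. This $\timeStep$-dependence also matches the remark about the factor $\sqrt{1+\timeStep}$ appearing in the stability bound for the alternative injection operators of \cite{LuRK21}, so the same argument carries over verbatim (with a correspondingly inflated, but still $\level$-independent, constant $C_2$) to those choices of $\injectionOp_\level$.
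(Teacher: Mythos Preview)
Your proposal is correct and uses essentially the same ingredients as the paper: both arguments reduce \eqref{EQ:a2} to the stability estimates \eqref{EQ:l_satble} and \eqref{EQ:ap_stable}, yielding a constant $C_2$ of order $(1+\timeStep)^2$. The only cosmetic difference is that the paper expands $\|\lambda - \injectionOp_\level \projectionOp_{\level-1}\lambda\|_{a_\level}^2$ and uses the definition of $\projectionOp_{\level-1}$ to identify the cross term $-2a_{\level-1}(\projectionOp_{\level-1}\lambda,\projectionOp_{\level-1}\lambda)\le 0$ before bounding the remaining term, whereas you go directly via the triangle inequality; the resulting bounds differ only by harmless constants.
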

%%%%%%%%%%%%%%%%%%%%%%%%%%%%%%%%%%%%%%%
% 
\begin{proof}
\begin{align}
 a_\level (\lambda & - \injectionOp_\level \projectionOp_{\level - 1} \lambda, \lambda - \injectionOp_\level \projectionOp_{\level - 1} \lambda) \\
 = & a_\level(\lambda, \lambda) \underbrace{ - 2 a_\level(\projectionOp_{\level - 1} \lambda, \projectionOp_{\level - 1} \lambda) }_{ \le 0 } + a_\level(\injectionOp_\level \projectionOp_{\level - 1} \lambda, \injectionOp_\level \projectionOp_{\level - 1} \lambda) \\
 \lesssim & a_\level(\lambda, \lambda) + (1 + \timeStep)^4 a_\level(\lambda, \lambda).
\end{align}
\end{proof}
% 
%%%%%%%%%%%%%%%%%%%%%%%%%%%%%%%%%%%%%%%
\subsection{Proof of \eqref{EQ:a1}}
%%%%%%%%%%%%%%%%%%%%%%%%%%%%%%%%%%%%%%%
% 
Here, we use the previously introduced $\liftingOp^\timeStep_\level$, see \eqref{EQ:lifting_def}, to construct the right-hand side of the auxiliary problem as an $L^2$ lifting. Hence, we introduce $f_\lambda$ by
\begin{equation}
 (f_\lambda, \liftingOp^\timeStep_\level \mu) = \langle A_\level \lambda , \mu \rangle_\level = a_\level(\lambda, \mu) \qquad \forall \mu \in \skeletalSpace_\level, \label{EQ:scp_def}
\end{equation}
where we search for $f_\lambda$ in the space $\liftingOp^\timeStep_\level \skeletalSpace_\level$. We denote $(\tilde \gradU, \tilde u, \tilde p)$ as the weak solution of
\begin{subequations}\label{EQ:ap_flambda}\begin{align}
 \tilde \gradU - \nabla \tilde u & = 0 && \text{ in } \Omega, \label{EQ:ap_flambda_1}\\
 - \Div \tilde \gradU + \nabla \tilde p & = f_\lambda && \text{ in } \Omega, \\
 \tfrac{1}{\timeStep} \tilde p + \Div \tilde u & = 0 && \text{ in } \Omega, \label{EQ:ap_flambda_3}\\
 \tilde u & = 0 && \text{ on } \partial \Omega.
\end{align}\end{subequations}
Let $\tilde \lambda \in \skeletalSpace_\level$ be the RT-H solution of \eqref{EQ:ap_flambda}, that is
\begin{equation}
 a_\level(\tilde \lambda, \mu) = (f_\lambda, u^\timeStep_\level \mu) \qquad \forall \mu \in \skeletalSpace_\level. \label{EQ:tilde_lambda}
\end{equation}

Next, we present an approximation result for $\tilde \lambda$. Its proof is similar to the one of Lemma 5.6 in \cite{LuRK21}, since $\liftingOp^\timeStep_\level$ has the same properties as $S_\level$ in \cite{LuRK21}.
\begin{lemma}\label{LEM:bound_flambda}
 We have that
 \begin{equation}
  \| \lambda - \tilde \lambda \|_{a_\level} \lesssim \sqrt{1 + \timeStep} h_\level \| A_\level \lambda \|_\level \qquad \text{ and } \qquad \| f _\lambda \|_0 \lesssim \| A_\level \lambda \|_\level. \label{EQ:bound_flambda}
 \end{equation}
\end{lemma}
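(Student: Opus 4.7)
The plan is to establish both estimates by leveraging the defining identity for $f_\lambda$ together with the $L^2$-approximation property of the lifting operator $\liftingOp^\timeStep_\level$; the two bounds are tightly coupled, so I would prove the $\|f_\lambda\|_0$ estimate first and then use it inside the $\|\lambda - \tildelambda\|_{a_\level}$ estimate.

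For the $L^2$-bound on $f_\lambda$, I would exploit that $f_\lambda \in \liftingOp^\timeStep_\level \skeletalSpace_\level$ by construction, so write $f_\lambda = \liftingOp^\timeStep_\level \nu$ for some $\nu \in \skeletalSpace_\level$ and test \eqref{EQ:scp_def} with $\mu = \nu$. This gives $\|\liftingOp^\timeStep_\level \nu\|_0^2 = \langle A_\level \lambda, \nu \rangle_\level \le \|A_\level \lambda\|_\level \, \|\nu\|_\level$, and the norm equivalence \eqref{EQ:norm_equiv} converts $\|\nu\|_\level$ back into $\|\liftingOp^\timeStep_\level \nu\|_0 = \|f_\lambda\|_0$, closing the inequality after division.

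For the first estimate, I would subtract the defining equations \eqref{EQ:scp_def} and \eqref{EQ:tilde_lambda} to obtain the consistency identity
\begin{equation*}
 a_\level(\lambda - \tildelambda, \mu) = (f_\lambda, \liftingOp^\timeStep_\level \mu - u^\timeStep_\level \mu) \qquad \forall \mu \in \skeletalSpace_\level.
\end{equation*}
The heart of the argument is a bound of the form $\|\liftingOp^\timeStep_\level \mu - u^\timeStep_\level \mu\|_0 \lesssim h_\level \sqrt{1+\timeStep}\,\|\mu\|_{a_\level}$. Because the first line of \eqref{EQ:lifting_def} says $u^\timeStep_\level \mu$ is exactly the element-wise $L^2$-projection of the continuous, piecewise polynomial function $\liftingOp^\timeStep_\level \mu$ onto $[\polynomials_p(\elem)]^3$, a standard Bramble--Hilbert/scaling argument yields $\|\liftingOp^\timeStep_\level \mu - u^\timeStep_\level \mu\|_0 \lesssim h_\level\, \|\nabla \liftingOp^\timeStep_\level \mu\|_0$, and then Lemma \ref{LEM:norm_equiv_s} upgrades the right-hand side to $h_\level \sqrt{1+\timeStep}\,\|\gradU^\timeStep_\level \mu\|_0 \le h_\level \sqrt{1+\timeStep}\,\|\mu\|_{a_\level}$. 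Setting $\mu = \lambda - \tildelambda$ in the consistency identity, applying Cauchy--Schwarz, and dividing by $\|\lambda - \tildelambda\|_{a_\level}$ leaves $\|\lambda - \tildelambda\|_{a_\level} \lesssim h_\level \sqrt{1+\timeStep}\,\|f_\lambda\|_0$, which combined with the already established bound on $\|f_\lambda\|_0$ gives the claim.

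The only subtle point is the approximation estimate for $\liftingOp^\timeStep_\level \mu - u^\timeStep_\level \mu$: one must verify that the $L^2$-projection error on each simplex scales like $h_\level$ times the $H^1$-seminorm uniformly in $p$ and uniformly across mesh levels, which works here because $\liftingOp^\timeStep_\level \mu$ lives in the fixed shape-regular space $\contElementSpace_{\level,p+4}$ and $u^\timeStep_\level \mu$ agrees with its projection onto $[\polynomials_p(\elem)]^3$. Everything else is a short duality-style chain that mirrors the proof of \cite[Lem.\ 5.6]{LuRK21}, with $\liftingOp^\timeStep_\level$ playing the role of the scalar lifting there.
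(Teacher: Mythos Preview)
Your proposal is correct and follows essentially the same route the paper has in mind: the paper does not spell out the argument but refers to \cite[Lem.\ 5.6]{LuRK21}, and that proof proceeds exactly as you describe --- first bounding $\|f_\lambda\|_0$ via the norm equivalence \eqref{EQ:norm_equiv}, then using the consistency identity $a_\level(\lambda-\tildelambda,\mu)=(f_\lambda,\liftingOp^\timeStep_\level\mu-u^\timeStep_\level\mu)$ together with the observation that $u^\timeStep_\level\mu=\Pi_\level\liftingOp^\timeStep_\level\mu$ (from the first line of \eqref{EQ:lifting_def}) and Lemma~\ref{LEM:norm_equiv_s}. Your remark about uniformity in $p$ is unnecessary here since $p$ is fixed, but otherwise the argument is complete.
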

\begin{lemma}\label{EQ:reconstruction_approximation}
 If the model problem has full elliptic regularity, and \eqref{EQ:ia1} and \eqref{EQ:ia2} hold, then for all $\lambda \in \skeletalSpace_\level$, there is a $\bar w \in \linElementSpace_{\level - 1}$ such that
 \begin{multline*}
  \| \gradU^\timeStep_\level \lambda - \nabla \bar w \|_0 + \tfrac{1}{\sqrt{\timeStep}} \| p^\timeStep_\level \lambda + \timeStep \Div \bar w \|_0 + \| \gradU^\timeStep_{\level - 1} \projectionOp_{\level - 1} \lambda - \nabla \bar w \|_0 \\
  + \tfrac{1}{\sqrt{\timeStep}} \| p^\timeStep_{\level - 1} \projectionOp_{\level - 1} \lambda + \timeStep \Div \bar w \|_0 \le \bar C h_\level \| A_\level \lambda \|_\level,
 \end{multline*}
 where $\bar C = C_1 ( C_\timeStep (1 + \timeStep) + C_2 (1 + \timeStep)^{3/2} )$, in which $C_1$ and $C_2$ are independent of the mesh size and $\timeStep$.
\end{lemma}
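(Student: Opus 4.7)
The plan is to construct $\bar w \in \linElementSpace_{\level-1}$ from the weak solution $\tilde u$ of the auxiliary problem \eqref{EQ:ap_flambda} with right-hand side $f_\lambda$, and then split each of the four summands on the left-hand side via a triangle inequality that passes through the appropriate continuous or discrete intermediate. First I would combine full elliptic regularity \eqref{EQ:regularity_est} with the second bound of Lemma~\ref{LEM:bound_flambda} to obtain $|\tilde u|_2 \lesssim C_\timeStep \|A_\level\lambda\|_\level$ and $|\tilde p|_1 \lesssim C_\timeStep \timeStep \|A_\level\lambda\|_\level$, and take $\bar w$ to be the standard nodal interpolant of $\tilde u$ into $\linElementSpace_{\level-1}$. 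Standard interpolation estimates, together with the refinement condition $h_{\level-1} \lesssim h_\level$, then yield $\|\nabla \tilde u - \nabla \bar w\|_0 \lesssim h_\level C_\timeStep \|A_\level\lambda\|_\level$; since $\tilde p = -\timeStep \Div \tilde u$ by \eqref{EQ:ap_flambda_3}, the same bound scaled by $\sqrt\timeStep$ also handles $\tfrac{1}{\sqrt\timeStep}\|\tilde p + \timeStep \Div \bar w\|_0$.

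For the two fine-level summands I would use
\[
 \|\gradU^\timeStep_\level \lambda - \nabla \bar w\|_0 \le \|\gradU^\timeStep_\level(\lambda - \tilde\lambda)\|_0 + \|\gradU^\timeStep_\level \tilde\lambda - \nabla \tilde u\|_0 + \|\nabla \tilde u - \nabla \bar w\|_0,
\]
and the obvious pressure analogue. The first summand is controlled by $\|\lambda - \tilde\lambda\|_{a_\level}$ via the definition of the energy norm and the first bound in Lemma~\ref{LEM:bound_flambda}. For the second, I would decompose the RT-H solution of \eqref{EQ:ap_flambda} by linearity of the local solvers as $\gradU^\timeStep_\level \tilde\lambda + \gradU^\timeStep_\level f_\lambda$, apply Theorem~\ref{TH:error_auxiliary} to the full discretisation error $\nabla \tilde u - (\gradU^\timeStep_\level \tilde\lambda + \gradU^\timeStep_\level f_\lambda)$, and absorb $\gradU^\timeStep_\level f_\lambda$ using Lemma~\ref{LEM:stability_local_f}; the pressure step is entirely parallel. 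The third summand was already handled above.

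For the two coarse-level summands I would introduce $\tilde\lambda_{\level-1} \in \skeletalSpace_{\level-1}$, the RT-H solution of the \emph{same} auxiliary problem on level $\level-1$, characterised by $a_{\level-1}(\tilde\lambda_{\level-1}, \mu) = (f_\lambda, u^\timeStep_{\level-1}\mu)$ for all $\mu \in \skeletalSpace_{\level-1}$, and split
\[
 \|\gradU^\timeStep_{\level-1}\projectionOp_{\level-1}\lambda - \nabla \bar w\|_0 \le \|\gradU^\timeStep_{\level-1}(\projectionOp_{\level-1}\lambda - \tilde\lambda_{\level-1})\|_0 + \|\gradU^\timeStep_{\level-1}\tilde\lambda_{\level-1} - \nabla \tilde u\|_0 + \|\nabla \tilde u - \nabla \bar w\|_0,
\]
together with the pressure analogue. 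The second and third summands are handled exactly as in the fine-level argument, now on level $\level-1$ and using $h_{\level-1}\lesssim h_\level$.

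The technical heart of the proof, and the main obstacle, is the bound on $\|\projectionOp_{\level-1}\lambda - \tilde\lambda_{\level-1}\|_{a_{\level-1}}$, since via the definition of the energy norm it controls both remaining first summands simultaneously. Subtracting the defining identities of $\projectionOp_{\level-1}\lambda$ and $\tilde\lambda_{\level-1}$ and using \eqref{EQ:scp_def} yields
\[
 a_{\level-1}(\projectionOp_{\level-1}\lambda - \tilde\lambda_{\level-1}, \mu) = (f_\lambda, \liftingOp^\timeStep_\level \injectionOp_\level \mu - u^\timeStep_{\level-1}\mu) \qquad \forall \mu \in \skeletalSpace_{\level-1}.
\]
Testing with $\mu = \projectionOp_{\level-1}\lambda - \tilde\lambda_{\level-1}$, applying Cauchy--Schwarz, and inserting $\pm u^\timeStep_\level \injectionOp_\level \mu$ splits the right-hand-side norm into $\|u^\timeStep_\level \injectionOp_\level \mu - u^\timeStep_{\level-1}\mu\|_0$, which is exactly \eqref{EQ:conv_u}, and $\|\liftingOp^\timeStep_\level \injectionOp_\level \mu - u^\timeStep_\level \injectionOp_\level \mu\|_0$. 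For the latter I observe that by definition \eqref{EQ:lifting_def} $u^\timeStep_\level \nu$ is the element-wise $L^2$ projection of $\liftingOp^\timeStep_\level \nu$ onto $[\polynomials_p(\elem)]^3$, so a standard scaling argument together with Lemma~\ref{LEM:norm_equiv_s} gives $\|\liftingOp^\timeStep_\level \nu - u^\timeStep_\level \nu\|_0 \lesssim h_\level \sqrt{1+\timeStep}\|\gradU^\timeStep_\level \nu\|_0$. Combining this with \eqref{EQ:conv_l} produces the desired estimate $\|\projectionOp_{\level-1}\lambda - \tilde\lambda_{\level-1}\|_{a_{\level-1}} \lesssim (1+\timeStep)^{3/2} h_\level \|A_\level\lambda\|_\level$, which supplies the $(1+\timeStep)^{3/2}$ contribution to $\bar C$, while the triangle inequalities on both levels account for the $C_\timeStep(1+\timeStep)$ part.
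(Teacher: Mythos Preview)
Your proposal is correct and follows essentially the same route as the paper: choose $\bar w$ from the continuous solution $\tilde u$ of the auxiliary problem, split each summand through $\tilde\lambda$ (resp.\ $\tilde\lambda_{\level-1}$) and then through $\tilde\gradU,\tilde p$, and identify the central difficulty as the bound on $\|\projectionOp_{\level-1}\lambda-\tilde\lambda_{\level-1}\|_{a_{\level-1}}$, which you treat via the very same error representation $a_{\level-1}(e_{\level-1},\mu)=(f_\lambda,\liftingOp^\timeStep_\level\injectionOp_\level\mu-u^\timeStep_{\level-1}\mu)$ and the same splitting through $u^\timeStep_\ell\injectionOp_\ell\mu$ that the paper uses. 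The only cosmetic difference is that the paper takes $\bar w=\contLinProj_{\level-1}\tilde u$ (the $L^2$ projection) whereas you use the nodal interpolant; both give the required $H^1$ approximation estimate.
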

\begin{proof}
 We only prove the inequality with respect to the third and forth terms, since the first and second terms can be treated analogously. Setting $e_{\level - 1} = \projectionOp_{\level - 1} \lambda - \tilde \lambda_{\level - 1}$, we have
 \begin{align}
  \| \gradU^\timeStep_{\level - 1} & \projectionOp_{\level - 1} \lambda - \nabla \bar w \|_0 + \tfrac{1}{\sqrt{\timeStep}} \| p^\timeStep_{\level - 1} \projectionOp_{\level - 1} \lambda + \timeStep \Div \bar w \|_0 \\
  \le & \| \gradU^\timeStep_{\level - 1} e_{\level - 1} \|_0 + \| \gradU^\timeStep_{\level - 1} \tilde \lambda_{\level - 1} - \nabla \bar w \|_0 \\
  & + \tfrac{1}{\sqrt{\timeStep}} \| p^\timeStep_{\level - 1} e_{\level - 1} \|_0 + \tfrac{1}{\sqrt{\timeStep}} \| p^\timeStep_{\level - 1} \tilde \lambda_{\level - 1} + \timeStep \Div \bar w \|_0 \\
  \le & \underbrace{ \| e_{\level - 1} \|_{a_{\level - 1}} }_{ =: \Xi_1 } + \underbrace{ \| \gradU^\timeStep_{\level - 1} \tilde \lambda_{\level - 1} - \nabla \bar w \|_0 }_{ =: \Xi_2 } + \underbrace{ \tfrac{1}{\sqrt{\timeStep}} \| p^\timeStep_{\level - 1} \tilde \lambda_{\level - 1} + \timeStep \Div \bar w \|_0 }_{ =: \Xi_3 }.
 \end{align}
 Next, we estimate the three individual terms
 \begin{align}
  \Xi^2_1 =~& a_{\level - 1}(e_{\level - 1}, e_{\level - 1}) = a_\level ( \lambda, \injectionOp_\level e_{\level - 1}) - a_{\level - 1}(\tilde \lambda_{\level - 1}, e_{\level - 1}) \\
  \overset{\eqref{EQ:scp_def}}{\underset{\eqref{EQ:tilde_lambda}}=~} & (f_\lambda, \liftingOp^\timeStep_\level \injectionOp_\level e_{\level - 1}) - (f_\lambda, u^\timeStep_{\level - 1} e_{\level - 1}) \\
  \le~ & \| f_\lambda \|_0 \| \liftingOp^\timeStep_\level \injectionOp_\level e_{\level - 1} - u^\timeStep_{\level - 1} e_{\level - 1} \|_0.
 \end{align}
 Note that
 \begin{align}
  \| \liftingOp^\timeStep_\level & \injectionOp_\level e_{\level - 1} - u^\timeStep_{\level - 1} e_{\level - 1} \|_0 \\
  & \le \| \liftingOp^\timeStep_\level \injectionOp_\level e_{\level - 1} - u^\timeStep_{\level} \injectionOp_\level e_{\level - 1} \|_0 + \| u^\timeStep_{\level} \injectionOp_\level e_{\level - 1} - u^\timeStep_{\level - 1} e_{\level - 1} \|_0 \\
  & = \| \liftingOp^\timeStep_\level \injectionOp_\level e_{\level - 1} - \Pi_\level \liftingOp^\timeStep_{\level} \injectionOp_\level e_{\level - 1} \|_0 + \| u^\timeStep_{\level} \injectionOp_\level e_{\level - 1} - u^\timeStep_{\level - 1} e_{\level - 1} \|_0 \\
  & \overset{\eqref{EQ:conv_u}}\lesssim h_\level \| \nabla \liftingOp^\timeStep_\level \injectionOp_\level e_{\level - 1} \|_0 + h_\level (1 + \timeStep) \| \gradU^\timeStep_{\level - 1} e_{\level - 1} \|_0 \\
  & \overset{Lemma\ \ref{LEM:norm_equiv_s}}{\underset{\eqref{EQ:conv_l}}\lesssim} h_\level (1 + \timeStep)^{3/2} \| \gradU^\timeStep_{\level - 1} e_{\level - 1} \|_0 \lesssim h_\level (1 + \timeStep)^{3/2} \Xi_1,
 \end{align}
 which implies that $\Xi_1 \lesssim (1 + \timeStep)^{3/2} h_\level \| A_\level \lambda \|_\level$.

 Next, taking $\bar w = \contLinProj_{\level - 1} \tilde u$ to be the $L^2$ projection of $\tilde u$ to $\linElementSpace_{\level - 1}$, and assuming that $(\gradU^\timeStep_{\level - 1}, \, u^\timeStep_{\level - 1}, \, p^\timeStep_{\level - 1})$ is the RT-H approximation of \eqref{EQ:ap_flambda}, we have that $\gradU^\timeStep_{\level - 1} = \gradU^\timeStep_{\level - 1} \tilde \lambda_{\level - 1} + \gradU^\timeStep_{\level - 1} f_\lambda$, $p^\timeStep_{\level - 1} = p^\timeStep_{\level - 1} \tilde \lambda_{\level - 1} + p^\timeStep_{\level - 1} f_\lambda$.
 \begin{align}
  \Xi_2 & \overset{\eqref{EQ:ap_flambda_1}}\le \| \gradU^\timeStep_{\level - 1} \tilde \lambda_{\level - 1} - \tilde \gradU \|_0 + \| \nabla \tilde u - \nabla \contLinProj_{\level - 1} \tilde u \|_0 \\
  & \le \| \gradU^\timeStep_{\level - 1} - \tilde \gradU \|_0 + \| \gradU^\timeStep_{\level - 1} f_\lambda \|_0 + \| \nabla \tilde u - \nabla \contLinProj_{\level - 1} \tilde u \|_0 \\&\lesssim(1 + \timeStep) C_\timeStep h_\level \| f_\lambda \|_0 + h_\level \| f_\lambda \|_0,
 \end{align}
 where the last inequality is a consequence of Theorem \ref{TH:error_auxiliary}, \eqref{EQ:stability_l_f}, and \eqref{EQ:regularity_est}.
 \begin{align}
  \Xi_3 & \le \tfrac{1}{\sqrt{\timeStep}} \left( \| p^\timeStep_{\level -1} \tilde \lambda_{\level -1} - \tilde p \|_0 + \timeStep \| \Div \tilde u - \Div \bar w \|_0 \right) \\
  & \lesssim \tfrac{1}{\sqrt{\timeStep}} \left( \| p^\timeStep_{\level - 1} -  \tilde p \|_0 + \|p^\timeStep_{\level - 1} f_\lambda \|_0 + \timeStep h_\level | \tilde u |_{2,\Omega} \right) \\
  & \lesssim (1 + \timeStep) C_\timeStep h_\level \| f_\lambda \|_0 + h_\level \| f_\lambda \|_0,
 \end{align}
 where the  last inequality is a combination of Theorem \ref{TH:error_auxiliary}, \eqref{EQ:error_a}, \eqref{EQ:stability_p_f}, and \eqref{EQ:regularity_est}.

 The combination of the estimates of $\Xi_1$, $\Xi_2$, and $\Xi_3$ with Lemma \ref{LEM:bound_flambda} finishes the proof.
\end{proof}

Using the aforementioned results, we can prove \eqref{EQ:a1} provided \eqref{EQ:ia1} and \eqref{EQ:ia2} hold.
\begin{theorem}
 \eqref{EQ:a1} holds.
\end{theorem}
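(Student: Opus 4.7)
The plan is to exploit the identity
\begin{equation*}
 a_\level(\lambda - \injectionOp_\level \projectionOp_{\level - 1}\lambda, \lambda) = a_\level(\lambda, \lambda) - a_{\level - 1}(\projectionOp_{\level - 1}\lambda, \projectionOp_{\level - 1}\lambda),
\end{equation*}
which follows from symmetry of $a_\level$ together with the defining relation $a_{\level - 1}(\projectionOp_{\level - 1}\lambda, \mu) = a_\level(\lambda, \injectionOp_\level \mu)$, taken with $\mu = \projectionOp_{\level - 1}\lambda$. Since the stabilization term in \eqref{EQ:bilinear} vanishes for RT-H, the right-hand side reduces to $\|\gradU^\timeStep_\level \lambda\|_0^2 - \|\gradU^\timeStep_{\level - 1}\projectionOp_{\level - 1}\lambda\|_0^2 + \tfrac{1}{\timeStep}\bigl(\|p^\timeStep_\level \lambda\|_0^2 - \|p^\timeStep_{\level - 1}\projectionOp_{\level - 1}\lambda\|_0^2\bigr)$, and factoring via $\|a\|_0^2 - \|b\|_0^2 = (a + b, a - b)$ produces two inner products whose second arguments are the differences $\gradU^\timeStep_\level \lambda - \gradU^\timeStep_{\level - 1}\projectionOp_{\level - 1}\lambda$ and $p^\timeStep_\level \lambda - p^\timeStep_{\level - 1}\projectionOp_{\level - 1}\lambda$, precisely the quantities appearing in Lemma \ref{LEM:quasi_orthogonality}.

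Next, I would invoke Lemma \ref{EQ:reconstruction_approximation} to produce $\bar w \in \linElementSpace_{\level - 1}$ such that $\gradU^\timeStep_\level \lambda$ and $\gradU^\timeStep_{\level - 1}\projectionOp_{\level - 1}\lambda$ both lie within $\bar C\, h_\level \|A_\level \lambda\|_\level$ of $\nabla \bar w$ in $L^2$, while $p^\timeStep_\level \lambda$ and $p^\timeStep_{\level - 1}\projectionOp_{\level - 1}\lambda$ both lie within $\bar C \sqrt{\timeStep}\, h_\level \|A_\level \lambda\|_\level$ of $-\timeStep \Div \bar w$. Rewriting each sum factor as $2 \nabla \bar w$, respectively $-2\timeStep \Div \bar w$, plus two reconstruction residuals, splits each inner product into a main piece coupled to $\bar w$ and four cross-terms. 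The main piece collapses exactly to $2(\gradU^\timeStep_\level \lambda - \gradU^\timeStep_{\level - 1}\projectionOp_{\level - 1}\lambda, \nabla \bar w) - 2(p^\timeStep_\level \lambda - p^\timeStep_{\level - 1}\projectionOp_{\level - 1}\lambda, \Div \bar w)$---the $\tfrac{1}{\timeStep}$ prefactor meets $-2\timeStep \Div \bar w$ so the $\timeStep$ factors cancel---and vanishes by Lemma \ref{LEM:quasi_orthogonality} applied with $w = 2\bar w$.

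It remains to bound the cross-terms by Cauchy-Schwarz. Using the triangle inequality together with the reconstruction bounds gives $\|\gradU^\timeStep_\level \lambda - \gradU^\timeStep_{\level - 1}\projectionOp_{\level - 1}\lambda\|_0 \lesssim h_\level \|A_\level \lambda\|_\level$ and $\|p^\timeStep_\level \lambda - p^\timeStep_{\level - 1}\projectionOp_{\level - 1}\lambda\|_0 \lesssim \sqrt{\timeStep}\, h_\level \|A_\level \lambda\|_\level$, so each velocity cross-term is $\lesssim h_\level^2 \|A_\level \lambda\|_\level^2$ and each pressure cross-term (carrying the $\tfrac{1}{\timeStep}$ weight) contributes $\tfrac{1}{\timeStep}(\sqrt{\timeStep}\, h_\level \|A_\level \lambda\|_\level)^2 = h_\level^2 \|A_\level \lambda\|_\level^2$. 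Summing, $|a_\level(\lambda - \injectionOp_\level \projectionOp_{\level - 1}\lambda, \lambda)| \lesssim h_\level^2 \|A_\level \lambda\|_\level^2$; inserting the upper bound $\underline \lambda^A_\level \lesssim (1 + \timeStep) h_\level^{-2}$ from Lemma \ref{LEM:eigenvalue_bound} yields $h_\level^2 \lesssim (1 + \timeStep)/\underline \lambda^A_\level$ and hence \eqref{EQ:a1} with $C_1$ depending only on the fixed $\timeStep$.

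The main obstacle I anticipate is the careful bookkeeping of the $\timeStep$-weights throughout. The $\tfrac{1}{\timeStep}$ prefactor carried by the pressure contribution in $a_\level$ must pair exactly with the $\timeStep$ in the ansatz $-\timeStep \Div \bar w$ so that the quasi-orthogonality cancellation applies cleanly, which forces the precise choice $w = 2\bar w$; and the pressure cross-terms are only bounded by $h_\level^2 \|A_\level \lambda\|_\level^2$ because the $\tfrac{1}{\timeStep}$ is absorbed exactly by $(\sqrt{\timeStep})^2$ from the pressure side of Lemma \ref{EQ:reconstruction_approximation}. These two matched scalings are what ultimately guarantee that $C_1$ is $\level$-independent.
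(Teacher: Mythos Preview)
Your proposal is correct and follows essentially the same route as the paper: the identity $a_\level(\lambda - \injectionOp_\level \projectionOp_{\level-1}\lambda,\lambda) = a_\level(\lambda,\lambda) - a_{\level-1}(\projectionOp_{\level-1}\lambda,\projectionOp_{\level-1}\lambda)$, the binomial factorization, the insertion of $\bar w$ from Lemma~\ref{EQ:reconstruction_approximation}, the cancellation of the $\nabla\bar w$ and $\Div\bar w$ contributions via Lemma~\ref{LEM:quasi_orthogonality}, and the final use of Lemma~\ref{LEM:eigenvalue_bound} to convert $h_\level^2$ into $1/\underline\lambda^A_\level$ are exactly the ingredients the paper uses. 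The only cosmetic difference is that after subtracting the $\bar w$-terms the paper applies the binomial identity a second time to obtain directly
\[
\|\gradU^\timeStep_\level\lambda-\nabla\bar w\|_0^2 - \|\gradU^\timeStep_{\level-1}\projectionOp_{\level-1}\lambda-\nabla\bar w\|_0^2 + \tfrac{1}{\timeStep}\bigl(\|p^\timeStep_\level\lambda+\timeStep\Div\bar w\|_0^2 - \|p^\timeStep_{\level-1}\projectionOp_{\level-1}\lambda+\timeStep\Div\bar w\|_0^2\bigr),
\]
whereas you expand the sum factor into $2\nabla\bar w$ plus two residuals and bound the resulting cross-terms by Cauchy--Schwarz together with a triangle-inequality estimate on the difference; both manipulations yield the same $\bar C^2 h_\level^2\|A_\level\lambda\|_\level^2$ bound.
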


\begin{proof}
 First, we prove the inequality
 \begin{equation}
  | a_\level (\lambda - \injectionOp_\level \projectionOp_{\level - 1}) \lambda, \lambda) | \le \bar C^2 h^2_\level \| A_\level \lambda \|^2_\level
 \end{equation}
 using binomial factorization and Lemma \ref{LEM:quasi_orthogonality}, we have
 \begin{align}
  a_\level (\lambda - & \injectionOp_\level \projectionOp_{\level - 1} \lambda, \lambda) = a_\level(\lambda, \lambda) - a_{\level - 1 } (\projectionOp_{\level - 1} \lambda, \projectionOp_{\level - 1} \lambda) \\
  =~ & (\gradU^\timeStep_\level \lambda, \gradU^\timeStep_\level \lambda) + \tfrac{1}{\timeStep} (p^\timeStep_\level \lambda, p^\timeStep_\level \lambda) \\
  & - (\gradU^\timeStep_{\level-1} \projectionOp_{\level - 1} \lambda, \gradU^\timeStep_{\level -1}\projectionOp_{\level - 1} \lambda) - \tfrac{1}{\timeStep} (p^\timeStep_{\level-1} \projectionOp_{\level - 1} \lambda, p^\timeStep_{\level-1} \projectionOp_{\level - 1} \lambda) \\
  =~ & (\gradU^\timeStep_\level \lambda + \gradU^\timeStep_{\level-1} \projectionOp_{\level - 1} \lambda, \gradU^\timeStep_\level \lambda - \gradU^\timeStep_{\level-1} \projectionOp_{\level - 1} \lambda) \\
  & + \tfrac{1}{\timeStep} (p^\timeStep_\level \lambda + p^\timeStep_{\level-1}\projectionOp_{\level - 1} \lambda, p^\timeStep_\level \lambda - p^\timeStep_{\level-1} \projectionOp_{\level - 1} \lambda) \\
  =~ & (\gradU^\timeStep_\level \lambda + \gradU^\timeStep_{\level-1} \projectionOp_{\level - 1} \lambda - 2 \nabla \bar w, \gradU^\timeStep_\level \lambda - \gradU^\timeStep_{\level-1} \projectionOp_{\level - 1} \lambda + \nabla \bar w -\nabla \bar w) \\
  & + \tfrac{1}{\timeStep} (p^\timeStep_\level \lambda + p^\timeStep_{\level-1} \projectionOp_{\level - 1} \lambda +2 \timeStep \Div \bar w, \\
  & \qquad \qquad p^\timeStep_\level - p^\timeStep_{\level-1} \projectionOp_{\level - 1} \lambda + \timeStep \Div \bar w - \timeStep \Div \bar w),
 \end{align}
 where $\bar w \in \linElementSpace_{\level - 1}$.
Then using Lemma \ref{EQ:reconstruction_approximation} and binomial factorization again,
 \begin{align}
  | a_\level(\lambda & - \injectionOp_\level \projectionOp_{\level - 1} \lambda, \lambda) | \\
  =~ & \| \gradU^\timeStep_\level \lambda - \nabla \bar w \|^2_0 - \| \gradU^\timeStep_{\level-1} \projectionOp_{\level - 1} \lambda - \nabla \bar w \|^2_0 \\
  & + \tfrac{1}{\timeStep} \left( \|p^\timeStep_\level \lambda + \timeStep \Div \bar w \|^2_0 - \| p^\timeStep_{\level-1} \projectionOp_{\level - 1} \lambda + \timeStep \Div \bar w \|^2_0 \right) \\
  \le~ & \| \gradU^\timeStep_\level \lambda - \nabla \bar w \|^2_0 + \tfrac{1}{\timeStep} \| p^\timeStep_\level \lambda + \timeStep \Div \bar w \|^2_0 \\
  &+\| \gradU^\timeStep_{\level-1} \projectionOp_{\level - 1} \lambda - \nabla \bar w \|^2_0+ \tfrac{1}{\timeStep} \| p^\timeStep_{\level-1} \projectionOp_{\level - 1} \lambda + \timeStep \Div \bar w \|^2_0\\
  \le~ & \bar C^2 h^2_\level \| A_\level \lambda \|^2_\level.
 \end{align}
 Thus, according to Lemma \ref{LEM:eigenvalue_bound}, \eqref{EQ:a1} holds with $C_1 = \bar C^2 (1 + \timeStep)$
\end{proof}
% 
% --------------------------------------------------------------------------------------------------
\section{Numerical experiments}
% --------------------------------------------------------------------------------------------------
%
\begin{figure}
 \begin{tikzpicture}
  \draw[dashed] (0,0) -- (4,0);
  \draw (0,4) -- (4,0) -- (4,4) -- (0,0) -- (0,4) -- (4,4);
  \draw (2,0) -- (4,2) -- (2,4) -- (0,2) -- (2,0) -- (2,4);
  \draw (0,2) -- (4,2);
 \end{tikzpicture}
 \caption{Initial mesh for numerical experiments. The Neumann boundary $\Gamma_\textup N$ is dashed.}\label{FIG:initial_mesh}
\end{figure}
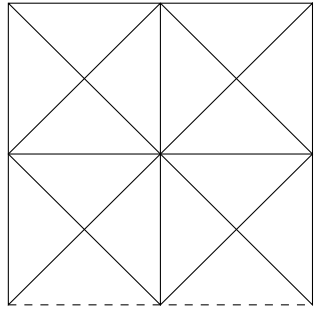
We consider the following Stokes problem on the unit square $\Omega = (0,1)^2$
\begin{subequations}\label{EQ:numerics}
\begin{align}
 - \Delta u + \nabla p & = f && \text{ in } \Omega,\\
 \nabla \cdot u & = 0 && \text{ in } \Omega,\\
 u & = u_\textup D && \text{ on } \Gamma_\textup D,\\
 (\nabla u + p I) \cdot \Nu & = g && \text{ on } \Gamma_\textup N,
\end{align}
\end{subequations}
where $\Gamma_\textup N = \{ x \in \partial \Omega\colon x_2 = 0\}$ and $\Gamma_\textup D = \partial \Omega \setminus \Gamma_\textup N$. We select $f$, $u_\textup D$ and $g$ such that
\begin{equation*}
 u = \begin{pmatrix} \sin(\pi x_1) \sin(\pi x_2) \\ \cos(\pi x_1) \cos(\pi x_2) \end{pmatrix} \qquad \text{ and } \qquad p = \sin(\pi x_1) \cos(\pi x_2)
\end{equation*}
solve \eqref{EQ:numerics}. For the method, the initial mesh is shown in Figure \ref{FIG:initial_mesh}, and  we set $\tau^\star_\level = 1$. Denoting the system of equations arising from \eqref{EQ:hdg_condensed} by $A \vec x = \vec b$ we choose the injection operator to be $\injectionOp^1_\level$ from \cite{LuRK21}. The iterative procedure for solving $A \vec x = \vec b$ is stopped if
\begin{equation*}
 \frac{\| A \vec x - \vec b \|_2}{\| \vec b \|_2} < \rho.
\end{equation*}

Importantly, we set $\epsilon_\textup{tol} = 10^{-8}$ in \eqref{EQ:sol_accuracy} and $\rho = 10^{-10}$ for linear (HDGP1) and quadratic (HDGP2) test and trial functions. If cubic (HDGP3) test and trial functions are used, we set $\epsilon_\textup{tol} = 10^{-10}$ and $\rho = 10^{-12}$.

First, we evaluate the depence of $n_\textup{iter}$ on the mesh level and the time-step size $\Delta t$ in Table \ref{TAB:n_iter}.
\begin{table}
 \begin{tabular}{c|ccccc|ccccc}
  \toprule
  mesh level & 1 & 2 & 3 & 4 & 5 & 1 & 2 & 3 & 4 & 5 \\
  \midrule
  $\Delta t = 2$ & 60 & 61 & 62 & 62 & 62 & 60 & 50 & 41 & 31 & 22 \\
  $\Delta t = 4$ & 35 & 36 & 36 & 36 & 36 & 35 & 30 & 25 & 19 & 14 \\
  $\Delta t = 8$ & 22 & 22 & 23 & 23 & 23 & 22 & 19 & 16 & 13 &  9 \\
  \bottomrule
 \end{tabular}\vspace{1ex}
 \caption{Necessary number of iterations $n_\textup{iter}$ for HDGP1 if $p^0_\level = 0$ (left) and if $p^0_1 = 0$, $p^0_\level = p^{n_\textup{iter}}_{\level-1}$ ($\level > 1)$.}\label{TAB:n_iter}
\end{table}
We can clearly see that $n_\textup{iter}$ decreases with increasing $\Delta t$ and when the results of the coarser mesh is used to initialize $p^0_\level$. In the remainder of this section, we use $p^0_1 = 0$ and $p^0_\level = p^{n_\textup{iter}}_{\level-1}$ if $\level > 1$.

\begin{table}
 \begin{tabular}{c|@{\,}lcc@{\,}lcc@{\,}lcc@{\,}lcc@{\,}lcc}
  \toprule
  mesh level  && \multicolumn{2}{c}{2}  && \multicolumn{2}{c}{3}   && \multicolumn{2}{c}{4}   && \multicolumn{2}{c}{5}    && \multicolumn{2}{c}{6} \\
  \# DoFs               && \multicolumn{2}{c}{368} && \multicolumn{2}{c}{1504} && \multicolumn{2}{c}{6080} && \multicolumn{2}{c}{24448} && \multicolumn{2}{c}{98048}  \\
  \cmidrule{3-4} \cmidrule{6-7} \cmidrule{9-10} \cmidrule{12-13} \cmidrule{15-16}
  smoother    && 2  & 4  && 2  & 4  && 2  & 4  && 2  & 4  && 2  & 4  \\
  \midrule
  $\Delta t = 2$ && 32 & 19 && 32 & 17 && 31 & 17 && 31 & 17 && 31 & 16 \\
  $\Delta t = 4$ && 43 & 25 && 46 & 24 && 45 & 24 && 44 & 24 && 43 & 23 \\
  $\Delta t = 8$ && 61 & 34 && 71 & 37 && 72 & 37 && -- & 37 && -- & 37 \\
  \bottomrule
 \end{tabular}\vspace{1ex}
 \caption{Numbers of iterations with two and four smoothing steps for HDGP1. The -- means that the iteration number has surpassed 100.}\label{TAB:steps_p1}
\end{table}
Table \ref{TAB:steps_p1} shows the iteration numbers with two and four smoothing steps for \eqref{EQ:hdg_condensed} and HDGP1. The iteration numbers for \eqref{EQ:sol_accuracy} are those of Table \ref{TAB:n_iter}. We clearly observe that the numbers of iterations increase with $\Delta t$, which coincides with our analysis. If we use four smoother steps, the iteration number is much smaller and more stable than for two smoothing steps. Thus, we restrict ourselves to four smoothing steps in the remainder of this section.

Table \ref{TAB:steps_p23} shows the numbers of iterations for \eqref{EQ:sol_accuracy} (columns A) and  \eqref{EQ:hdg_condensed} (columns B) with different choice of $\Delta t$ for HDGP2 and HDGP3 respectively. We can see that 
 the numbers in columns B increase with $\Delta t$, while for fixed $\Delta t$, the iteration steps are stable, which coincides with our analysis. 

 Let the estimated orders of convergence
(EOC) of the approximate to $u$ be evaluated by the formula
\begin{equation}
 \text{EOC} = \log \left( \frac{ \| u - u_{\level-1} \|_{L^2(\Omega)} }{ \| u - u_\level \|_{L^2(\Omega)} } \right) / \log (2).
\end{equation}
We illustrate the EOC of the discrete unknowns $p$, $u$ and $L$ for HDGP1, HDGP2 and HDGP3 respectively in Table \ref{TAB:eoc}.
These discrete unknowns of (\ref{EQ:stokes_mixed}) are computed using the augmented Lagrangian approach and the multigrid method
proposed in this paper, we can observe that the convergence order is as expected.
\begin{table}
 \begin{tabular}{cc|ccccc}
  \toprule
  \multicolumn{2}{c}{mesh level} & 2 & 3 & 4 & 5 & 6 \\
  \midrule
  \multirow{3}{*}{\rotatebox[origin=c]{90}{HDGP1}} 
  & $p$ & 2.35 & 2.23 & 2.12 & 2.06 & 2.03 \\
  & $u$ & 1.96 & 2.00 & 2.00 & 2.00 & 2.00 \\
  & $L$ & 1.92 & 1.97 & 1.99 & 2.00 & 2.00 \\
  \midrule
  \multirow{3}{*}{\rotatebox[origin=c]{90}{HDGP2}} 
  & $p$ & 3.50 & 3.39 & 3.25 & 3.15 & 3.08 \\
  & $u$ & 2.97 & 2.99 & 3.00 & 3.00 & 3.00 \\
  & $L$ & 2.95 & 2.98 & 3.00 & 3.00 & 3.00 \\
  \midrule
  \multirow{3}{*}{\rotatebox[origin=c]{90}{HDGP3}}
  & $p$ & 4.22 & 4.12 & 4.06 & 4.03 & 4.00 \\
  & $u$ & 3.97 & 3.99 & 4.00 & 4.00 & 4.00 \\
  & $L$ & 3.95 & 3.98 & 3.99 & 4.00 & 4.00 \\
  \bottomrule
 \end{tabular}\vspace{1ex}
 \caption{Estimated orders of convergence for the different unknowns and different orers of approximatin spaces.}\label{TAB:eoc}
\end{table}

\begin{table}
 \begin{tabular}{cc|@{\,}lcc@{\,}lcc@{\,}lcc@{\,}lcc@{\,}lcc}
  \toprule
  \multicolumn{2}{c}{mesh level}  && \multicolumn{2}{c}{2}  && \multicolumn{2}{c}{3}   && \multicolumn{2}{c}{4}   && \multicolumn{2}{c}{5}    && \multicolumn{2}{c}{6} \\
  \cmidrule{4-5} \cmidrule{7-8} \cmidrule{10-11} \cmidrule{13-14} \cmidrule{16-17}
  \multicolumn{2}{c}{equation}  && A & B  && A & B  && A & B  && A & B  && A & B \\
  \midrule
  \multirow{4}{*}{\rotatebox[origin=c]{90}{HDGP2}}
  & \# DoFs   && \multicolumn{2}{c}{552} && \multicolumn{2}{c}{2256} && \multicolumn{2}{c}{9120} && \multicolumn{2}{c}{36672} && \multicolumn{2}{c}{147072}  \\
  & $\Delta t = 2$ && 36 & 10 && 28 & 10 && 21 & 10 && 13 & 10 &&  7 & 10 \\
  & $\Delta t = 4$ && 22 & 13 && 18 & 14 && 13 & 14 &&  9 & 14 &&  6 & 15 \\
  & $\Delta t = 8$ && 14 & 17 && 12 & 24 &&  9 & 31 &&  7 & 34 &&  5 & 34 \\
  \midrule
  \multirow{4}{*}{\rotatebox[origin=c]{90}{HDGP3}}
  & \# DoFs   && \multicolumn{2}{c}{736} && \multicolumn{2}{c}{3008} && \multicolumn{2}{c}{12160} && \multicolumn{2}{c}{48896} && \multicolumn{2}{c}{196096}  \\
  & $\Delta t = 2$ && 43 & 16 && 29 & 17 && 17 & 17 && 10 & 17 &&  6 & 17 \\
  & $\Delta t = 4$ && 26 & 20 && 18 & 21 && 11 & 22 &&  7 & 22 &&  5 & 22 \\
  & $\Delta t = 8$ && 17 & 25 && 12 & 30 &&  8 & 32 &&  6 & 33 &&  4 & 33 \\
  \bottomrule
 \end{tabular}\vspace{1ex}
 \caption{Numbers of iterations for \eqref{EQ:sol_accuracy} (columns A) and  \eqref{EQ:hdg_condensed} (columns B) for HDGP2 and HDGP3.}\label{TAB:steps_p23}
\end{table}
\section{Conclusions}
We have devised and analyzed mutligrid methods for HDG discretizations of the Stokes problem. Our analysis relied on relations among the SFH, the RT-H, and the BDM-H discretizations and heavily exploited the augmented Lagrangian approach. Numerical findings illuminated that our estimates appear to be sharp. In future research endeavours, we will devise multigrid schemes that do not rely on the Lagrangian approach.
% 
% --------------------------------------------------------------------------------------------------
\section*{Acknowledgements}
% --------------------------------------------------------------------------------------------------
% 
This work is supported by the German Research Foundation under Germany's Excellence Strategy EXC 2181/1 - 390900948 (the Heidelberg STRUCTURES Excellence Cluster) and by the Academy of Finland's grant number 350101 \emph{Mathematical models and numerical methods for water management in soils}. P.~Lu has been supported by the Alexander von Humboldt Foundation.
\bibliographystyle{ARalpha}
\bibliography{MultigridStokes}
\end{document}